\DeclareFontFamily{OT1}{pzc}{}
\DeclareFontShape{OT1}{pzc}{m}{it}{<-> s * [1.10] pzcmi7t}{}
\DeclareMathAlphabet{\mathpzc}{OT1}{pzc}{m}{it}
\newcommand\restr[2]{{
  \left.\kern-\nulldelimiterspace 
  #1 
  \vphantom{\big|} 
  \right|_{#2} 
  }}
\renewcommand{\d}[1]{\ensuremath{\operatorname{d}\!{#1}}}
\newtheorem{theorem}{Theorem}
[section]
\theoremstyle{definition}
\newtheorem{defn}{Definition}[section]
\theoremstyle{plain}
\newtheorem{corollary}[theorem]{Corollary}
\newtheorem{lemma}[theorem]{Lemma}
\newtheorem{proposition}[theorem]{Proposition}
\theoremstyle{remark}
\newtheorem{remark}[theorem]{Remark}
\theoremstyle{definition}
\tikzset{join/.code=\tikzset{after node path={%
\ifx\tikzchainprevious\pgfutil@empty\else(\tikzchainprevious)%
edge[every join]#1(\tikzchaincurrent)\fi}}}
\tikzset{>=stealth',every on chain/.append style={join},
         every join/.style={->}}
  \renewcommand{\contentsname}%
    {Table of Contents}%
\numberwithin{equation}{section}
\begin{document}
\setsecnumdepth{subsection}
\setsecnumdepth{subsubsection}
\pagenumbering{gobble}
\title{\textbf{Training Neural Networks Using Reproducing Kernel Space Interpolation and Model Reduction}}
\vspace*{3\baselineskip} 
\centerline{\textbf{Training Neural Networks Using Reproducing Kernel Space Interpolation and Model Reduction}}
\vspace*{1\baselineskip}
\centerline{A Dissertation presented} 
\vspace*{1\baselineskip}
\centerline{by}
\vspace*{1\baselineskip} 
\centerline{\textbf{Eric Arthur Werneburg}} 
\vspace*{1\baselineskip}
\centerline{to}
\vspace*{1\baselineskip}
\centerline{The Graduate School} 
\vspace*{1\baselineskip}
\centerline{in Partial Fulfillment of the} 
\vspace*{1\baselineskip} 
\centerline{Requirements} 
\vspace*{1\baselineskip}
\centerline{for the Degree of} 
\vspace*{1\baselineskip} 
\centerline{\textbf{Doctor of Philosophy}} 
\vspace*{1\baselineskip}
\centerline{in}
\vspace*{1\baselineskip} 
\centerline{\textbf{Applied Mathematics and Statistics}}
\vspace*{2\baselineskip}
\centerline{Stony Brook University} 
\vspace*{2\baselineskip} 
\centerline{\textbf{August, 2022}}

\newpage
\pagenumbering{roman}
\setcounter{page}{2}
\centerline{\textbf{Stony Brook University}} \vspace*{1\baselineskip}
\centerline{The Graduate School} 
\vspace*{1\baselineskip}
\centerline{\textbf{Eric Arthur Werneburg}} 
\vspace*{1.5\baselineskip}
\centerline{We, the dissertation committee for the above candidate for
the}
\vspace*{1\baselineskip}
\centerline{Doctor of Philosophy degree, hereby recommend}
\vspace*{1\baselineskip}
\centerline{acceptance of this dissertation} \vspace*{2\baselineskip}
\centerline{\textbf{Andrew Peter Mullhaupt}} 
\centerline{\textbf{Research Professor, Applied Mathematics and Statistics}} 
\vspace*{1.\baselineskip}
\centerline{\textbf{Pawel Polak}} 
\centerline{\textbf{Assistant Professor, Applied Mathematics and Statistics}}
\vspace*{1.\baselineskip}
\centerline{\textbf{Stanislav Uryasev}}
\centerline{\textbf{Professor and  Frey Family Endowed Chair, Applied Mathematics and Statistics}}
\vspace*{1.\baselineskip}
\centerline{\textbf{Ming Gu}}
\centerline{\textbf{Professor of Mathematics, University of California, Berkeley}}
\vspace*{1.\baselineskip}
\centerline{\textbf{Raphael Douady}}
\centerline{\textbf{Research Professor, University of Paris l-Sorbonne}}
\vspace*{1.\baselineskip}
\centerline{This dissertation is accepted by the Graduate School}
\vspace*{1\baselineskip}
\centerline{Eric Wertheimer}
\vspace*{1\baselineskip}
\centerline{Dean of the Graduate School}

\newpage
\centerline{Abstract of the Dissertation} \vspace*{1\baselineskip} \centerline{\textbf{Training Neural Networks Using Reproducing Kernel Space Interpolation and Model Reduction}} \vspace*{1\baselineskip}
\centerline{by} 
\vspace*{1\baselineskip} 
\centerline{\textbf{Eric Arthur Werneburg}} 
\vspace*{1\baselineskip} 
\centerline{\textbf{Doctor of Philosophy}} 
\vspace*{1\baselineskip} 
\centerline{in} 
\vspace*{1\baselineskip} 
\centerline{\textbf{Applied Mathematics and Statistics}} 
\vspace*{1\baselineskip}
\centerline{Stony Brook University} 
\vspace*{1\baselineskip} 
\centerline{\textbf{2022}} 
\vspace*{2\baselineskip}
We introduce and study the theory of training neural networks using interpolation techniques from reproducing kernel Hilbert space theory. We generalize the method to Krein spaces, and show that widely-used neural network architectures are subsets of reproducing kernel Krein spaces (RKKS). We study the concept of \enquote{associated Hilbert spaces} of RKKS and develop techniques to improve upon the expressivity of various activation functions. Next, using concepts from the theory of functions of several complex variables, we prove a computationally applicable, multidimensional generalization of the celebrated Adamjan-Arov-Krein (AAK) theorem. The theorem yields a novel class of neural networks, called Prolongation Neural Networks (PNN). We demonstrate that, by applying the multidimensional AAK theorem to gain a PNN, one can gain performance superior to both our interpolatory methods and current state-of-the-art methods in noisy environments. We provide useful illustrations of our methods in practice.


\newpage
\begin{KeepFromToc}
  \tableofcontents
\end{KeepFromToc}

\newpage
\centerline{\textbf{Acknowledgements}}
\addcontentsline{toc}{chapter}{\numberline{}Acknowledgements}
\vspace*{2\baselineskip}

First, I extend my utmost thanks to Professor Andrew Mullhaupt for being a supreme role model and mentor. He has shared with me an unfathomable wealth of knowledge, suggested a captivating line of research, and taught me the necessary tools to \enquote{learn quickly}. I also extend my thanks to Zari Rachev and James Glimm for teaching me the foundations of classical quantitative finance. I thank Sister Jeannine Toppin, whose calculating ability was a mathematical inspiration early in my career. I thank Pawel Polak, Raphael Douady, Stanislav Uryasev, Ming Gu, Yifan Sun, John Pinezich, Jiazhou Wang, Chi Kong, and Glenn Thomas Werneburg for helping me strengthen my work. I thank Laurie Dalessio, Christine Rota, and Cathy Arrighetta for their logistical help throughout my time at Stony Brook.

I thank AlphaCrest for granting me an intern position which has played a pivotal role in my education as an applied mathematician. In particular, I thank Xiao Yu for teaching me how to write strong code. 

I wish to extend my special thanks to my mother, father, and brother for their unrelenting love and support.

I thank Dana Michele Castro, as well as my friends Stephen Erickson, Michael Erickson, James Rula, Tara Burns, Kevin Talbot, Danielle Aliotta, Andy LaBella, Alyssa Korpi, and Abraham Rabinowitz for all their support.

Finally, I wish to thank God for the opportunity and talent to pursue mathematics, and for everything else.

\newpage
\pagenumbering{arabic}

\begingroup\DoubleSpacing

\chapter{Overview}

Training neural networks has overwhelmingly been performed via descent methods. These methods include, but are not limited to stochastic gradient descent (SGD), root mean square propagation (RMSProp) \cite{rmsprop}, adaptive gradient method (Adagrad) \cite{adagrad}, natural gradient descent (NGD) \cite{amari1998natural}, and adaptive moment estimation method (Adam) \cite{adam}. Such methods have found some success, particularly in natural language processing and imaging (see \cite{gpt3} and \cite{vgg16} for particularly impressive results). On the other hand, there are notable drawbacks to such training approaches, for which there is no obvious remedy. 

Neural networks often come with a forward-backward error discrepancy, which descent methods cannot overcome. Particularly, the authors in \cite{firenets} prove the existence of neural networks which cannot be trained to the correct parametrization by any algorithm, randomized or deterministic. In other words, error in the \textit{output} often does not correspond with error in the \textit{parameters} of the network. These adversarial examples were constructed using a sophisticated diagonalization argument, attacking the halting property of neural network training methods. Backward-unstable problems such as these are particularly difficult to solve using gradient descent (see \cite{trefethen1997numerical} for an exposure to the subject of forward-backward error). The one-pixel attacks of \cite{onepixelattacks} exemplify the chaotic nature of training neural networks via descent methods. \cite{ljung} gives a rigorous treatment of this behavior (albeit not in the context of imaging) and generalizes it to the broad class of recursive stochastic algorithms.

The asymptotic behavior of a descent-trained neural network is generally unknown; in fact, such neural networks may not even reach a global optimum \cite{zhang2018bpgrad}. Moreover, in the special cases where asymptotics are known, one is often restricted to a small class of activation functions (e.g. ReLU \cite{hajek2019ece}, Section $8.5$) or Lipschitz continuous optimization \cite{zhang2018bpgrad}. This problem is not unique to neural network theory; the field of signal processing once shared a similar problem. In particular, the asymptotic behavior of least mean square (LMS) algorithms, invented in 1960 in \cite{widrow1960adaptive}, was unknown until the 1990s when Hassibi proved that LMS algorithms are $H^{\infty}$-optimal \cite{hassibi1996h}. While it is possible that similar results hold for neural networks, we are unaware of any rigorous treatment presented for the general case.

Finally, descent methods are notoriously inefficient in both time and memory. As a result, there is a growing environmental concern, in that the energy requirement for training neural networks via descent is unsustainable. Concretely, a study done in 2019 by \cite{strubell2019energy} shows that training a relatively large neural network can burn the same amount of carbon in one week as five American cars in their lifetime. The neural network used in this calculation was a \textit{transformer} called GPT-2, which was used for natural language processing (NLP) \cite{gpt2}. A common argument for using such an energy intensive training method is \enquote{You only have to train it once, and never again [leaving room for fine-tuning some parameters].} Unfortunately, this argument fails because there, to date, is no catch-all neural network for NLP, as can be shown empirically by the creation of GPT-3 in 2020 \cite{gpt3}.

The training method developed here is inherently different from its predecessors, because it is performed via interpolation rather than descent. In particular, the interpolation technique merely requires solving a highly structured system of linear equations. This makes the resulting neural network cheaper and faster to train than conventional networks. Moreover, the resulting neural network is well-behaved in the sense that it is of minimal norm when the activation function is induced by the kernel of a reproducing kernel Hilbert space (RKHS). When the activation function is induced by a Krein kernel, the neural network is viewed as the minimizer of a regularized minimization, or loss, problem.



Our training method transplants the well-established theory of RKHS into the framework of neural network theory. We develop novel tools which allow us to study the expressivity of various, attractive neural network architectures. Our theory is general enough to include a rigorous treatment of popular activations, including softplus, hyperbolic tangent, ReLU, Leaky ReLU, GELU, and many others. We leverage and generalize the concept of contractive containment to Krein spaces in order to rigorously justify improvements on these.

The main drawback of interpolation is that, in noisy environments, noise can be interpreted as signal and significantly affect performance. In such situations, one often prefers to perform model reduction, thereby reducing the intrinsic complexity of the model and reducing the amount of noise captured. Classical model reduction techniques such as Adamjan-Arov-Krein (AAK) theory are not suited to the framework of neural network theory, because they are applicable only in the single complex variable case \cite{adamyan1971analytic}. While there has been much progress in generalizing the AAK theory to several complex variables (SCV), namely by Cotlar and Sadosky in \cite{cotlar1994nehari} and \cite{cotlar1996two}, these methods are, as they stand, computationally intractable.

To address this, we introduce and prove a practical, SCV analogue of the AAK theorem, called the \textit{marginal AAK theorem} (mAAK theorem). The mAAK theorem naturally extends to the concept of a \textit{Prolongation Neural Network} (PNN), a fast, interpolation-type neural network which does not suffer from noise-modelling.

Chapters \ref{ch: nn} and \ref{ch: rkhs} respectively introduce the necessary background in neural network theory and reproducing kernel Hilbert spaces. Chapter \ref{ch: rkks} generalizes the theory of RKHS to Krein spaces. Chapter \ref{ch: marriage} forms a bridge between neural network theory and RKKS to form a novel theory for efficiently training neural networks. Of particular importance is the use of the Agler-McCarthy Theorem, which allows one to extend our theory to deep learning. We also point out that many activation functions in practice today generate neural networks which are subsets of our constructed reproducing kernel Krein spaces. 

In Chapter \ref{ch: cs}, we provide background to the SCV AAK theory of Cotlar and Sadosky (\cite{cotlar1994nehari}, \cite{cotlar1996two}). We then introduce and prove the marginal AAK theorem which gives numerically tractable approximation bounds on model reduction. Drawing from this, we introduce the concept of a Prolongation Neural Network, a model-reduced neural network which achieves the bounds given by the mAAK theorem. 

In Chapter \ref{ch: apps}, we apply our theory to several nontrivial problems, comparing our methods against current, state-of-the-art training methods. In particular, we train our neural networks to calculate matrix permanents, complete elliptic integrals of the second kind, and shrinkage techniques from the theory of statistics. We also note that this work provides a complete and rigorous treatment of the theory of neural networks found in \cite{werneburgmachine}.

\chapter{Neural Network Preliminaries}\label{ch: nn}

We begin with some definitions and notation.

\begin{defn}\label{def:neuralnetwork}
A \textbf{feedforward neural network}, $F: \mathbb{C}^n \rightarrow \mathbb{C}^m$ is a finite, alternating composition of linear transformations and nonlinear functions:

\begin{equation}\label{eq:neuralnetwork}
    F(x) = f_{N}(A_{N}f_{N - 1}(A_{N - 1} \dots f_{1}(A_1 x))),
\end{equation}
where $f_N$ may be the identity. $\{f_j\}$ are called \textbf{activation functions} and $\{A_j\}$ are called \textbf{weight matrices}. Given \eqref{eq:neuralnetwork}, we shall call

\begin{equation}
    F_k(x) := f_{k}(A_{k}f_{k - 1}(A_{k - 1} \dots f_{1}(A_1 x)))
\end{equation}
the \textbf{$k^{th}$ layer of $F$ given $x$}, or simply the \textbf{$k^{th}$ layer of $F$}. Given $A_j \in \mathbb{C}^{m_j \times n_j}$, we say the $j^{th}$ layer has \textbf{width} $m_j$. We say $F$ has \textbf{width} $M$ if $M$ is the maximum width over all the layers of $F$.

\end{defn}

\begin{defn}
A \textbf{neural network with skip connection from layer $j$ to layer $k$} ($j < k$) is a function of the form

\begin{equation}
    F(x) = f_{N}(A_{N}f_{N - 1}(A_{N - 1} \dots f_{k + 1}(A_{k + 1}f_{k}(BF_j(x), A_{k}f_{k - 1}(A_{k - 1} \dots f_{1}(A_1 x)))))),
\end{equation}
where $B \in  \mathbb{C}^{b \times n_{j + 1}}$ for the $b \in \mathbb{N}$ appropriate with respect to $f_k$ and $A_{k + 1}$. Here, the matrix $B$ is called a \textbf{skip connection}. More generally, a \textbf{neural network with skip connections} is a function of the form

\begin{multline}\label{eq:skipconn}
    F(x) = f_{N}(B_{N - 1}^{(N)}F_{N - 1}(x), B_{N - 2}^{(N)}F_{N - 2}(x), \dots, \\
    B_{1}^{(N)}F_1(x), B_{0}^{(N)}x, A_{N}f_{N - 1}(B_{N - 2}^{(N - 1)}F_{N - 2}(x), B_{N - 3}^{(N - 1)}F_{N - 3}(x), \dots, \\
    B_{1}^{(N - 1)}F_1(x), B_{0}^{(N - 1)}x, A_{N - 1} \dots f_{1}(B_{0}^{(1)}x, A_1 x))),
\end{multline}
where each $B_j^{(k)}$ has appropriate dimension. Again the collection $\{B_j^{(k)}\}$ are called \textbf{skip connections}.
\end{defn}

Moving forward, we shall denote neural networks by capital letters $F, G, H, \dots$ and their corresponding activation functions by lower case letters $f, g, h, \dots$. Different authors may define what we say is an $N-$layer neural network as an $(N - 1)-$layer neural network. An easy way to remember our definition is that the number of layers corresponds precisely to the number of weight matrices.

\begin{defn}
Given a neural network $F$ of the form
\begin{equation}
    F(x) = f_{N}(A_{N}f_{N - 1}(A_{N - 1} \dots f_{1}(A_1 x))),
\end{equation}
where $A_j \in \mathbb{C}^{m_j \times n_j}$, we define the \textbf{architecture of $F$} as the tuple 

\begin{equation}
    (f_1, f_2, \dots f_N, m_1, n_1, m_2, n_2, \dots, m_N, n_N)
\end{equation}
\end{defn}

\begin{defn}
Given a neural network $F$ with width $N$ and activation functions $\{f_j\}_{j=1}^{N}$, the space of functions defined by the completion of the span of all neural networks with width $N$ and activation functions $\{f_j\}_{j=1}^{N}$ is called the \textbf{expressivity} of $F$.
\end{defn}

While much work has been put into finding \enquote{good} architectures for neural networks, there have been few advances in regard to choosing activation functions. The \textit{universal approximation theorems} (UAT) in the field take a large part in this. In particular, Cybenko's work on sigmoidal functions \cite{cybenko1989approximation} and Perekrestenko's work on the rectified linear unit (ReLU) \cite{perekrestenko2018universal} have been pivotal in the popularity of these as activation functions in neural networks.







In Chapters \ref{ch: rkhs} and \ref{ch: rkks} we will construct many functions which not only have a universal approximation property, but whose expressivity is strictly superior to ReLU and various sigmoidal functions currently being used in practice. 

\chapter{Reproducing Kernel Hilbert Space Preliminaries}\label{ch: rkhs}
We review some basic notions from the theory of reproducing kernel Hilbert spaces (RKHS). We show a classical interpolation result from RKHS and use it to construct a learning algorithm for neural networks with activation functions derived from kernel functions.

\section{Introduction}
While reproducing kernel Hilbert spaces over more general fields can be studied, we will confine ourselves to looking at the complex numbers $\mathbb{C}$.

Recall a Hilbert space is a complete, inner product space.

\begin{defn}
Given a set $\mathcal{X}$, a reproducing kernel Hilbert space (RKHS) $\mathcal{H}$ is a Hilbert space with inner product $\langle \cdot, \cdot \rangle$, which is a subspace of the set of all functions $\mathcal{F}(\mathcal{X}, \mathbb{C})$ from $\mathcal{X}$ to $\mathbb{C}$, such that for every $y \in \mathcal{X}$, the evaluation functional $E_y: \mathcal{H} \rightarrow \mathbb{C}$, $E_y(f) = f(y)$ is bounded. 
\end{defn}

\begin{theorem}[Riesz Representation Theorem]
Let $\mathcal{H}$ be a complex Hilbert space over $\mathcal{X}$ with inner product $\langle \cdot, \cdot \rangle$. Then for every continuous linear functional $L: \mathcal{H} \rightarrow \mathbb{C}$ on $\mathcal{H}$, there exists a unique $\nu \in \mathcal{X}$ such that $L_{\nu} = L$, where $L_{\nu}(\omega) = \langle \omega, \nu \rangle$.
\end{theorem}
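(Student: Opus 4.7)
The plan is to give the standard Hilbert-space proof, splitting into the trivial and nontrivial cases and then verifying uniqueness. First I would dispense with $L \equiv 0$ by taking $\nu = 0$, for which $L_\nu(\omega) = \langle \omega, 0 \rangle = 0 = L(\omega)$ trivially. This reduces the existence problem to the case where $L$ is not identically zero.

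For the nontrivial case, the main idea is to use the orthogonal decomposition of $\mathcal{H}$ with respect to the kernel of $L$. Specifically, I would set $K = \ker L = \{f \in \mathcal{H} : L(f) = 0\}$; since $L$ is continuous and linear, $K$ is a closed linear subspace of $\mathcal{H}$, and because $L \not\equiv 0$ we have $K \neq \mathcal{H}$. By the orthogonal decomposition theorem for closed subspaces of a Hilbert space, $\mathcal{H} = K \oplus K^{\perp}$ and $K^{\perp} \neq \{0\}$, so I may choose a unit vector $u \in K^{\perp}$. Then for any $\omega \in \mathcal{H}$, the vector
\begin{equation*}
\omega - \frac{L(\omega)}{L(u)} u
\end{equation*}
lies in $K$ (apply $L$ to verify), hence is orthogonal to $u$. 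Taking inner products with $u$ yields $\langle \omega, u \rangle = \frac{L(\omega)}{L(u)} \langle u, u \rangle = \frac{L(\omega)}{L(u)}$, which I would rearrange to obtain $L(\omega) = \langle \omega, \overline{L(u)}\, u \rangle$. Setting $\nu = \overline{L(u)}\, u$ then gives the desired representing vector.

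For uniqueness, I would suppose $L_{\nu} = L_{\nu'}$ for two candidates $\nu, \nu'$. Then $\langle \omega, \nu - \nu' \rangle = 0$ for every $\omega \in \mathcal{H}$; specializing to $\omega = \nu - \nu'$ gives $\|\nu - \nu'\|^2 = 0$, forcing $\nu = \nu'$. The single conceptual step that carries the argument is the orthogonal decomposition with respect to the closed subspace $\ker L$; the main obstacle, and the only place where completeness of $\mathcal{H}$ is truly used, is invoking the existence of $K^{\perp}$ as a nontrivial complement, which in turn relies on the projection theorem for closed convex sets in a Hilbert space. Everything else is elementary linear algebra combined with the continuity of $L$.
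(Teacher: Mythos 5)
Your proof is correct and is the standard orthogonal-complement argument; the paper states the Riesz Representation Theorem as a classical fact without supplying a proof, so there is no in-paper argument to compare against. One small note on the statement you were handed: it says ``there exists a unique $\nu \in \mathcal{X}$,'' but $\mathcal{X}$ is the underlying set on which the Hilbert space functions are defined, not the Hilbert space itself. The representing vector lives in $\mathcal{H}$, as your construction $\nu = \overline{L(u)}\,u \in \mathcal{H}$ correctly produces; the $\mathcal{X}$ in the statement is a typo and should read $\mathcal{H}$. Your proof is otherwise complete, and you are right to flag the projection theorem for closed subspaces as the single place where completeness of $\mathcal{H}$ is essential.
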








By the Riesz Representation Theorem, we have for every $y \in \mathcal{X}$ the existence of a unique $k_y \in \mathcal{H}$ such that $\langle f, k_y \rangle = f(y) \ \forall f \in \mathcal{H}$. The function $k_y$ is called the \textbf{reproducing kernel for the point $\mathbf{y}$}.

It will be helpful to introduce the following notation:

\begin{equation}
    K(x, y) = k_y(x)
\end{equation}

$K$ is called the \textbf{reproducing kernel for} $\mathcal{H}$.

The following lemma is of fundamental importance, because it shows that when we do have a RKHS, we can approximate all its elements arbitrarily well using reproducing kernels at points $y$.

\begin{lemma}
Given a RKHS $\mathcal{H}$ over $\mathcal{X}$ with reproducing kernel $K(\cdot, y) = k_y(\cdot)$, the space of functions $\mathrm{span}\{k_y \ | \ y \in \mathcal{X}\}$ are dense in $\mathcal{H}$.
\end{lemma}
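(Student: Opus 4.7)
The plan is to use a standard orthogonal complement argument, exploiting the fact that the reproducing property $\langle f, k_y \rangle = f(y)$ directly links vanishing against kernels to vanishing as a function. Since a closed subspace $M$ of a Hilbert space equals the whole space if and only if $M^{\perp} = \{0\}$, I would reduce the density claim to showing that no nonzero element of $\mathcal{H}$ is orthogonal to every $k_y$.

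More concretely, first I would let $M = \overline{\mathrm{span}}\{k_y \mid y \in \mathcal{X}\}$, which is by construction a closed subspace of $\mathcal{H}$. I would then take an arbitrary $f \in M^{\perp}$ and note that, for every $y \in \mathcal{X}$, the reproducing property gives
\[
f(y) \;=\; \langle f, k_y \rangle \;=\; 0,
\]
since $k_y \in M$. Hence $f$ is the zero function, and in particular the zero element of $\mathcal{H}$. This shows $M^{\perp} = \{0\}$, so $M = (M^{\perp})^{\perp} = \mathcal{H}$, which is the desired density.

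There is essentially no obstacle here beyond correctly invoking the two ingredients already available: the Riesz Representation Theorem (which guarantees the $k_y$ exist and satisfy the reproducing identity) and the standard Hilbert space fact that $(M^{\perp})^{\perp}$ equals the closure of $M$. The only subtlety worth flagging is that "$f = 0$ as a function" coincides with "$f = 0$ as a vector in $\mathcal{H}$" precisely because $\mathcal{H}$ is, by definition, a subspace of $\mathcal{F}(\mathcal{X}, \mathbb{C})$, so pointwise vanishing is genuine vanishing in the ambient Hilbert space. No separability or cardinality assumption on $\mathcal{X}$ is needed, since the argument passes through the orthogonal complement rather than through an explicit approximating sequence.
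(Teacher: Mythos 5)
Your proposal is correct and is essentially the paper's argument: both reduce density to showing that any $f\in\mathcal{H}$ orthogonal to every $k_y$ satisfies $f(y)=\langle f,k_y\rangle=0$ for all $y$, hence $f\equiv 0$. The paper phrases this as a contradiction while you phrase it directly via $M^{\perp}=\{0\}\Rightarrow M=\mathcal{H}$, but the substance is the same.
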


\begin{proof}
$\mathrm{span}\{k_y\}$ are \textit{not dense} in $\mathcal{H}$ only if there exists a function $f \in \mathcal{H}$ which is not everywhere zero, such that $f$ is orthogonal to all functions $k_y$. This means 

\begin{equation}
    0 = \langle f, k_y \rangle = f(y) \qquad \forall y \in \mathcal{X}
\end{equation}

This implies $f \equiv 0$, thus yielding a contradiction.
\end{proof}

\begin{defn}
Given a set $\mathcal{X}$, a function $K: \mathcal{X} \times \mathcal{X} \rightarrow \mathbb{C}$ is a \textbf{kernel function} if for any set of points $x_1, \dots, x_N \in \mathcal{X}$, the matrix $P \in \mathbb{C}^{N \times N}$, whose $i, j$ element is defined as $K(x_i, x_j)$ is nonnegative semidefinite, denoted $P \succeq 0$.
\end{defn}

The next two theorems show there is a bijection between kernel functions on a set and RKHS on the same set.

\begin{theorem}
Given a RKHS $\mathcal{H}$ on $\mathcal{X}$ with reproducing kernel $K$, $K$ is a kernel function.
\end{theorem}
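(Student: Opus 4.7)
The plan is to verify the defining inequality for kernel functions directly: for an arbitrary finite collection $x_1,\dots,x_N \in \mathcal{X}$ and an arbitrary vector $c = (c_1,\dots,c_N)^T \in \mathbb{C}^N$, I would show that the quadratic form $c^*Pc$ is nonnegative, where $P_{ij} = K(x_i,x_j)$. The obvious candidate for witnessing this nonnegativity is the vector in $\mathcal{H}$ given by the corresponding linear combination of reproducing kernels, namely $g := \sum_{j=1}^N c_j\, k_{x_j}$. Since $\mathcal{H}$ is a vector space containing each $k_{x_j}$, we have $g \in \mathcal{H}$, so $\|g\|^2 \geq 0$ is automatic.

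Next, I would expand $\|g\|^2$ using sesquilinearity of the inner product and evaluate the cross terms with the reproducing property. Concretely, writing out
\[
\|g\|^2 = \Bigl\langle \sum_j c_j k_{x_j},\ \sum_i c_i k_{x_i}\Bigr\rangle = \sum_{i,j} c_j\, \overline{c_i}\, \langle k_{x_j}, k_{x_i}\rangle,
\]
and then using $\langle k_{x_j}, k_{x_i}\rangle = k_{x_j}(x_i) = K(x_i,x_j)$ by definition of the reproducing kernel, I obtain $\|g\|^2 = \sum_{i,j} \overline{c_i}\, c_j\, K(x_i,x_j) = c^*Pc$. Since $\|g\|^2 \geq 0$, this immediately gives $c^*Pc \geq 0$, which is the definition of $P \succeq 0$, and therefore $K$ is a kernel function.

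There is no substantive obstacle in this argument; the only place requiring care is the bookkeeping of the conjugation convention in the inner product versus the order of the arguments of $K$, which must be tracked so that the expansion of $\|g\|^2$ really does yield $c^*Pc$ rather than $c^T P \overline{c}$ or the transpose. Everything else follows from the Riesz-representation definition of $k_y$ and the elementary observation that $\mathcal{H}$ is closed under finite linear combinations.
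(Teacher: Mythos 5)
Your proof is correct and is essentially the same as the paper's: both form the linear combination $\sum_j c_j k_{x_j}$, expand its squared norm via sesquilinearity and the reproducing property $\langle k_{x_j}, k_{x_i}\rangle = K(x_i,x_j)$, and identify the result with the quadratic form $c^*Pc \geq 0$.
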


\begin{proof}
Consider $x_1, \dots, x_N \in \mathcal{X}$, for some arbitrary $N \in \mathbb{N}$. Let $P \succeq 0$ denote the matrix whose $i, j$ element is $K(x_i, x_j)$. Because $P \succeq 0$ if and only if $\sum\limits_{i, j=1}^N y_iy_jP_{ij} \geq 0$ for all $y_1, \dots y_N \in \mathbb{C}$, we have

\begin{equation}
    \sum\limits_{i, j=1}^N \bar{y_i}y_jP_{ij} = \sum\limits_{i, j=1}^N \bar{y_i}y_jK(x_i, x_j) = \sum\limits_{i, j = 1}^N \bar{y_i} y_j\langle k_{x_j}, k_{x_i} \rangle = ||\sum\limits_{i=1}^Ny_ik_{x_i}||^2 \geq 0,
\end{equation}

where $||\cdot||$ is the norm induced by the inner product $\langle, \rangle$ of the Hilbert space $\mathcal{H}$.
\end{proof}

Note that in general the reproducing kernel of a RKHS is strictly positive definite; indeed, if $P$ were not strictly positive definite, then there would exist a finite set of points $x_1, \dots x_M$ such that \textit{every} function $f \in \mathcal{H}$ would have a linear dependence on them. In other words, there would exist $\alpha_1, \alpha_2, \dots, \alpha_M$, such that

\begin{equation}
    \langle f, \sum\limits_{i=1}^M\alpha_i k_{x_i} \rangle = 0,
\end{equation}
for all $f \in \mathcal{H}$.

We will see later on that these spaces will have drawbacks in practice. The next theorem forms the backbone of our RKHS theory which will be applied to training neural networks.

\begin{theorem}[E. H. Moore]
Let $\mathcal{X}$ be a set, and let $K: \mathcal{X} \times \mathcal{X} \rightarrow \mathbb{C}$ be a kernel function. Then there exists a RKHS on $\mathcal{X}$, denoted $\mathcal{H}$ such that $K$ is the reproducing kernel for $\mathcal{H}$.
\end{theorem}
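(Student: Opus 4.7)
The plan is to build $\mathcal{H}$ explicitly from $K$ in the canonical way: first construct a pre-Hilbert space out of finite linear combinations of kernel sections, then complete it, and finally realize that completion as a genuine space of functions on $\mathcal{X}$ on which the evaluation functionals are bounded and implemented by the $k_y$.

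First I would let $\mathcal{H}_0 := \mathrm{span}\{k_y : y \in \mathcal{X}\} \subset \mathcal{F}(\mathcal{X}, \mathbb{C})$, where $k_y(x) := K(x,y)$, and define a sesquilinear form on generators by $\langle k_y, k_x \rangle := K(x,y)$, extended conjugate-linearly in the second slot. The first routine check is that this form is well-defined on $\mathcal{H}_0$: if $\sum_i \alpha_i k_{y_i} = \sum_j \beta_j k_{z_j}$ pointwise, then for any $w$ the value $\langle \sum_i \alpha_i k_{y_i}, k_w \rangle$ reduces to $\sum_i \alpha_i K(w,y_i)$, which is by hypothesis $\sum_j \beta_j K(w,z_j)$; sesquilinearity then transfers this to the second slot. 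Positivity and the Cauchy--Schwarz inequality follow immediately from the nonnegative semidefiniteness in the definition of a kernel function. In particular, the set $\mathcal{N} := \{f \in \mathcal{H}_0 : \langle f,f\rangle = 0\}$ is a subspace, and on $\mathcal{H}_0/\mathcal{N}$ the form becomes a genuine inner product.

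Next I would establish the reproducing identity already at the pre-Hilbert level: for $f = \sum_i \alpha_i k_{y_i} \in \mathcal{H}_0$ and any $y \in \mathcal{X}$,
\begin{equation}
\langle f, k_y \rangle = \sum_i \alpha_i \langle k_{y_i}, k_y\rangle = \sum_i \alpha_i K(y, y_i) = f(y).
\end{equation}
Cauchy--Schwarz then gives $|f(y)| = |\langle f, k_y\rangle| \le \|f\| \sqrt{K(y,y)}$, so each evaluation functional is bounded on $\mathcal{H}_0$. A consequence is that if $f \in \mathcal{N}$ then $f(y) = 0$ for every $y$, so the quotient by $\mathcal{N}$ is harmless: elements of $\mathcal{H}_0/\mathcal{N}$ still correspond to honest functions on $\mathcal{X}$.

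I would then complete $\mathcal{H}_0/\mathcal{N}$ abstractly to a Hilbert space, and the key remaining step, where most of the subtlety lies, is to realize this abstract completion as a subspace of $\mathcal{F}(\mathcal{X},\mathbb{C})$. The idea is that for any Cauchy sequence $(f_n) \subset \mathcal{H}_0$, the estimate $|f_n(y) - f_m(y)| \le \|f_n - f_m\|\sqrt{K(y,y)}$ shows $(f_n(y))$ is Cauchy in $\mathbb{C}$ for every $y$, so it converges to some $f(y)$; define the abstract limit to be this pointwise limit $f$. I must check that this assignment is well-defined (two Cauchy sequences representing the same abstract element give the same pointwise limit, which follows from the same bound applied to their difference), that the resulting map from the completion into $\mathcal{F}(\mathcal{X},\mathbb{C})$ is injective, and that $\mathcal{H}_0$ sits densely inside as functions. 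The main obstacle is exactly this identification; everything else is bookkeeping with Cauchy--Schwarz.

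Finally, to verify the reproducing property on all of $\mathcal{H}$, I take $f \in \mathcal{H}$ with $f_n \to f$ in norm and $f_n \in \mathcal{H}_0$; then $\langle f, k_y\rangle = \lim_n \langle f_n, k_y\rangle = \lim_n f_n(y) = f(y)$, using continuity of the inner product in the first slot and the already-established pointwise convergence. The same inequality $|f(y)| \le \|f\|\sqrt{K(y,y)}$ then shows each $E_y$ is bounded on $\mathcal{H}$, so $\mathcal{H}$ is a RKHS, and by construction $K$ is its reproducing kernel.
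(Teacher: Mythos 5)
Your proposal is correct and follows essentially the same route as the paper: both construct the pre-Hilbert span of kernel sections with the sesquilinear form induced by $K$, verify well-definedness and the reproducing identity there, use the Cauchy--Schwarz bound $|f(y)|\le\|f\|\sqrt{K(y,y)}$ to show Cauchy-in-norm implies Cauchy-pointwise, and then realize the abstract completion as a space of functions where the reproducing property passes to the limit. Your explicit treatment of the null space $\mathcal{N}$ is a slightly more careful phrasing of the same observation the paper makes (that $B(f,f)=0$ forces $f\equiv 0$), not a genuinely different argument.
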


\begin{proof}
Here, we follow \cite{paulsen2016introduction}. As usual, let $k_y(x) = K(x, y)$, for each $y \in \mathcal{X}$ and let $\mathcal{W} \subseteq \mathcal{F}(\mathcal{X}, \mathbb{C})$ be the span of the functions $\{k_y\}$.

We proceed by first constructing an inner product $B(\cdot, \cdot)$ on $\mathcal{W}$. Then we shall take the completion of $\mathcal{W}$ and show that it includes only functions on $\mathcal{X}$. 

Consider the function $B: \mathcal{W} \times \mathcal{W} \rightarrow \mathbb{C}$, 

\begin{equation}
    B(\sum\limits_{i}\alpha_i k_{x_i}, \sum\limits_{j}\beta_j k_{x_j}) = \sum\limits_{i,j}\bar{\beta_j}\alpha_i K(x_j, x_i)
\end{equation}
    
$B$ is trivially sesquilinear, so we need only show it is well-defined. To that end, suppose $f = \sum\limits_i \alpha_i k_{x_i} \equiv 0$. Then, 

\begin{equation}
    B(f, k_{x_j}) = \sum\limits_{k} \alpha_k K(x_j, x_k) = f(x_j) = 0
\end{equation}

Similarly, $B(k_{x_j}, f) = 0$. On the other hand, if $B(f, \omega) = 0$ for all $\omega \in \mathcal{W}$, then in particular, $B(f, k_y) = 0$ for all $y \in \mathcal{X}$ which means $f \equiv 0$.

In addition, $B(f, k_y) = f(y)$. Finally, because

\begin{equation}
    B(f, f) = \sum\limits_{i,j}\bar{\alpha_i}\alpha_j K(x_i, x_j) \geq 0,
\end{equation}
and, by a Cauchy-Schwarz-type argument, $B(f, f) = 0$ if and only if $B(f, \omega) = 0$ for all $\omega \in \mathcal{W}$, $B$ is an inner product on $\mathcal{W}$. Thus, we have a well-defined inner product $B(\cdot, \cdot)$, and a reproducing kernel at each point $y \in \mathcal{X}$.

To complete the proof, we must take the completion of $\mathcal{W}$, denoted $\mathcal{H}$ and show that only functions which are in the span of reproducing kernels $\{k_y\}$ are in $\mathcal{H}$. Let $h \in \mathcal{H}$ and consider a Cauchy sequence $\{f_n\}$ converging to $h$. Then, 

\begin{equation}
    |f_n(x) - f_m(x)| = \langle f_n - f_m, k_x \rangle \leq ||f_n - f_m||\sqrt{K(x, x)}.
\end{equation}

In other words, the Cauchy sequence $\{f_n\}$ is actually pointwise Cauchy, so we may take $h = \lim\limits_{n \rightarrow \infty}f_n$. To show that $h$ is not only in the completion of $\mathcal{W}$, but also spanned by $\{k_y\}$, we simply compute 

\begin{equation}
    B(h, k_y) = B(\lim\limits_{n \rightarrow \infty}f_n, k_y) = \lim\limits_{n \rightarrow \infty}B(f_n, k_y) = \lim\limits_{n \rightarrow \infty}f_n(y) = h(y).
\end{equation}

We have thus constructed the reproducing kernel Hilbert space $\mathcal{H}$ on $\mathcal{X}$ with reproducing kernel $K$.

\end{proof}

\begin{theorem}\label{thm: sumkern}
A countable sum of kernel functions, if it converges, is a kernel function.
\end{theorem}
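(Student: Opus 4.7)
The plan is to reduce the statement to a standard fact about positive semidefinite matrices, namely that the cone of PSD matrices is closed under addition and under pointwise (equivalently norm) limits. Suppose $\{K_n\}_{n=1}^\infty$ is a sequence of kernel functions on $\mathcal{X}$ and that $K(x,y) := \sum_{n=1}^\infty K_n(x,y)$ converges for every $x,y \in \mathcal{X}$. I want to show that for any finite collection $x_1,\dots,x_N \in \mathcal{X}$, the Gram matrix $P \in \mathbb{C}^{N\times N}$ with entries $P_{ij} = K(x_i,x_j)$ is nonnegative semidefinite.

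First, I would fix the points $x_1,\dots,x_N$ and, for each $n$, form the matrix $P^{(n)}$ with entries $P^{(n)}_{ij} = K_n(x_i,x_j)$. By hypothesis each $P^{(n)} \succeq 0$. Using the characterization that $P \succeq 0$ iff $\sum_{i,j} \bar{y}_i y_j P_{ij} \ge 0$ for every $y \in \mathbb{C}^N$, I would take an arbitrary $y \in \mathbb{C}^N$ and compute
\begin{equation}
\sum_{i,j=1}^N \bar{y}_i y_j P_{ij} = \sum_{i,j=1}^N \bar{y}_i y_j \sum_{n=1}^\infty K_n(x_i,x_j) = \sum_{n=1}^\infty \sum_{i,j=1}^N \bar{y}_i y_j K_n(x_i,x_j),
\end{equation}
where the interchange of summations is legitimate because the inner sum over $i,j$ is finite (a linear combination of convergent scalar series is convergent, with the expected value). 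Each term in the outer sum equals $y^* P^{(n)} y \ge 0$, so the total is a sum of nonnegative reals, hence nonnegative.

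An equivalent, perhaps cleaner, presentation would be to invoke closedness: the partial sums $S_M = \sum_{n=1}^M P^{(n)}$ are PSD (the PSD cone is closed under finite sums), and since $\mathbb{C}^{N\times N}$ is finite-dimensional, entrywise convergence $S_M \to P$ coincides with convergence in any norm; because the PSD cone is closed, $P \succeq 0$.

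I do not anticipate any serious obstacle. The only place requiring a moment of care is the Fubini-type interchange of the two summations, but since one sum is finite this is elementary. A minor subtlety is what \enquote{converges} should mean in the hypothesis; pointwise convergence of $\sum_n K_n(x,y)$ at each pair $(x,y)$ is exactly what is needed for the argument above, and this is the natural reading.
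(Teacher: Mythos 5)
Your proposal is correct and matches the paper's approach: both reduce to the fact that finite sums of PSD matrices are PSD and that the quadratic form $y^*Py$ (equivalently, the PSD cone) is closed under the limit. Your first presentation (swapping the finite and infinite sums) and your alternative (closedness of the PSD cone in finite dimensions) are both sound, and the latter is essentially word-for-word the paper's argument.
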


\begin{proof}
The sum of a finite number of nonnegative semidefinite matrices is nonnegative semidefinite. Given nonnegative semidefinite matrices $P_i$, we have 

\begin{equation}\label{eq: posdef}
    x^*(\sum\limits_{i=1}^N P_i)x \geq 0
\end{equation}

Because $[0, \infty]$ is closed and the product $x^*Px$ is continuous in the elements of $P$, \eqref{eq: posdef} holds in the limit as $N \rightarrow \infty$.
\end{proof}

The following function will be of particular interest. Let $\mathbb{B}^n$ denote the complex unit $n-$ball.

\begin{defn}
The $\mathbf{\omega-}$\textbf{kernel} $K: \mathbb{B}^n \times \mathbb{B}^n \rightarrow \mathbb{C}$ is defined as 

\begin{equation}
    K(x, y) = \frac{1}{1 - y^*x}
\end{equation}
\end{defn}

\begin{theorem}\label{thm: schurkern}
Let $x, y \in \mathcal{X}$ for some set $\mathcal{X} \in \mathbb{C}^d$ and let $K(x, y) = \sum\limits_{n=0}^{\infty}a_n(y^*x)^n$, where $\sum\limits_{n=0}^{\infty}a_n < \infty$ and $a_j \geq 0$ for all $j$. Then $K$, if it is bounded on $\mathcal{X}$, is a kernel function on $\mathcal{X}$. 
\end{theorem}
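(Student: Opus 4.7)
The plan is to express $K$ as a nonnegative-coefficient sum of kernel functions of the form $(y^*x)^n$, then invoke Theorem \ref{thm: sumkern} to conclude. Concretely, I would proceed in three stages.

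First, I would establish that the ``basic'' kernel $K_1(x,y) = y^*x$ is itself a kernel function on $\mathcal{X} \subseteq \mathbb{C}^d$. Given points $x_1, \dots, x_N \in \mathcal{X}$, the matrix $P$ with $P_{ij} = x_j^* x_i$ satisfies
\begin{equation}
    \sum_{i,j=1}^N \bar{y_i} y_j P_{ij} = \sum_{i,j=1}^N \bar{y_i} y_j x_j^* x_i = \left\lVert \sum_{i=1}^N \bar{y_i} x_i \right\rVert^2 \geq 0,
\end{equation}
so $P \succeq 0$. This is essentially the Gram-matrix computation and is the only place the underlying Hilbert space structure of $\mathbb{C}^d$ enters.

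Second, I would show that $K_n(x,y) := (y^*x)^n$ is a kernel function for every $n \geq 0$. Since the kernel matrix for $K_n$ is the Hadamard (entrywise) $n$-th power of the kernel matrix for $K_1$, this reduces to the Schur product theorem: the entrywise product of two positive semidefinite matrices is positive semidefinite. Iterating $n$ times gives $K_n$ a positive semidefinite kernel matrix. Multiplying by $a_n \geq 0$ preserves positive semidefiniteness, so each $a_n K_n$ is a kernel function. (The $n=0$ term is the constant kernel $a_0$, which is trivially positive semidefinite.)

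Finally, I would note that the hypothesis that $K$ is bounded on $\mathcal{X}$ ensures the partial sums $\sum_{n=0}^{N} a_n (y^*x)^n$ converge pointwise to $K(x,y)$ on $\mathcal{X} \times \mathcal{X}$, so the series is a well-defined countable sum of kernel functions. Theorem \ref{thm: sumkern} then immediately gives that $K$ is a kernel function.

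The main obstacle is the second step, namely invoking (or proving inline) the Schur product theorem. If the paper has not already cited it, the cleanest route is to recall that a PSD matrix $P$ of the form $P_{ij} = \langle u_i, u_j \rangle$ in some Hilbert space has Hadamard powers $P^{\odot n}$ realized as Gram matrices of tensor powers $u_i^{\otimes n}$, which immediately exhibits PSD-ness. After that, everything else is a direct application of Theorem \ref{thm: sumkern} and the closure of PSD matrices under nonnegative scalar multiplication and limits.
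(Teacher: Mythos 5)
Your proposal is correct and follows essentially the same route as the paper: establish that $y^*x$ is a kernel, apply the Schur product theorem to conclude $(y^*x)^n$ is a kernel for each $n$, and finish by invoking Theorem \ref{thm: sumkern}. You fill in a few details the paper leaves implicit (the Gram-matrix verification for $y^*x$, the tensor-power justification of the Schur product theorem, and the convergence check), but the underlying argument is identical.
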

\begin{proof}
Note $f(x, y) = y^*x$ is a kernel function. Therefore, the matrix 

\begin{equation}
    \begin{bmatrix}
    (y_1^*x_1)^m & (y_2^*x_1)^m & \dots & (y_N^*x_1)^m \\
    (y_1^*x_2)^m & (y_2^*x_2)^m & \dots & (y_N^*x_2)^m \\
    \vdots & \vdots & \dots & \vdots \\
    (y_1^*x_N)^m & (y_2^*x_N)^m & \dots & (y_N^*x_N)^m
    \end{bmatrix}
\end{equation}
is nonnegative semidefinite, because it is a Schur product of $m$ nonnegative semidefinite matrices. So, $(y^*x)^m$ is a kernel function for all $m$. The result follows from Theorem \ref{thm: sumkern}.
\end{proof}

\begin{corollary}\label{cor: wkern}
The $\omega-$kernel is a kernel function.
\end{corollary}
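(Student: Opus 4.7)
The plan is to expand the $\omega$-kernel as a geometric series and then reduce to the kernel-sum results already proved. For any $x, y \in \mathbb{B}^n$, the Cauchy--Schwarz inequality gives $|y^*x| \le \|y\|\,\|x\| < 1$, so the expansion $\tfrac{1}{1 - y^*x} = \sum_{n=0}^{\infty} (y^*x)^n$ converges absolutely and pointwise on $\mathbb{B}^n \times \mathbb{B}^n$.

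Next, I would observe that the Schur-product argument from the proof of Theorem \ref{thm: schurkern} shows each monomial $(y^*x)^n$ is a kernel function on $\mathbb{B}^n$: for any finite sample $x_1, \ldots, x_N \in \mathbb{B}^n$, the matrix with $(i,j)$ entry $x_j^* x_i$ is (the conjugate of) the Gram matrix of $x_1, \ldots, x_N$ and is therefore positive semidefinite, and Schur powers of positive semidefinite matrices are positive semidefinite. Then, on the same sample, the partial-sum matrices $\bigl[\sum_{n=0}^{m}(x_j^* x_i)^n\bigr]_{i,j}$ converge entrywise, as $m \to \infty$, to $[K(x_i, x_j)]_{i,j}$, because each entry is a convergent geometric series. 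Since the cone of positive semidefinite matrices is closed, the limit is positive semidefinite, which is exactly the conclusion Theorem \ref{thm: sumkern} delivers for this pointwise-convergent sum of kernels.

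The only subtle point, and thus the one I would flag explicitly, is that Theorem \ref{thm: schurkern} does \emph{not} apply verbatim: its hypothesis $\sum a_n < \infty$ fails for the coefficients $a_n \equiv 1$ arising in the geometric expansion, and indeed the $\omega$-kernel is unbounded on $\mathbb{B}^n$ (it blows up as $y^*x \to 1$). What rescues the argument is that we only need pointwise convergence on finite samples, which Cauchy--Schwarz on the open ball supplies directly; uniform boundedness of $K$ on all of $\mathbb{B}^n$ is neither available nor needed, and the cleanest route is therefore to invoke Theorem \ref{thm: sumkern} on the partial sums rather than Theorem \ref{thm: schurkern} on the full series.
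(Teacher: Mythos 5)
Your proof is correct, and it follows the same conceptual route the paper intends: expand $\tfrac{1}{1-y^*x}$ as the geometric series $\sum_{n\ge 0}(y^*x)^n$, observe that each monomial $(y^*x)^n$ is a kernel function via the Schur-product argument, and pass to the limit. The paper offers no proof for this corollary; its placement immediately after Theorem \ref{thm: schurkern} strongly suggests the intended argument is ``apply Theorem \ref{thm: schurkern} with $a_n\equiv 1$.''

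The value you add is the observation that Theorem \ref{thm: schurkern} does \emph{not} apply as stated: its hypotheses require $\sum_n a_n < \infty$ and boundedness of $K$ on $\mathcal{X}$, and both fail for the $\omega$-kernel on $\mathbb{B}^n$ (the coefficients are identically $1$, and $K(x,y)\to\infty$ as $y^*x\to 1^-$). This is a real gap in the paper's implicit reasoning, not a cosmetic one. Your remedy is exactly right: bypass Theorem \ref{thm: schurkern} and go directly to the two ingredients its proof actually uses --- (i) $(y^*x)^n$ is a kernel function for each $n$, by Schur powers of the Gram matrix $[x_j^*x_i]_{i,j}$, and (ii) Theorem \ref{thm: sumkern}, whose hypothesis is only pointwise convergence of the series, which Cauchy--Schwarz supplies on the open ball. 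One small stylistic note: you needn't frame this as ``invoking Theorem \ref{thm: sumkern} on the partial sums''; Theorem \ref{thm: sumkern} already concerns the full countable sum under pointwise convergence, so you can cite it directly once you have convergence on $\mathbb{B}^n\times\mathbb{B}^n$. Either way, your argument is sound and more careful than what the paper gives.
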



\section{Interpolation}

\begin{theorem}[\cite{paulsen2016introduction}]\label{thm: interpolation}
Let $\mathcal{H}$ be a RKHS over $\mathcal{X}$ with reproducing kernel $K(\cdot, \cdot)$ and consider $x_1, \dots, x_n \in \mathcal{X}$, $\lambda_1, \dots, \lambda_n \in \mathbb{C}$. Let $\boldsymbol{x} = (x_1, \dots, x_n)$ and $\boldsymbol{\lambda} = (\lambda_1, \dots, \lambda_n)^T$. Let $\mathcal{P}$ denote the matrix whose $i,j$ element is $K(x_i, x_j)$. Then, if $\mathcal{P}$ is invertible, the function $f = \sum\limits_{i=1}^n (\mathcal{P}^{-1} \boldsymbol{\lambda})_{i}k_{x_i}$ interpolates $\boldsymbol{x}$ and $\boldsymbol{\lambda}$, i.e. $f(x_j) = \lambda_j, \ j=1, \dots, n$. Moreover, $f$ is the function of minimum norm induced by the inner product of $\mathcal{H}$.
\end{theorem}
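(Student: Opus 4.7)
The plan is to verify the two claims separately: first that the prescribed $f = \sum_{i=1}^n (\mathcal{P}^{-1}\boldsymbol{\lambda})_i k_{x_i}$ actually interpolates the data, and then that it minimizes the norm among all interpolants in $\mathcal{H}$.

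For the interpolation claim, I would evaluate $f$ at each node $x_j$ directly via the reproducing property. Since $k_{x_i}(x_j) = K(x_j, x_i) = \langle k_{x_i}, k_{x_j} \rangle = \mathcal{P}_{ji}$, the evaluation $f(x_j) = \sum_i (\mathcal{P}^{-1}\boldsymbol{\lambda})_i k_{x_i}(x_j)$ is exactly the $j$-th component of the matrix product $\mathcal{P}(\mathcal{P}^{-1}\boldsymbol{\lambda})$, which equals $\lambda_j$. The invertibility of $\mathcal{P}$ is precisely what allows this ansatz to produce a valid interpolant.

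For the minimum-norm claim, I would perform an orthogonal decomposition in $\mathcal{H}$. Let $V := \mathrm{span}\{k_{x_1}, \ldots, k_{x_n}\}$, so by construction $f \in V$. Given any other interpolant $g \in \mathcal{H}$ satisfying $g(x_j) = \lambda_j$ for all $j$, write $h := g - f$. The key observation is that $h$ is orthogonal to $V$: for each $i$, $\langle h, k_{x_i} \rangle = h(x_i) = g(x_i) - f(x_i) = \lambda_i - \lambda_i = 0$ by the reproducing property and the interpolation established in the previous step. Since $f \in V$ and $h \perp V$, Pythagoras gives $\|g\|^2 = \|f\|^2 + \|h\|^2 \geq \|f\|^2$, with equality only when $h \equiv 0$.

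I do not anticipate a genuine obstacle here; both steps are direct applications of the reproducing property together with linear algebra. The only mild point of care is the indexing convention $\mathcal{P}_{ij} = K(x_i, x_j)$ versus $k_{x_i}(x_j) = K(x_j, x_i)$, which involves a transpose that must be tracked correctly when collapsing the sum in the interpolation step. The role of the invertibility hypothesis on $\mathcal{P}$ is twofold: it makes the coefficient vector $\mathcal{P}^{-1}\boldsymbol{\lambda}$ well-defined, and it guarantees that $\{k_{x_1}, \ldots, k_{x_n}\}$ is linearly independent, so the minimizer in $V$ is unique.
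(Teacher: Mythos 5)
Your proof is correct and follows the same strategy as the paper: direct evaluation via the reproducing property for the interpolation claim, and orthogonality of $g - f$ to $\mathrm{span}\{k_{x_i}\}$ for the minimum-norm claim. Your Pythagorean phrasing of the second step is in fact cleaner than the paper's, which gestures at the same orthogonal-projection idea but states it less directly.
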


\begin{proof}[\cite{paulsen2016introduction}]
Plugging in, we get

\begin{equation}
    f(x_j) = \sum\limits_{i=1}^n (\mathcal{P}^{-1} \boldsymbol{\lambda})_{i}k_{x_i}(x_j) = \sum\limits_{i=1}^n (\mathcal{P}^{-1} \boldsymbol{\lambda})_{i}K(x_j, x_i) = (\mathcal{P}\mathcal{P}^{-1})_{j, :} \boldsymbol{\lambda} = \lambda_j,
\end{equation}

where $A_{j, :}$ denotes the $j^{th}$ row of matrix $A$.

Denote the subspace $\text{span}\{k_{x_j}\}_{j=1}^{n}$ as $\mathcal{H}_F$. The function $f$ is of minimum norm because, given another interpolating function $g$, then $(f - g) \in \mathcal{H}_F^{\perp}$. Now, the interpolating function of minimum norm must be the projection of $f$ onto $\mathcal{H}_F$; otherwise, the function would have terms which increase the norm, that could be zeroed out. In particular, if an interpolating function $g$ was of the form $g(z) = \sum\limits_{j=1}^{n}\beta_jk_{x_j}(z)$, then denoting $\boldsymbol{\beta} = (\beta_1, \beta_2, \dots, \beta_n)$, $(\boldsymbol{\beta} - \mathcal{P}^{-1} \boldsymbol{\lambda})$ is in the kernel of $\mathcal{P}$. Therefore, $f$ is the projection of an interpolating function onto $\text{span}\{k_{x_j}\}_{j=1}^{n}$ and we are done.

\end{proof}


\section{Functions in our RKHS}

We begin with a lemma.

\begin{lemma}[\cite{paulsen2016introduction}]\label{lemma: prange}
Let $P \succeq 0$ be an $n \times n$ matrix and $x \in \mathbb{C}^n$. If there exists $c > 0$ such that $cP \succeq xx^*$, then $x$ is in the range of $P$. Moreover, if $Py = x$, then $0 \leq \langle x, y \rangle \leq c$.
\end{lemma}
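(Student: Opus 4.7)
The plan is to exploit the inequality $cP \succeq xx^*$ by testing it against various vectors $z$ and $y$, reducing everything to the scalar inequality $c\,z^*Pz \geq |x^*z|^2$ that it encodes.

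For the range claim, I would first recall that since $P$ is Hermitian, $\mathrm{range}(P) = (\ker P)^{\perp}$. So it suffices to show $x \perp \ker P$. Given any $z \in \ker P$, the hypothesis gives $0 = c\,z^* P z \geq z^* x x^* z = |x^* z|^2$, forcing $x^* z = 0$. Hence $x \in (\ker P)^{\perp} = \mathrm{range}(P)$, which is what we want.

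For the two-sided bound on $\langle x, y \rangle$ when $Py = x$, the key observation is that substituting $Py = x$ into $\langle x, y \rangle = y^* x$ yields $y^* P y$, which is a real nonnegative number because $P \succeq 0$. This handles the lower bound immediately. For the upper bound, I would apply the inequality $c\,y^* P y \geq |x^* y|^2$ coming from $cP \succeq xx^*$. Because $x^* y = y^* P y$ is real and nonnegative, $|x^* y|^2 = (y^* P y)^2$, so the inequality rearranges to $y^* P y \leq c$ (the degenerate case $y^* P y = 0$ forces $Py = 0$ and hence $x = 0$, making the bound trivial). Translating back, $0 \leq \langle x, y \rangle = y^* P y \leq c$.

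I do not anticipate a real obstacle here; the only thing to be careful about is the inner-product convention (linear in the first slot, so $\langle x, y \rangle = y^* x$) and the identification $\langle x, y \rangle = y^* P y$ when $Py = x$, which is what makes the quadratic $c\,(y^*Py) \geq (y^*Py)^2$ reduce so cleanly to $y^*Py \leq c$.
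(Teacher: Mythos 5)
Your proof is correct and takes essentially the same approach as the paper's: both apply the inequality $cP \succeq xx^*$ to kernel vectors to show $x \perp \ker P$, and the second part is verbatim the argument $\langle x,y\rangle^2 = y^*xx^*y \leq cy^*Py = c\langle x,y\rangle$. The only cosmetic difference is that the paper first decomposes $x = v + w$ and tests against the kernel component $w$, while you test against an arbitrary $z \in \ker P$ and invoke $\mathrm{range}(P) = (\ker P)^{\perp}$ directly.
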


\begin{proof}
Decompose $x = v + w$, where $v \in \mathrm{range}P$ and $w \in \mathrm{ker}P$. Then $\langle x, w \rangle = \langle w, w \rangle$, so

\begin{equation}
    ||w||^4 = w^*xx^*w \leq cw^*Pw = 0.
\end{equation}

So $x \in \mathrm{range}P$. 

Now if $x = Py$, then $\langle x, y \rangle = \langle Py, y \rangle \geq 0$. Finally,

\begin{equation}
    \langle x, y \rangle ^2 = y^*xx^*y \leq cy^*Py = c\langle x, y \rangle, 
\end{equation}

so $0 \leq \langle x, y \rangle \leq c$.
\end{proof}

The following theorem describes precisely which functions are included in our RKHS.

\begin{theorem}[\cite{paulsen2016introduction}]\label{thm: includedf}
Let $\mathcal{H}$ be a RKHS over $\mathcal{X}$ with reproducing kernel $K(\cdot, \cdot)$ and let $f: \mathcal{X} \rightarrow \mathbb{C}$ be a function. Then the following are equivalent.

\begin{enumerate}

    \item $f \in \mathcal{H}$.
    
    \item \textrm{There exists } $c > 0$ \textrm{ such that for any } $x_1, \dots, x_n \in \mathcal{X}$, \textrm{ there exists } $h \in \mathcal{H}$ \textrm{ such that } $||h|| \leq c, \quad f(x_i) = h(x_i), \ i = 1,\dots, n$.
    
    \item \textrm{There exists } $c > 0$ \textrm{ such that} 
    
    \begin{equation}
        c^2K(x, y) - f(x) \overline{f(y)}
    \end{equation}
    
    \textrm{is a kernel function.}
\end{enumerate}
\end{theorem}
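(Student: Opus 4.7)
The plan is to establish the cyclic implications $(1) \Rightarrow (3) \Rightarrow (2) \Rightarrow (1)$. The first implication is a short Cauchy--Schwarz calculation. Take $c = \|f\|$, and for any finite choice $x_1,\dots,x_n \in \mathcal{X}$, write $\boldsymbol{\lambda} = (f(x_1),\dots,f(x_n))^T$ and $\mathcal{P}_{ij} = K(x_i,x_j)$. To show $c^2\mathcal{P} - \boldsymbol{\lambda}\boldsymbol{\lambda}^* \succeq 0$, I would fix $\boldsymbol{v} \in \mathbb{C}^n$ and set $g = \sum_i v_i k_{x_i} \in \mathcal{H}$. Unwinding definitions, $\boldsymbol{v}^*\mathcal{P}\boldsymbol{v} = \|g\|^2$ while $\boldsymbol{v}^*\boldsymbol{\lambda}\boldsymbol{\lambda}^*\boldsymbol{v} = |\langle f, g\rangle|^2$; Cauchy--Schwarz in $\mathcal{H}$ then gives $|\langle f, g\rangle|^2 \leq \|f\|^2 \|g\|^2 = c^2 \boldsymbol{v}^*\mathcal{P}\boldsymbol{v}$.

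For $(3) \Rightarrow (2)$, I would apply Lemma \ref{lemma: prange} with $P = \mathcal{P}$ and $x = \boldsymbol{\lambda}$: the hypothesis $c^2\mathcal{P} \succeq \boldsymbol{\lambda}\boldsymbol{\lambda}^*$ places $\boldsymbol{\lambda}$ in $\mathrm{range}(\mathcal{P})$, so there is a $\boldsymbol{y}$ with $\mathcal{P}\boldsymbol{y} = \boldsymbol{\lambda}$ and $\langle \boldsymbol{\lambda}, \boldsymbol{y}\rangle \leq c^2$. Then $h := \sum_i y_i k_{x_i} \in \mathcal{H}$ satisfies $h(x_j) = (\mathcal{P}\boldsymbol{y})_j = f(x_j)$ by the same calculation as in Theorem \ref{thm: interpolation}, while $\|h\|^2 = \boldsymbol{y}^*\mathcal{P}\boldsymbol{y} = \boldsymbol{y}^*\boldsymbol{\lambda} \leq c^2$. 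Lemma \ref{lemma: prange} is precisely what allows this step to avoid any invertibility assumption on $\mathcal{P}$.

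The hard part will be $(2) \Rightarrow (1)$, since one must manufacture a single global element of $\mathcal{H}$ from a family of finite-dimensional interpolants. I would index the collection of finite subsets $F \subset \mathcal{X}$ by inclusion, and for each $F$ use (2) to select $h_F \in \mathcal{H}$ with $\|h_F\| \leq c$ and $h_F|_F = f|_F$. By Banach--Alaoglu, the closed $c$-ball in $\mathcal{H}$ is weakly compact, so the net $\{h_F\}$ admits a weak cluster point $h$ with $\|h\| \leq c$. For any fixed $y \in \mathcal{X}$, the evaluation functional $g \mapsto g(y) = \langle g, k_y\rangle$ is weakly continuous, so $h(y)$ is a weak cluster value of the net of scalars $\{h_F(y)\}$; but eventually $F \supseteq \{y\}$, at which point $h_F(y) = f(y)$ identically. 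Hence $h(y) = f(y)$ for every $y$, and since functions in a RKHS are determined by their pointwise values, $f = h \in \mathcal{H}$.

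The principal obstacle is exactly this last implication: because $\mathcal{X}$ may be uncountable, one cannot extract a subsequence and must work with nets and weak cluster points rather than limits. Once the compactness bookkeeping is arranged correctly, though, the reproducing property upgrades weak convergence in $\mathcal{H}$ to pointwise convergence essentially for free, so no further analytic work is needed beyond citing Banach--Alaoglu.
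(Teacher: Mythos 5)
Your proof is correct, and for steps $(1)\Rightarrow(3)$ and $(3)\Rightarrow(2)$ it is actually tighter than the paper's own exposition: the paper verifies the kernel inequality only against the all-ones vector (you test against arbitrary $\boldsymbol v$, as one must), and it invokes Theorem~\ref{thm: interpolation} to produce the interpolant $h$, which formally requires $\mathcal P$ to be invertible, whereas you correctly let Lemma~\ref{lemma: prange} both place $\boldsymbol\lambda$ in $\mathrm{range}(\mathcal P)$ and bound $\|h\|^2=\boldsymbol y^*\boldsymbol\lambda\le c^2$ without any invertibility assumption. The genuine difference is in $(2)\Rightarrow(1)$. The paper projects each interpolant onto $\mathrm{span}\{k_{x_i}: x_i\in X\}$, sets $M=\sup_X\|g_X\|\le c$, and runs a Pythagorean argument to show the net $\{g_X\}$ is Cauchy \emph{in norm}, after which pointwise convergence follows from the reproducing property. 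You instead appeal to Banach--Alaoglu: the closed $c$-ball is weakly compact, so the net $\{h_F\}$ has a weak cluster point $h$, and weak continuity of the evaluation functionals $\langle\cdot,k_y\rangle$ combined with the fact that $h_F(y)$ is eventually constantly $f(y)$ forces $h=f$ pointwise. Your route is shorter and cleaner, at the cost of invoking weak compactness as a black box; the paper's route is more elementary and yields the stronger conclusion that the projected net actually converges in norm to $f$ (in particular giving $\|f\|\le c$ and the existence of a norm-convergent approximating net of finitely supported elements, facts the weak-cluster-point argument does not directly hand you). Both are standard and both succeed.
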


\begin{proof}
We follow \cite{paulsen2016introduction}. 

\textbf{(1) $\implies$ (3)}
Letting $g = \sum_k k_{x_k}$, we get 

\begin{equation}
    \sum\limits_{i,j=1}^n f(x_j)\overline{f(x_i)} = |\langle f, g \rangle|^2 \leq ||f||^2||g||^2 = ||f||^2 \sum\limits_{i,j=1}^n K(x_i, x_j)
\end{equation}

\textbf{(3)} holds if we let $c = ||f||$.

\textbf{(3) $\implies$ (2)}
Let $\{x_1, \dots x_n\} \subseteq \mathcal{X}$ and let $h = \sum_j \alpha_j k_{x_j}$ such that $h(x_i) = f(x_i)$, for $i = 1, \dots, n$. Note we can do this by Theorem \ref{thm: interpolation}. Denoting $\lambda_i = f(x_i)$, by \ref{lemma: prange}, we get

\begin{equation}
    ||h||^2 = \langle \boldsymbol{\alpha}, \boldsymbol{\lambda}\rangle \leq c ^ 2.
\end{equation}

\textbf{(2) $\implies$ (1)}
We assume that for any finite set $X = \{x_1, \dots x_n\} \subseteq \mathcal{X}$, there exists a function $h_X \in \mathcal{H}$, such that $||h_X|| \leq c$ and $h_X(x_i) = f(x_i)$ for all $x_i \in X$. Let $g_X = P_X(h_X)$ denote the projection of $h_X$ onto the subspace spanned by $\{k_{x_i} \ | \ x_i \in X\}$. Then 

\begin{equation}
    g_X(x_i) = h_X(x_i) = f(x_i) \qquad x_i \in X
\end{equation}

and 

\begin{equation}
    ||g_X|| \leq h_X
\end{equation}

Let $\{g_X\}_{X \in \mathcal{F}_{\mathcal{X}}}$ denote the net of elements $g_X$, where $\mathcal{F}_{\mathcal{X}}$ denotes the set of all subsets of $\mathcal{X}$, let $M = \sup\limits_X ||g_X|| \leq c$, and let $\epsilon > 0$. Now let $X_0$ be an arbitrary finite subset of $\mathcal{X}$ such that $||g_{X_0}|| > M - \frac{\epsilon^2}{8M}$. Then we can choose an arbitrary $X \supseteq X_0$, and let $g_{X_0} = P_{X_0}(g_X)$. Then $\langle g_X - g_{X_0}, g_{X_0} \rangle = 0$. This implies

\begin{equation}
    ||g_X||^2 = ||g_X - g_{X_0}||^2 + ||g_{X_0}||^2,
\end{equation}

i.e.

\begin{multline}
    ||g_X - g_{X_0}||^2 = ||g_X||^2 - ||g_{X_0}||^2 = \\
    = (||g_X|| + ||g_{X_0}||)(||g_X|| - ||g_{X_0}||) = 2M * \frac{\epsilon^2}{2M} = \frac{\epsilon^2}{4}
\end{multline}

Thus, given two sets $X_1, \ X_2$ such that $X_0 \subseteq X_1$ and $X_0 \subseteq X_2$, we get

\begin{equation}
    ||g_{X_2} - g_{X_1}|| < \epsilon
\end{equation}

Therefore, the net (here meaning in the sense of a generalized sequence) is Cauchy and, because any net which converges in norm converges pointwise, we get that the limit 

\begin{equation}
    \lim\limits_{X} \{g_X\}_{X \in \mathcal{F}_{\mathcal{X}}} \rightarrow g = f,
\end{equation}

at all points $x \in \mathcal{X}$, and we are done.

\end{proof}

Thus, in the case where the kernel function is defined by 

\begin{equation}
    K(x, y) = \frac{1}{1 - y^*x}
\end{equation}

with induced RKHS $\mathcal{H}$, we find 

\begin{equation}
    f \in \mathcal{H} \iff \exists \ c > 0, \textrm{ such that } \frac{c^2}{1 - y^*x} - \overline{f(x)}f(y) \geq 0.
\end{equation}



\section{Which Functions Are Included in the Induced RKHS?}\label{sec: w_vs_sb}

Theorem \ref{thm: includedf} tells us precisely which functions can be captured by our RKHS training method. On the other hand, given an activation function $f: \mathbb{C} \rightarrow \mathbb{C}$ and a function $g: \Omega \subseteq \mathbb{C}^{n} \rightarrow \mathbb{C}^{m}$ to be approximated, it is in general complicated to determine whether $g \in \mathcal{H}$, where $\mathcal{H}$ is the RKHS induced by $f$. There are some examples where the problem simplifies considerably. For example, the RKHS induced by the $\omega-$kernel consists of analytic functions on the unit ball, and is strictly contained in the Hardy space on the unit ball $H^2(\mathbb{B}^n)$ \cite{alpay2002schur}. Another noteworthy example is the \textit{Segal-Bargmann} space, which is the RKHS induced by $K(x, y) = e^{y^*x}$ in one dimension. This RKHS consists of entire functions \cite{paulsen2016introduction}. It is easy to generalize this to higher dimensions by simply taking $y^*x$ to be a dot product instead of an inner product. In particular, the power series expansion $\sum\limits_{n=0}^{\infty}\frac{(y^*x)^n}{n!}$, has nonnegative coefficients and is therefore, by Theorem \ref{thm: schurkern}, a kernel function.

Both these spaces have advantages and disadvantages against the other. Entire functions in general tend to be smoother than analytic functions in the unit ball. Moreover, they are defined throughout the complex plane. On the other hand, entire functions are far more restricted in their behavior, as is exemplified by the following, classical theorem.

\begin{theorem}[Jensen-type Result \cite{taojensen}]\label{thm: jensen}
Let $f$ be an entire function, not identically zero, of the form $|f(z)| \leq Ce^{C|z|^{\alpha}}$ for some $C, \alpha > 0$. Then there exists $C' > 0$ such that the number of zeros (counting multiplicity) of $f$ in a disk around zero of radius $R \geq 1$ is at most $C'R^{\alpha}$.
\end{theorem}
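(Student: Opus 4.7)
The plan is to derive the zero-counting bound from Jensen's formula applied at a slightly enlarged radius, using the hypothesized growth bound to dominate the resulting integral.

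First I would reduce to the case $f(0) \neq 0$. If $f$ vanishes to order $m$ at the origin, I replace $f$ by $g(z) = f(z)/z^m$; this is still entire, satisfies $g(0) \neq 0$, and for $|z| \geq 1$ still obeys an estimate of the form $|g(z)| \leq C_1 e^{C_1 |z|^\alpha}$ (since dividing by $|z|^m \geq 1$ only helps on the relevant range, and the zeros of $g$ in $|z| \leq R$ differ from those of $f$ by the fixed constant $m$, which is harmless in the final bound). So I may assume $f(0) \neq 0$ from here on.

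Next I would invoke Jensen's formula on the disk of radius $2R$, which, writing $\{z_j\}$ for the zeros of $f$ in that disk (listed with multiplicity), reads
\begin{equation}
\log|f(0)| + \sum_{|z_j|\leq 2R}\log\frac{2R}{|z_j|} = \frac{1}{2\pi}\int_0^{2\pi}\log|f(2Re^{i\theta})|\,d\theta.
\end{equation}
The growth hypothesis gives the upper bound
\begin{equation}
\frac{1}{2\pi}\int_0^{2\pi}\log|f(2Re^{i\theta})|\,d\theta \leq \log C + C(2R)^\alpha = \log C + 2^\alpha C R^\alpha.
\end{equation}
On the left-hand side, I would discard all zeros with $|z_j| > R$ (each of their contributions $\log(2R/|z_j|)$ is nonnegative) and lower-bound each of the remaining terms by $\log 2$, giving
\begin{equation}
n(R)\log 2 \leq \sum_{|z_j|\leq R}\log\frac{2R}{|z_j|} \leq \sum_{|z_j|\leq 2R}\log\frac{2R}{|z_j|},
\end{equation}
where $n(R)$ denotes the zero count in the closed disk of radius $R$.

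Chaining the three displays together yields
\begin{equation}
n(R)\log 2 \leq -\log|f(0)| + \log C + 2^\alpha C R^\alpha,
\end{equation}
and for $R \geq 1$ the right-hand side is bounded by $C' R^\alpha$ for a suitable constant $C' > 0$ absorbing the fixed additive terms. The main step of the argument is Jensen's formula itself, which I would quote as a standard complex-analytic fact; the only real care needed is the reduction to $f(0) \neq 0$ and the bookkeeping with the enlarged radius $2R$, which is what produces the factor $\log 2$ in the lower bound. No step presents a genuine obstacle.
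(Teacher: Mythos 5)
Your proof is correct and follows the same core strategy as the paper: apply Jensen's formula at radius $2R$, bound the integral on the right by the growth hypothesis, and lower-bound the zero sum on the left by $n(R)\log 2$. The one place you genuinely diverge is the reduction to $f(0)\neq 0$: you divide out $z^m$ where $m$ is the order of vanishing at the origin, note that the modified function still obeys the required growth estimate on $|z|\geq 1$, and observe that the zero count shifts only by the fixed constant $m$. The paper instead says one can ``perturb $f$ by a small amount to make it nonzero at $0$, and adjust our constant $C$ accordingly.'' Your route is the cleaner one: perturbing $f$ changes its zero set, so the paper's remark implicitly requires a Rouch\'e-type argument to transfer the zero count back to $f$, which it does not spell out. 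Dividing by $z^m$ sidesteps that issue entirely and is the standard normalization; it is worth keeping as stated.
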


\begin{proof}
If, $f(0) \neq 0$, Jensen's formula from complex analysis tells us, for any $r > 0$, 

\begin{equation}
    \frac{1}{2\pi}\int_{0}^{2\pi}\log|f(re^{it})|\mathrm{d}t = \log|f(0)| + \sum\limits_{|\rho| \leq r \ ; \ f(\rho) = 0}\log\frac{r}{|\rho|}
\end{equation}

Plugging in $r = 2R$, we get

\begin{equation}
    \int_{0}^{2\pi} \log|Ce^{C(2R)^{\alpha}}| \geq \log|f(0)| + N_R \log2,
\end{equation}
where $N_R$ denotes the number of zeros in a disk around the origin of radius $R$. 

If $f(0) = 0$, then we can perturb $f$ by a small amount to make it nonzero at $0$, and adjust our constant $C$ accordingly.
\end{proof}

The theorem above states that, if $f$ is forced to take on a large number of zeros in some disk about the origin, then $f$ must proportionally increase exponentially (up to a constant $\alpha$). For our purposes, this means that using too many interpolation nodes in the Segal-Bargmann space will cause our function to blow up outside a disk containing the nodes. Thus, any hope of generalizing outside such a disk will be lost. Figures \ref{fig: homo13} and \ref{fig: homo25} exemplify this.

\begin{figure}[H]
\begin{subfigure}{.5\textwidth}
  \centering
  \includegraphics[width=.8\linewidth]{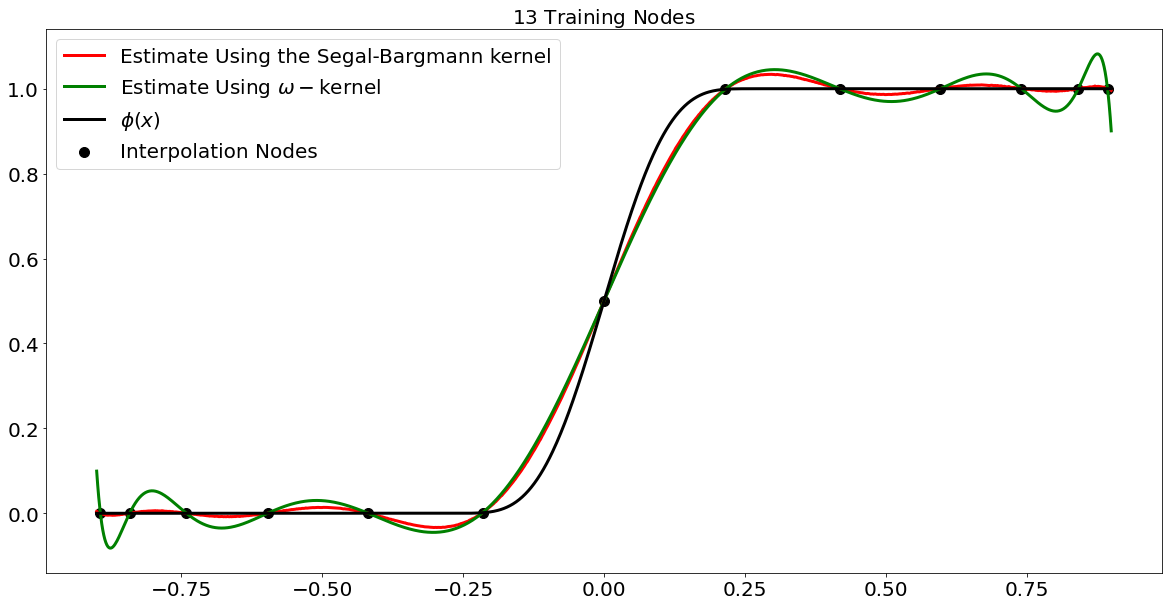}
  \caption{$10$ Training Nodes}
  \label{fig: homo13}
\end{subfigure}%
\begin{subfigure}{.5\textwidth}
  \centering
  \includegraphics[width=.8\linewidth]{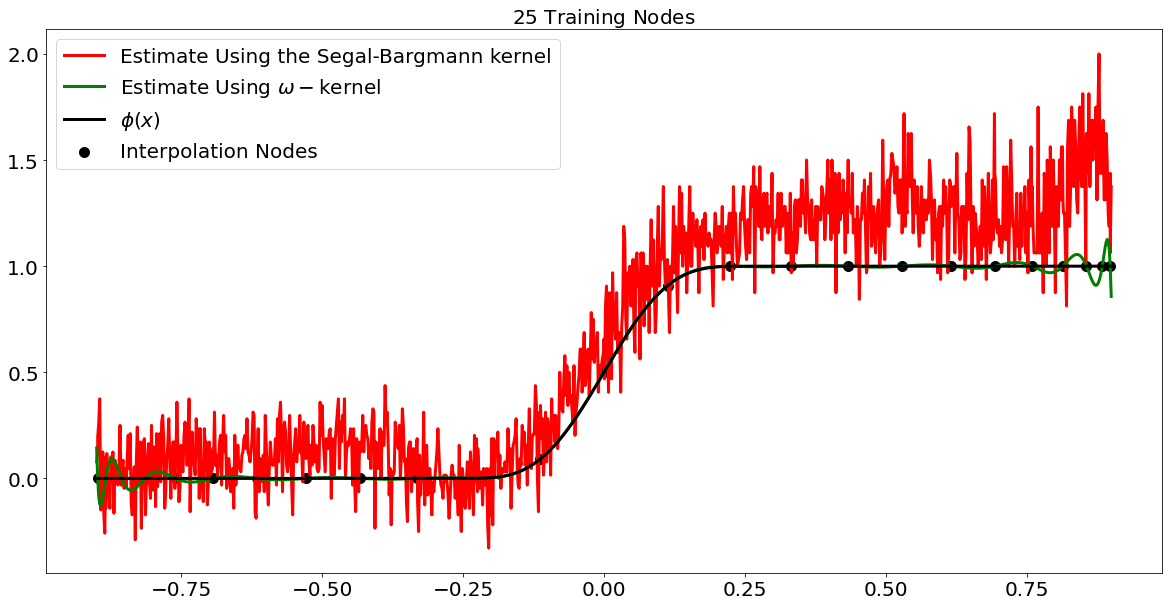}
  \caption{$25$ Training Nodes}
  \label{fig: homo25}
\end{subfigure}
\caption{Interpolating a $C^{\infty}$ function $\phi$ which is not analytic. Note how the increase in training examples forces the Segal-Bargmann interpolant to become unstable, while the $\omega-$kernel interpolant remains tame, and increases in accuracy.}
\label{fig:fig}
\end{figure}

In these figures, we are trying to learn the function

\begin{equation}\label{eq: homotopy}
    \phi(x) = \frac{\lambda (x - \frac{1}{3})}{\lambda (x - \frac{1}{3}) + \lambda (\frac{2}{3} - x)},
\end{equation}

where

\begin{equation}
    \lambda(t) = \begin{cases}
        0, \qquad t \leq 0 \\
        e^{-t^{-1}}, \qquad t > 0
    \end{cases}
\end{equation}

In particular, we perform interpolation using the $\omega-$kernel and the Segal-Bargmann kernel. Our interpolation nodes are Chebyshev, contracted to the interval $[-.9, .9]$ to make sure the $\omega-$kernel interpolant does not blow up. The function $\phi$ has uncountably many zeros in the unit disk and so it should be difficult for the Segal-Bargmann interpolant to remain stable as the number training examples increases. In Figure \ref{fig: homo13}, we see the Segal-Bargmann network is closer to the true curve than the $\omega$-kernel network; it is also smoother and does not thrash as it reaches the boundary of $[-.9, .9]$. However, as the number of nodes increases from $13$ to $25$, the Segal-Bargmann network breaks (Figure \ref{fig: homo25}). This is because it is an entire function, so it must grow extremely fast due to Theorem \ref{thm: jensen}. In addition, the network resulting from interpolation has ill-conditioned weight matrices, relying on catastrophic cancellation in order to interpolate the data. Indeed, even with $13$ interpolation nodes, the Segal-Bargmann interpolant is struggling. Explicitly, the minimum magnitude of the weights $\mathbf{\alpha}$ (in the context of Theorem \ref{thm: interpolation}) of the Segal-Bargmann interpolant is $~10^{11}$. In comparison, the maximum element in $\mathbf{\alpha}$ corresponding to the $\omega-$kernel interpolant is about $16,000$, which is still stable. Note, this analysis is based on the conditioning of the interpolation problem (in the obvious coordinate system), not on the conditioning of the algorithm.

Even without the numerical instability involved in the above example, the Segal-Bargmann interpolant must blow up rather quickly. Indeed, if we expand the $x-$axis in Figure \ref{fig: homo13}, as shown in Figure \ref{fig: homo13extend}, one sees clearly that the interpolant grows (or decreases) exponentially outside a disk containing its interpolation nodes. 

\begin{figure}[H]
    \centering
    \includegraphics[scale=.3]{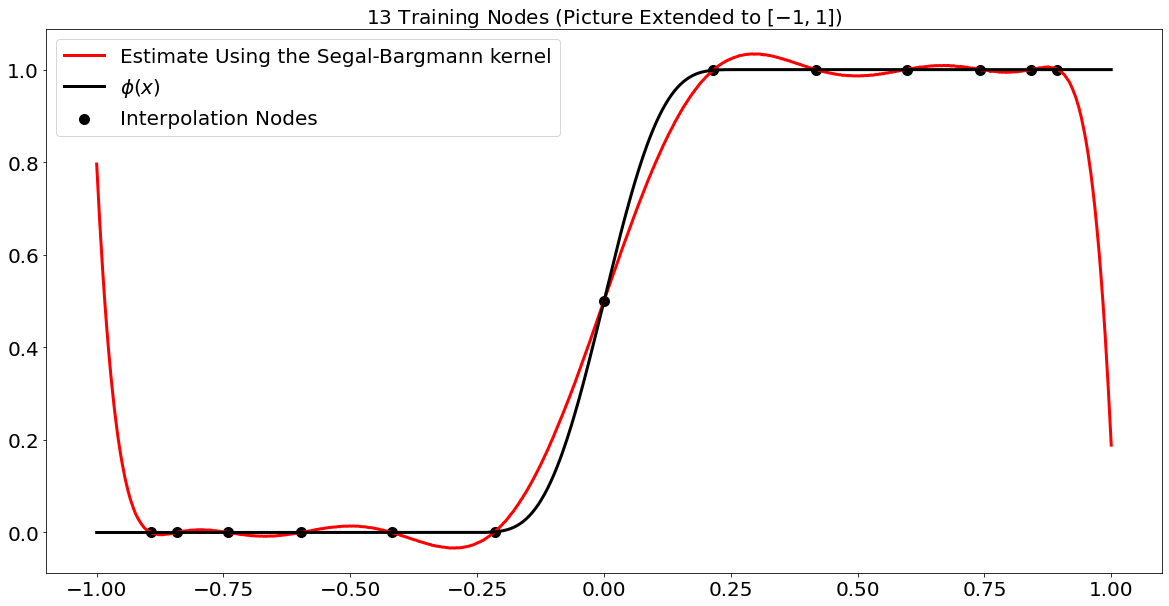}
    \caption{A Segal-Bargmann interpolant, expanded outside a disk containing its interpolation nodes.}
    \label{fig: homo13extend}
\end{figure}

\section{Agler-McCarthy Theorem}\label{sec: agler}

Until now, we have been using the $\omega-$kernel without much motivation for the multivariate case. In this section, we study a universality property for the $\omega-$kernel given by the Agler-McCarthy theorem. This property will be useful when extending our theory to deep learning in Section \ref{sec: deeplearning} and introducing model reduction in Chapter \ref{ch: cs}.

Let $\mathcal{A} = \{(x_1, \lambda_1), (x_2, \lambda_2), \dots, (x_N, \lambda_N)\}$ be interpolation data, and let $\mathcal{H}$ be a RKHS over $\mathcal{X}$ with kernel $K$. 

\begin{defn}
Let $\mathcal{H}$ be a RKHS over $\mathcal{X}$. A \textbf{multiplier of $\mathcal{H}$} is a function $\phi$ on $\mathcal{X}$, such that if $f \in \mathcal{H}$, then $\phi f \in \mathcal{H}$.
\end{defn}

Let $R_{x, \lambda}$ be the operator which sends $k_{x_j} \mapsto \bar{\lambda_j}k_{x_j}$ for all $j$. In order to solve the interpolation problem given by set $\mathcal{A}$, it is necessary that $R_{x, \lambda}$ is a contraction on the span of $\{k_{x_j}\}$. $K$ is a \textit{Nevanlinna-Pick kernel} if this condition on $R_{x, \lambda}$ is also sufficient to solve the interpolation problem.

\begin{remark}
\cite{agler2000complete} calls Nevanlinna-Pick kernels \textit{complete Nevanlinna-Pick kernels}, to distinguish them from another, broader class of kernels, which we will not study here.
\end{remark}

\begin{defn}
Let $\mathcal{H}$ be a RKHS over $\mathcal{X}$ with kernel $K$. $K$ is \textbf{irreducible} if, for all $x, y \in \mathcal{X}$, $K(x, y) \neq 0$.
\end{defn}

If $\mathcal{H}$ is a RKHS over $\mathcal{X}$ whose kernel $K$ is a Nevanlinna-Pick kernel, then $\mathcal{X}$ can be partitioned into disjoint subsets $\mathcal{X}_j$ such that $x, y \in \mathcal{X}_j \implies K(x, y) \neq 0$ and $x \in \mathcal{X}_j, y \in \mathcal{X}_k \implies K(x, y) = 0$ for $j \neq k$ \cite{agler2000complete}.

\begin{theorem}\label{thm: agler}
Let $\mathcal{H}$ be a RKHS over $\mathcal{X}$ with irreducible kernel $K$. $K$ is a Nevanlinna-Pick kernel, if and only if there exists a nowhere vanishing function $\delta(\cdot)$ on $\mathcal{X}$, and an injection $f: \mathcal{X} \rightarrow \mathbb{B}^m$, such that $K$ is of the form

\begin{equation}
    K(x, y) = \frac{\delta(x)\overline{\delta(y)}}{1 - \langle f(x), f(y)\rangle}
\end{equation}

Moreover, denoting $k_y(\cdot) = K(\cdot, y)$, the map $k_x(\cdot) \mapsto \frac{\overline{\delta(y)}}{1 - \overline{f(y)}\cdot}$ extends to an isometric embedding from $\mathcal{H}$ to the RKHS induced by the kernel $K(x, y) = \frac{1}{1 - y^*x}$, multiplied by $\delta$. Finally, if there is a topology on $\mathcal{X}$ such that $K$ is continuous, then $f$ is a continuous embedding of $\mathcal{X}$ into $\mathbb{B}^m$.

\end{theorem}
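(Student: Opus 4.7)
The plan is to handle the two directions of the equivalence separately and then deduce the auxiliary claims. The sufficient direction is a reduction to the classical multi-variable Nevanlinna--Pick theorem on $\mathbb{B}^m$; the necessary direction is the substantive part and follows an Agler--McCarthy-style normalization-and-factorization scheme.

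For sufficiency, suppose $K(x,y) = \delta(x)\overline{\delta(y)}/(1 - \langle f(x), f(y)\rangle)$ and that $R_{x,\lambda}$ is a contraction on $\operatorname{span}\{k_{x_j}\}$. Rescaling the reproducing kernels by the nonvanishing factors $1/\delta(x_j)$ produces an equivalent interpolation problem with nodes $f(x_j) \in \mathbb{B}^m$, rescaled targets $\lambda_j/\delta(x_j)$, and the $\omega$-kernel as underlying kernel. The contraction hypothesis on $R_{x,\lambda}$ transfers exactly to the Pick matrix condition for this $\omega$-kernel problem, so the classical Nevanlinna--Pick theorem on $\mathbb{B}^m$ produces an $\omega$-kernel multiplier solving the rescaled problem; composing this multiplier with $f$ and multiplying by $\delta$ then yields a multiplier of $\mathcal{H}$ solving the original interpolation problem, establishing the Nevanlinna--Pick property of $K$.

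For necessity, I would fix a base point $x_0 \in \mathcal{X}$ (which exists because $K$ is irreducible, so $K(x, x_0) \neq 0$ throughout $\mathcal{X}$) and form the normalized kernel
\begin{equation}
\tilde K(x,y) = \frac{K(x,y)\, K(x_0, x_0)}{K(x, x_0)\, K(x_0, y)},
\end{equation}
which is well defined, nonvanishing, and satisfies $\tilde K(x_0, \cdot) \equiv 1$. The crux is then the equivalence of two statements: that $\tilde K$ is a Nevanlinna--Pick kernel, and that $1 - 1/\tilde K$ is itself a kernel function. Granting this equivalence, a Moore-style construction applied to the positive kernel $1 - 1/\tilde K$ produces functions $g_1, g_2, \dots$ on $\mathcal{X}$ satisfying
\begin{equation}
1 - \frac{1}{\tilde K(x,y)} = \sum_j g_j(x) \overline{g_j(y)},
\end{equation}
and I set $f(x) = (g_1(x), g_2(x), \dots)$, which lands in $\mathbb{B}^m$ (with $m$ possibly infinite) since the sum is bounded by $1$. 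Rearranging yields $\tilde K(x,y) = 1/(1 - \langle f(x), f(y)\rangle)$, and setting $\delta(x) = K(x, x_0)/\sqrt{K(x_0, x_0)}$ recovers the claimed factorization of $K$.

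For the remaining assertions, the isometric embedding reduces to comparing inner products on reproducing kernels: $\langle k_y, k_x\rangle_\mathcal{H} = K(x,y)$ must match the image inner product in the $\omega$-kernel RKHS pulled back by $f$ and scaled by $\delta$, and both sides equal $\delta(x)\overline{\delta(y)}/(1 - \langle f(x), f(y)\rangle)$ by the reproducing property. Injectivity of $f$ follows because $f(x) = f(y)$ with $x \neq y$ would force $k_x$ and $k_y$ to be scalar multiples of each other after normalization by $\delta$, which one can exclude using irreducibility together with the fact that distinct points yield linearly independent normalized reproducing kernels. Continuity of $f$ when $K$ is continuous follows by polarization from the continuity of $\langle f(x), f(y)\rangle = 1 - 1/\tilde K(x,y)$. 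The main obstacle is the equivalence between the Nevanlinna--Pick property of the normalized $\tilde K$ and the positivity of $1 - 1/\tilde K$; this is the technical heart of the theorem, proved in Agler--McCarthy via a commutant-lifting argument on the Drury--Arveson space, and I would invoke it as a black box rather than reprove it from scratch.
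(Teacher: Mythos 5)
The paper does not give its own proof of this result---it simply cites Agler--McCarthy \cite{agler2000complete}---and your sketch correctly reproduces the outline of that cited argument: normalize at a base point, reduce the Nevanlinna--Pick property to positive semidefiniteness of $1 - 1/\tilde K$, factor that positive kernel à la Moore to produce $f$, and recover $\delta$ from the normalizing factor. So you are taking the same route the paper defers to; the only soft spot is the injectivity step, where irreducibility alone does not quite rule out $k_x$ being a scalar multiple of $k_y$---one also needs the standing Agler--McCarthy assumption that the kernel separates points of $\mathcal{X}$---but this is a convention, not a gap in the argument.
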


\begin{proof}
See \cite{agler2000complete}.
\end{proof}

This theorem states that, up to scaling (by $\delta$), the collection of functions in the RKHS induced by the $\omega-$kernel is the most expressive set one can use, if one requires a complete Nevanlinna-Pick kernel. Note that the function $f$ included above is simply a preprocessing of our data. In Section \ref{sec: cs}, we will see that complete Nevanlinna-Pick kernels are particularly attractive, so the Agler-McCarthy theorem tells us that the $\omega-$kernel is optimal in terms of expressivity.

\begin{remark}
In practice, the universal property of the $\omega-$kernel can lead to some issues, in that the space of functions included in the RKHS induced by the $\omega-$kernel can be \enquote{too big}. In particular, the functions one obtains via interpolation can thrash, or simply perform poorly under norms which are regularly used in practice (e.g. $L^2, L^{\infty}$). In other words, there is a trade-off between expressivity and generalization. Therefore, it will sometimes be helpful to restrict ourselves to smaller classes of functions.
\end{remark}

\chapter{Reproducing Kernel Krein Spaces}\label{ch: rkks}
We introduce a generalization of RKHS, called \textit{reproducing kernel Krein spaces} (RKKS). It will be helpful to carry some simple examples along during this chapter, and we shall confine ourselves to using \textit{softplus} ($f(z) = \log(1 + e^z)$) and the hyperbolic tangent, $\tanh(z)$ as such.

We begin by writing out the formal power series expansions for softplus and tanh:

\begin{equation}
    \log(1 + e^z) = \log2 + \frac{1}{2}z + \frac{1}{8}z^2 - \frac{1}{192}z^4 + O(z^6)
\end{equation}

\begin{equation}
    \tanh(z) = \sum\limits_{n=1}^{\infty}\frac{2^{2n}(2^{2n} - 1)B_{2n}}{2n!}z^{2n - 1} = z - \frac{1}{3}z^3 + \frac{2}{15}z^5 + O(z^7),
\end{equation}
where $B_k$ are the Bernoulli numbers. Now from Theorem \ref{thm: schurkern}, we know if the power series coefficients of an activation function are nonnegative (and decay sufficiently quickly in the domain of the activation function), then that activation induces a reproducing kernel Hilbert space, as outlined in the previous sections. Here, however, the power series can include negative coefficients, which are accomodated by the generalization to RKKS.

\begin{defn}
    A \textbf{Krein space} is a (complex) vector space $\mathcal{K}$ with Hermitian form $[\cdot, \cdot]$, which admits a decomposition 

    \begin{equation}
        \mathcal{K} = \mathcal{K}_+ \oplus \mathcal{K}_-,
    \end{equation}
    where 
    
    \begin{itemize}
        \item $\mathcal{K}_+$ (resp., $\mathcal{K}_-$), when endowed with the form, $[\cdot, \cdot]$ (resp., $-[\cdot, \cdot]$), is a Hilbert space. 
        \item $\mathcal{K}_+ \cap \mathcal{K}_- = 0$ and $[k_+, k_-] = 0$, for $k_+ \in \mathcal{K}_+$, $k_- \in \mathcal{K}_-$.
    \end{itemize}
If $[f, g] = 0$, we say $f$ is \textbf{orthogonal in the Krein sense} to $g$.
\end{defn}

\begin{defn}
Given $\mathcal{X} \subseteq \mathbb{C}^n$, a \textbf{reproducing kernel Krein space} $\mathcal{K}$ is a Krein space of functions on $\mathcal{X}$ equipped with Hermitian form $[\cdot, \cdot]$ and a function $k: \mathcal{X} \times \mathcal{X} \rightarrow \mathbb{C}$, such that, for any $z \in \mathcal{X}$, 

\begin{itemize}
    \item $k_z \in \mathcal{K}$, where $k_z: y \mapsto k_z(y)$
    \item $\forall f \in \mathcal{K}, f(z) = [f, k_z]$
\end{itemize}

We call $K(x, y) = k_y(x)$ the \textit{reproducing kernel} for $\mathcal{K}$.
\end{defn}

Below, we state, without proof, a theorem of Schwartz \cite{schwartz1964sous}. See also \cite{alpay1991some}:

\begin{theorem}[Schwartz]\label{thm: schwartz}
$K$ is a reproducing kernel for some reproducing kernel Krein space if $K = K_+ + K_-$, where $K_+$ and $-K_-$ are positive definite in their arguments. Similarly, if $\mathcal{K}$ is a reproducing kernel Krein space with reproducing kernel $K$, then $K$ must admit a decomposition $K = K_+ + K_-$, where again, $K_+$ and $-K_-$ are positive definite. Moreover, given a reproducing kernel Krein space $\mathcal{K}$ with kernel $K = K_+ + K_-$, then without loss of generality, we can assume $\mathcal{H}(K_+) \cap \mathcal{H}(K_-) = \{0\}$. Here, $\mathcal{H}(\cdot)$ represents the reproducing kernel Hilbert space induced by kernel $\cdot$. With this representation, $\mathcal{K}$ consists of functions of the form $f = f_+ + f_-$, where $f_+ \in \mathcal{H}(K_+)$ and $f_- \in \mathcal{H}(K_-)$.
\end{theorem}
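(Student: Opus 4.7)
The plan is to establish each of the four assertions in turn, leveraging Moore's theorem (already proved in the excerpt) to move freely between positive kernels and their induced RKHSs.

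First, for the sufficiency direction, suppose $K = K_+ + K_-$ with $K_+$ and $-K_-$ positive definite. By Moore's theorem I obtain RKHSs $\mathcal{H}_+ := \mathcal{H}(K_+)$ and $\mathcal{H}_- := \mathcal{H}(-K_-)$. I would then form the vector-space direct sum $\mathcal{K} := \mathcal{H}_+ \oplus \mathcal{H}_-$ and equip it with the Hermitian form
\begin{equation}
[f_+ + f_-,\, g_+ + g_-] := \langle f_+, g_+\rangle_{\mathcal{H}_+} - \langle f_-, g_-\rangle_{\mathcal{H}_-}.
\end{equation}
The Krein-space axioms are immediate from the construction. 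To check the reproducing property, write $k_y := K_+(\cdot,y) + K_-(\cdot,y)$, so that $K_+(\cdot,y)\in\mathcal{H}_+$ and $-K_-(\cdot,y)\in\mathcal{H}_-$, and compute $[f,k_y]$ using the two reproducing kernel identities in $\mathcal{H}_+$ and $\mathcal{H}_-$; the signs from the form cancel the sign coming from writing $K_-$ as $-(-K_-)$, yielding $[f,k_y] = f_+(y)+f_-(y) = f(y)$.

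Second, for necessity, given an RKKS $\mathcal{K}$ with fundamental decomposition $\mathcal{K}=\mathcal{K}_+\oplus\mathcal{K}_-$, let $P_\pm$ denote the projections onto $\mathcal{K}_\pm$ and set
\begin{equation}
K_+(x,y) := [P_+ k_y, P_+ k_x], \qquad K_-(x,y) := [P_- k_y, P_- k_x].
\end{equation}
Using that $[\cdot,\cdot]$ restricted to $\mathcal{K}_+$ (resp.\ $-[\cdot,\cdot]$ on $\mathcal{K}_-$) is a Hilbert inner product, a standard Gram-matrix argument shows $K_+$ and $-K_-$ are positive definite, and splitting the reproducing identity $f(y) = [f, k_y]$ according to the decomposition of both $f$ and $k_y$ gives $K = K_+ + K_-$.

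Third, for the disjointness claim, the main subtlety is that the pair $(K_+,K_-)$ is not unique: adding the same positive kernel $L$ to both $K_+$ and $-K_-$ leaves $K$ unchanged but may introduce overlap. If $\mathcal{H}(K_+)\cap\mathcal{H}(K_-)$ contains a nonzero function, I would quotient out the common part: decompose each $\mathcal{H}(K_\pm)$ into the intersection and its orthogonal complement, and show that in the resulting direct sum the pieces indexed by the intersection cancel under the Krein form (because they enter with opposite signs), allowing a new decomposition $K = \widetilde K_+ + \widetilde K_-$ with $\mathcal{H}(\widetilde K_+)\cap\mathcal{H}(\widetilde K_-) = \{0\}$. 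I expect this to be the hardest step, since it is really where the non-uniqueness of fundamental decompositions is being tamed, and it requires carefully tracking how kernels behave under orthogonal projections rather than under mere set operations.

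Finally, for the representation of elements, once the intersection is trivial the direct-sum structure at the level of Hilbert spaces lifts cleanly: every $f\in\mathcal{K}$ is expressible as $f_+ + f_-$ with $f_\pm \in \mathcal{H}(K_\pm)$, and uniqueness of this decomposition follows from $\mathcal{H}(K_+)\cap\mathcal{H}(K_-)=\{0\}$. Density of the span of reproducing kernels in each component (proved in the excerpt) together with the reproducing property on $K$ shows that no functions outside $\mathcal{H}(K_+) + \mathcal{H}(K_-)$ can belong to $\mathcal{K}$, completing the characterization.
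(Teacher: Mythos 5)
The paper states this theorem without proof, deferring to Schwartz (1964) and Alpay (1991), so there is no in-paper argument to compare against. On its own merits, your sketch handles necessity correctly: the choices $K_\pm(x,y) = [P_\pm k_y, P_\pm k_x]$ give the required positive and negative parts, and splitting $f(y) = [f, k_y]$ under a fundamental decomposition yields $K = K_+ + K_-$. The cleanest route to the WLOG and representation claims actually falls out of this same decomposition, more directly than your quotienting scheme: since $[f, P_+ k_y] = [f, k_y] = f(y)$ for $f \in \mathcal{K}_+$, the vector $P_+ k_y$ is the reproducing kernel of $\mathcal{K}_+$ at $y$, so by the Moore bijection $\mathcal{K}_+ = \mathcal{H}(K_+)$ as function spaces (likewise $\mathcal{K}_- = \mathcal{H}(-K_-)$); disjointness of $\mathcal{K}_+$ and $\mathcal{K}_-$ inside $\mathcal{K}$ then gives $\mathcal{H}(K_+) \cap \mathcal{H}(-K_-) = \{0\}$ and the representation $f = f_+ + f_-$ for free.

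The genuine gap is in your sufficiency argument. You form the external direct sum $\mathcal{H}_+ \oplus \mathcal{H}_-$ and assert that the Krein-space axioms are immediate, but a reproducing kernel Krein space must be a space of \emph{functions} on $\mathcal{X}$, and the only available identification is the sum map $\Phi(f_+, f_-) = f_+ + f_-$. When $\mathcal{H}(K_+) \cap \mathcal{H}(-K_-) \ne \{0\}$ this map is not injective; its kernel is $N = \{(h, -h) : h \in \mathcal{H}(K_+) \cap \mathcal{H}(-K_-)\}$, and under your form
\begin{equation}
[(h,-h),(g,-g)] = \langle h, g\rangle_{\mathcal{H}_+} - \langle h, g\rangle_{\mathcal{H}_-},
\end{equation}
which is nonzero in general. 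Hence $N$ is not even a neutral subspace, so the form does not descend to the quotient $\bigl(\mathcal{H}_+ \oplus \mathcal{H}_-\bigr)/N$ and you cannot extract a Krein space of functions this way. Concretely, take $K_+ = -K_- = K_0 \ne 0$: then $K \equiv 0$, whose only RKKS is $\{0\}$, yet your direct sum is far from trivial. Your proposed fix --- decompose each $\mathcal{H}(K_\pm)$ into the intersection and its orthogonal complement --- also does not go through, because the overlap $\mathcal{H}(K_+) \cap \mathcal{H}(-K_-)$ need not be a closed subspace of either Hilbert space, so there is no orthogonal complement to take. Sufficiency is the genuinely hard direction of Schwartz's theorem; the standard treatments (see the Schwartz and Alpay references the paper cites) work inside the majorant RKHS $\mathcal{H}\bigl(K_+ + (-K_-)\bigr)$ and build a fundamental symmetry there, rather than forming a naive direct sum.
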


For more information about reproducing kernel Krein spaces, \cite{alpay1991some} is an excellent resource. 

We now show that the hyperbolic tangent function has power series coefficients which decay sufficiently quickly, so as to apply Theorem \ref{thm: schwartz}. In particular, the resulting power series coefficients decay fast enough for one to take the terms with positive coefficients and sum them to get a positive definite function. Similarly, one may take the sum of the terms with negative coefficients to obtain a negative definite function. Considering 

\begin{equation}
    \log(1 + e^z) + C = \int \frac{1}{2} + \frac{1}{2}\tanh(\frac{z}{2}) dz,
\end{equation}
the result given will also hold for softplus.

\begin{lemma}
$\tanh(z)$ has an absolutely convergent Taylor series with radius of convergence $\frac{\pi}{2}$.
\end{lemma}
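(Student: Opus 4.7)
The plan is to exploit the fact that $\tanh$ is meromorphic on $\mathbb{C}$ and locate its nearest singularity to the origin. Writing $\tanh(z) = \sinh(z)/\cosh(z)$, the function is holomorphic wherever $\cosh(z) \neq 0$. Because $\sinh$ and $\cosh$ have no common zeros (their squares differ by $1$), the singularities of $\tanh$ are precisely the zeros of $\cosh$.

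Next I would compute these zeros explicitly. Since $\cosh(z) = \tfrac{1}{2}(e^{z} + e^{-z})$, the equation $\cosh(z) = 0$ is equivalent to $e^{2z} = -1$, whose solutions are $z = i\pi/2 + i\pi k$ for $k \in \mathbb{Z}$. Of these, the points closest to the origin are $\pm i\pi/2$, each at distance exactly $\pi/2$. Hence $\tanh$ is holomorphic on the open disk $\{|z| < \pi/2\}$ and fails to extend holomorphically to any strictly larger disk centered at the origin.

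I would then invoke the standard fact from complex analysis that the radius of convergence of the Taylor series at $0$ of a function holomorphic on a neighborhood of $0$ equals the distance from $0$ to the nearest singularity (this follows from the Cauchy integral formula applied on circles of radius $r < \pi/2$, together with the observation that a power series with radius of convergence strictly greater than $\pi/2$ would force $\tanh$ to be holomorphic on a larger disk, contradicting the pole at $\pm i\pi/2$). This gives radius of convergence exactly $\pi/2$. Finally, since a complex power series converges absolutely at every point strictly inside its disk of convergence, absolute convergence throughout $\{|z| < \pi/2\}$ is immediate.

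There is no real obstacle here; the only care needed is to make sure one distinguishes between the radius of convergence (which is $\pi/2$) and the behavior on the boundary circle $|z| = \pi/2$ (which is not claimed). The argument uses only elementary complex analysis and no deeper machinery than the location of the zeros of $\cosh$.
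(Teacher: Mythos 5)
Your argument is correct, and it takes a genuinely different route from the paper's. The paper does not reason about singularities at all; it starts from the Taylor coefficient formula $\tanh(z)=\sum_{n\geq 1}\frac{2^{2n}(2^{2n}-1)B_{2n}}{(2n)!}z^{2n-1}$, substitutes the classical identity $B_{2n}=(-1)^{n-1}\frac{2(2n)!}{(2\pi)^{2n}}\zeta(2n)$, and then observes that the resulting coefficient magnitudes behave like $\frac{2(2^{2n}-1)}{\pi^{2n}}\zeta(2n)$ with $\zeta(2n)$ bounded, so the series converges for $|z|<\pi/2$. The two routes buy different things: the paper's computation exhibits the coefficients explicitly (and feeds naturally into its later manipulation of the power series when it flips signs to build the associated Hilbert kernel $\tan$), while your singularity argument is shorter and, notably, pins down the radius of convergence as \emph{exactly} $\pi/2$ in one stroke. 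As written, the paper's estimate only establishes convergence on $|z|<\pi/2$ (an upper bound on coefficient growth gives a lower bound on the radius); to conclude the radius is not larger one would also need the lower bound $\zeta(2n)\geq 1$, which the paper leaves implicit. Your approach sidesteps that gap entirely: the pole of $\tanh$ at $\pm i\pi/2$ forces the disk of convergence to stop there. Both are legitimate; yours is the more self-contained proof of the precise statement as worded.
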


\begin{proof}
The even Bernoulli numbers satisfy

\begin{equation}
    B_{2n} = (-1)^{n - 1}\frac{2 (2n)!}{(2\pi)^{2n}}\zeta(2n)
\end{equation}

Therefore,

\begin{equation}
    \tanh(z) = 2\sum\limits_{n=0}^{\infty}\frac{2^{2n} - 1}{\pi^{2n}}\zeta(2n)
\end{equation}

Because $\zeta(\cdot)$ is bounded on the nonnegative reals, result follows.
\end{proof}

\begin{defn}
Given a RKKS $\mathcal{K}$ with bilinear form $[, ]$, the \textbf{Krein quadratic form} of a function $f \in \mathcal{K}$ is 

\begin{equation}
    [f, f]
\end{equation}

\end{defn}

\begin{remark}
\textbf{The square root of a Krein quadratic form is generally not a norm.}
\end{remark}

\section{The Associated Hilbert Space Theorem}
\begin{defn}
Let 

\begin{equation}
    K(x, y) = \sum\limits_{j=0}^{\infty}a_j (y^*x)^j
\end{equation}
be a kernel function for a Krein space $\mathcal{K}$. Then the \textbf{associated Hilbert kernel of $K$}, denoted $\overset{\mathcal{H}}{\mathcal{K}}$ is given by

\begin{equation}
    \overset{\mathcal{H}}{K}(x, y) = \sum\limits_{j=0}^{\infty}|a_j| (y^*x)^j
\end{equation}
The RKHS induced by $\overset{\mathcal{H}}{\mathcal{K}}$ is called the \textbf{associated Hilbert space of $\mathcal{K}$}.
\end{defn}

Note the associated Hilbert space of a RKKS and its kernel are unique.

The concept of an associated Hilbert space is of fundamental value because of the following theorem:

\begin{theorem}[Associated Hilbert Space]\label{thm: assochilb}
Let $f$ have an absolutely summable power series expansion

\begin{equation}
    f(z) = \sum\limits_{j=0}^{\infty}a_jz^j
\end{equation}
and consider a reproducing kernel $K: \mathcal{X} \times \mathcal{X} \rightarrow \mathbb{C}$ such that

\begin{equation}
    K(x, y) = f(y^*x)
\end{equation}
Denote $\mathcal{K}$ as the Krein space corresponding to reproducing kernel $K$. Let $\{c_k\}_{k=0}^{\infty}$ be a sequences of numbers in $\{-1, 1\}$. Then the Krein space induced by 

\begin{equation}
    K(x, y) = \sum\limits_{j=0}^{\infty}c_ja_jz^j
\end{equation}
contains the same functions as the original Krein space $\mathcal{K}$.

\end{theorem}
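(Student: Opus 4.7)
The plan is to characterize both Krein spaces, as sets of functions, in a form that is visibly independent of the sign sequence $\{c_k\}$. Specifically, I will show that each space equals a sum $\sum_{j \geq 0} \mathcal{H}_j$ of reproducing kernel Hilbert spaces associated to the monomial kernels $M_j(x,y) = (y^*x)^j$, with an admissibility condition depending only on the magnitudes $|a_j|$. Since sign flips preserve magnitudes, the two spaces will coincide as sets of functions.

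First I would apply Schwartz's theorem (Theorem \ref{thm: schwartz}) to split $K = K_+ + K_-$, where $K_+(x,y) = \sum_{a_j > 0} a_j (y^*x)^j$ and $K_-(x,y) = \sum_{a_j < 0} a_j (y^*x)^j$. The hypothesis that $\sum a_j z^j$ is absolutely convergent guarantees that the two sub-series defining $K_+$ and $-K_-$ are each absolutely convergent with nonnegative coefficients, so Theorem \ref{thm: schurkern} shows that both $K_+$ and $-K_-$ are positive kernels. Schwartz's theorem then identifies $\mathcal{K}$, as a set, with $\mathcal{H}(K_+) + \mathcal{H}(-K_-)$. Running the same decomposition on the modified kernel $K'(x,y) = \sum c_j a_j (y^*x)^j$ yields $\mathcal{K}' = \mathcal{H}(K'_+) + \mathcal{H}(-K'_-)$ by the same argument, where $K'_\pm$ are defined with respect to the signs of $c_j a_j$.

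Next I would invoke Aronszajn's theorem on sums of positive kernels: for positive kernels $P_1, P_2$ on a common set, $\mathcal{H}(P_1 + P_2) = \{f_1 + f_2 : f_i \in \mathcal{H}(P_i)\}$ with norm $\|f\|^2 = \inf\{\|f_1\|^2 + \|f_2\|^2 : f = f_1 + f_2\}$. Iterating this into its countable-sum form, one obtains, for any index set $S$ and positive scalars $\{b_j\}_{j \in S}$ making $\sum_{j \in S} b_j M_j$ convergent,
\begin{equation*}
\mathcal{H}\!\Big(\sum_{j \in S} b_j M_j\Big) \;=\; \Big\{\sum_{j \in S} f_j \,:\, f_j \in \mathcal{H}(M_j),\ \sum_{j \in S} b_j^{-1}\|f_j\|_{\mathcal{H}(M_j)}^{\,2} < \infty\Big\}.
\end{equation*}
Applying this within $\mathcal{H}(K_+)$ and $\mathcal{H}(-K_-)$ and then forming their sum characterizes $\mathcal{K}$ precisely as the set of $f = \sum_j f_j$ with $f_j \in \mathcal{H}(M_j)$ and $\sum_j |a_j|^{-1}\|f_j\|_{\mathcal{H}(M_j)}^{\,2} < \infty$. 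Since each $M_j$ is a homogeneous kernel of degree $j$, the $\mathcal{H}(M_j)$ are mutually disjoint (modulo $\{0\}$), so the expansion $f = \sum_j f_j$ is unambiguous. Running the identical analysis for $\mathcal{K}'$ gives the same characterization, because $|c_j a_j| = |a_j|$. Hence $\mathcal{K}$ and $\mathcal{K}'$ contain the same functions.

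The main obstacle will be justifying the countable-sum extension of Aronszajn's result with enough care to ensure pointwise absolute convergence of the series $\sum_j f_j$ representing a given element; this is where the absolute summability hypothesis on the power series is used, by dominating monomial-norms against $\sum |a_j| (y^*x)^j$ on compacta. Once this monomial decomposition is set up, the conclusion is immediate: the admissibility condition $\sum_j |a_j|^{-1}\|f_j\|^{2}<\infty$ is manifestly invariant under any sign sequence $\{c_j\} \subset \{-1,1\}$.
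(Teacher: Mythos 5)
Your approach is essentially the same as the paper's: both proofs rest on Schwartz's decomposition of the Krein kernel into $K_+ + K_-$ followed by Aronszajn's theorem for sums of positive kernels. Where the paper stops at the level of $\mathcal{H}(K_+) + \mathcal{H}(K_-)$ and observes that this description coincides with that of the associated Hilbert space, you push the Aronszajn decomposition one step further, all the way down to the individual monomial kernels $M_j(x,y)=(y^*x)^j$, arriving at an explicit admissibility condition $\sum_j |a_j|^{-1}\|f_j\|^2_{\mathcal{H}(M_j)}<\infty$ that is manifestly sign-independent. This extra granularity does buy you something: the paper's proof implicitly relies on re-running the Schwartz--Aronszajn argument for each sign sequence $\{c_j\}$ (since the $K_\pm'$ for a modified sign pattern are generally different subsets of monomials than $K_\pm$), and then matching both against the associated Hilbert space; your monomial-level characterization folds that comparison into a single statement and avoids any need to track which monomials land in the positive versus negative part under a given sign pattern. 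The cost, which you correctly flag, is that you must justify the countable-sum version of Aronszajn's theorem and the well-posedness of the representation $f=\sum_j f_j$, which the paper sidesteps by working only with the two-term split. Both routes are sound; yours is a somewhat more careful rendering of the same idea, and the observation that the $\mathcal{H}(M_j)$ are pairwise disjoint spaces of homogeneous polynomials of distinct degree (so the decomposition is unambiguous) is exactly the right justification.
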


\begin{proof}
Because $f$ has an absolutely summable set of power series coefficients, we can re-arrange them freely. Specifically, we can group the terms with positive coefficients together and the terms with negative coefficients together to form two kernel functions $K_+, K_-$, respectively. Now, by Theorem \ref{thm: schwartz}, each $f \in \mathcal{K}$ is of the form $f_+ + f_-$, where $f_+ \in \mathcal{H}(K_+)$ and $f_- \in \mathcal{H}(K_-)$. Consider the function 

\begin{equation}
    g(z) = \sum\limits_{j=0}^{\infty}|a_j|z^j
\end{equation}
This function yields a non-negative semidefinite kernel

\begin{equation}
    K(x, y) = \sum\limits_{j=0}^{\infty}|a_j|(y^*x)^j
\end{equation}
which induces a RKHS all of whose functions are of the form $f_+ + f_-$, where $f_+ \in \mathcal{H}(K_+)$ and $f_- \in \mathcal{H}(K_-)$. Therefore the factors $c_j$ in a kernel function of the form $K(x, y) = \sum\limits_{j=0}^{\infty}c_ja_j(y^*x)^j$ are irrelevant to the functions included in the corresponding Krein space.
\end{proof}

Theorem \ref{thm: assochilb} is important because it will allow a seamless way to examine the expressivity of Krein kernels via the theory of RKHS.

\begin{proposition}\label{prop: kreincounter}
Given a Krein space $\mathcal{K}$ over $\mathcal{X}$ with kernel $K$, the interpolation of Theorem \ref{thm: interpolation} does not in general result in an interpolant of minimal Krein quadratic form. Moreover, the resulting interpolant is not in general of minimum Hilbert space norm in the associated Hilbert space  of $\mathcal{K}$.
\end{proposition}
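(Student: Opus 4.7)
The plan is to prove both halves of the proposition by a single explicit counterexample built from the simplest Krein kernel having a genuinely indefinite Schwartz decomposition. Let $\mathcal{X}\subseteq\mathbb{C}$ be an open neighbourhood of the origin, and set
\[
K(x,y)=x\overline{y}-(x\overline{y})^{3}.
\]
By Theorem \ref{thm: schwartz}, $K=K_{+}+K_{-}$ with $K_{+}(x,y)=x\overline{y}$ positive definite and $-K_{-}(x,y)=(x\overline{y})^{3}$ positive definite, so the ambient RKKS $\mathcal{K}$ is the two-dimensional space of polynomials $ax+bx^{3}$ equipped with Krein form $[ax+bx^{3},\,a'x+b'x^{3}]=\overline{a'}a-\overline{b'}b$. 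The associated Hilbert kernel is $\overset{\mathcal{H}}{K}(x,y)=x\overline{y}+(x\overline{y})^{3}$, which yields the \emph{same} underlying vector space but with the positive-definite inner product $\overline{a'}a+\overline{b'}b$.

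To refute minimality of the Krein quadratic form, I would interpolate a single datum $\lambda_{0}=1$ at a node $x_{0}\in(0,1)$, so that Theorem \ref{thm: interpolation} returns $f_{*}=K(x_{0},x_{0})^{-1}k_{x_{0}}$. The key move is to exhibit the witness
\[
g(x):=-x_{0}^{2}x+x^{3}\in\mathcal{K},
\]
which vanishes at $x_{0}$ and satisfies $[g,g]=x_{0}^{4}-1<0$. The reproducing property forces $[f_{*},g]=0$, hence $[f_{*}+tg,\,f_{*}+tg]=[f_{*},f_{*}]+t^{2}[g,g]\to-\infty$ as $t\to\infty$. Thus not only is $f_{*}$ not a minimizer of the Krein quadratic form — no minimizer exists at all.

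For the second half of the proposition, I would observe that the minimum Hilbert-space-norm interpolant inside the associated Hilbert space is produced by applying Theorem \ref{thm: interpolation} with $\overset{\mathcal{H}}{K}$ in place of $K$. Expanding the two resulting expressions, the $x^{3}$-coefficient of the Krein interpolant $f_{*}$ is $-x_{0}/(1-x_{0}^{4})$, whereas that of the associated Hilbert space interpolant is $+x_{0}/(1+x_{0}^{4})$. These have opposite signs for $x_{0}\in(0,1)$, so the two interpolating functions are genuinely distinct elements of the common vector space $\{ax+bx^{3}\}$, which establishes the second assertion.

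The only real obstacle is arranging for the existence of a witness $g$ that is Krein-orthogonal to $k_{x_{0}}$ but has strictly negative Krein form; this is precisely what forces the RKKS to carry a true negative-definite component in its Schwartz decomposition, which is why the cubic term $-(x\overline{y})^{3}$ is built into the kernel. Once such a $g$ is in hand the remaining verifications are mechanical, and the same construction transparently scales to any finite set of interpolation nodes by enlarging the negative-definite piece of $K$.
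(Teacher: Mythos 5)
Your proof is correct and takes a genuinely different route from the paper. The paper splits the two assertions across two separate, unrelated examples: for the first it uses the degenerate kernel $-K_+(x,y)$ (so the Krein form is $-\|\cdot\|_+^2$, and the interpolant of Theorem~\ref{thm: interpolation} is a form \emph{maximizer}), and for the second it uses the softplus kernel $K(x,y)=\log(1+e^{y^{*}x})$, computing both interpolants explicitly and noting they differ. You instead build a single, finite-dimensional polynomial kernel $K(x,y)=x\overline{y}-(x\overline{y})^{3}$ that handles both claims at once, which is cleaner for a few reasons. Your Krein space is honestly two-dimensional with a nondegenerate indefinite decomposition (unlike the paper's first example, whose positive part is trivial), so it shows the failure is not an artifact of having a purely negative kernel. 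Your Krein-orthogonal witness $g$ isolates the structural reason for the failure and yields a stronger conclusion — the quadratic form is unbounded below on the affine interpolation space, so no minimizer exists at all — which the paper's phrasing only implies obliquely. And the second half of your argument avoids the transcendental arithmetic of softplus entirely; comparing the $x^{3}$-coefficients $-x_{0}/(1-x_{0}^{4})$ versus $x_{0}/(1+x_{0}^{4})$ is both simpler and immune to the numerical slip present in the paper's softplus computation. The trade-off is that the paper's softplus choice ties back to its running examples, whereas your kernel is purpose-built; but as a proof of the proposition, yours is tighter.
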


\begin{proof}
For the first claim, consider the counterexample of using a strictly negative kernel function $-K_+(x, y)$ as one's kernel for a RKKS. Then the interpolant resulting from Theorem \ref{thm: interpolation} results in a maximum quadratic form interpolant.

For the second claim, consider using softplus $K(x, y) = \log(1 + e^{y^*x})$ as the kernel for a RKKS. The associated Hilbert kernel is given by 

\begin{equation}
    \overset{\mathcal{H}}{K}(x, y) = -\log(1 + e^{iz}) + \log4 + \frac{1}{2}(1 + i)z,
\end{equation}
which has power series coefficients equal to the absolute values of the coefficients of softplus.

Consider the interpolation point $\{(1, 1)\}$, i.e. we want a function $f$ such that $f(1) = 1$. Then, using the softplus kernel in the algorithm of Theorem \ref{thm: interpolation}, we get

\begin{equation}
    f(z) = \frac{\log(1 + e^z)}{\log(1 + e)}
\end{equation}
Moreover, $[f, f] = (\frac{1}{\log(1 + e)})^*(1) = \frac{1}{\log(1 + e)} \approx 1.75$, where$[\cdot, \cdot]$ denotes the Krein quadratic form.

On the other hand, using the associated Hilbert kernel, we obtain the minimum norm interpolant

\begin{equation}
    g(z) = \frac{-\log(1 + e^{iz}) + \log4 + \frac{1}{2}(1 + i)z}{-\log(1 + e^i) + \log4 + \frac{1}{2}(1 + i)} \neq \frac{\log(1 + e^z)}{\log2},
\end{equation}
and $||g||^2 = (-\log(1 + e^i) + \log4 + \frac{1}{2}(1 + i))^*(1) = (\frac{1}{2} + \log4 + - \frac{1}{2}\log(2(1 + \cos(1))))^*(1) \approx 1.32$, where $||\cdot||$ denotes the Hilbert space norm.

\end{proof}

It is easy to see that the interpolation algorithm of Theorem \ref{thm: interpolation} results in an interpolant, even in a Krein space. However, by Proposition \ref{prop: kreincounter}, the result is not of minimal norm, as orthogonal projections are not Krein form-minimizing in a Krein space.

Note that while sign changes in a function's power series do not affect the functions in one's induced reproducing kernel space, they do, however, change the bilinear form of one's Krein space, and therefore change interpolation results.

\begin{remark}
It is important to note that Theorem \ref{thm: schwartz} does \textit{not} imply a bijection between differences of kernel functions and RKKS. It does show, however, that once one has a difference of kernel functions, there must exist a RKKS. For our purposes, however, there will be a bijective relationship, specifically because we are dealing with an RKKS analogue of \textit{weighted Hardy spaces}, i.e. RKKS with kernels of the form $K(x, y) = \sum\limits_{j=0}^{\infty}a_j (y^*x)^j$. These spaces consist of elements with formal power series expansions, which allows us to draw the bijection.

Concretely, consider a kernel $K(x, y) = K_1(x, y) - K_2(x, y) = \sum\limits_{j=0}^{\infty}a_j (y^*x)^j - \sum\limits_{k=0}^{\infty}b_k (y^*x)^k$, where $a_j \geq 0, \ b_k \geq 0$ for all $j, k$ and each series is absolutely summable within some radius of convergence. Moreover, suppose $a_j > 0 \implies b_j = 0$ and $b_j > 0 \implies a_j = 0$. Then the RKHS induced by $K_1$ is disjoint from the RKHS induced by $K_2$. This follows immediately from Theorem \ref{thm: includedf}. Indeed, denote $\mathcal{K}_j$ as the RKHS induced by $K_j$, $j=1, 2$. Then, given some function $f \in \mathcal{K}_1$, we can write $f(z) = \sum\limits_{l=0}^{\infty}c_lz^l$. By Theorem \ref{thm: includedf}, there exists some constant $c$ such that $cK_1(x, y) - f(x)\overline{f(y)}$ is a kernel function. $f(x)\overline{f(y)}$ itself has a (SCV) power series expansion, and each of its terms with nonzero coefficients must line up with the terms of the power series of $K_1$. Considering the terms with nonzero coefficients in $K_1$ are disjoint from the terms with nonzero coefficients in $K_2$, it follows that $f \not\in \mathcal{K}_2$. Similarly, if $f \in \mathcal{K}_2$, then $f \not\in \mathcal{K}_1$.
\end{remark}

\subsection{Construction of Associated Hilbert Spaces}
In practice there are some concrete ways of forcing the power series coefficients of a function to be positive. For example, in the softplus case, we have

\begin{equation}
    -\log(1 + e^{iz}) + \log4 + \frac{1}{2}(1 + i)z = \log2 + \frac{1}{2}z + \frac{1}{8}z^2 + \frac{1}{192}z^4 + O(z^6)
\end{equation}

In the hyperbolic tangent case, we have

\begin{equation}
    -i\tanh{iz} = z + \frac{1}{3}z^3 + \frac{2}{15}z^5 + O(z^7) = \tan(z)
\end{equation}

For computational purposes, one may be worried that the matrix to be inverted in Theorem \ref{thm: interpolation} might not be positive definite. The following result shows that, if one's RKHS contains the polynomials, the corresponding kernel function must be strictly positive definite.  

\begin{theorem}[\cite{paulsen2016introduction}]\label{thm: polynomials}
Let $\mathcal{H}$ be a RKHS on $\mathcal{X}$ with kernel function $K$. If $K$ is not strictly positive definite, then $\mathcal{H}$ cannot contain the polynomials.
\end{theorem}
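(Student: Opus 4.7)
The plan is to proceed by contrapositive: supposing that $\mathcal{H}$ contains every polynomial on $\mathcal{X} \subseteq \mathbb{C}^d$, I will show that $K$ must be strictly positive definite. The starting point is the observation recorded in the excerpt just after the proof of the Moore theorem: if $K$ fails to be strictly positive definite, then there exist distinct points $x_1, \ldots, x_N \in \mathcal{X}$ and scalars $\alpha_1, \ldots, \alpha_N$, not all zero, such that $\sum_{i=1}^N \alpha_i k_{x_i} = 0$ as an element of $\mathcal{H}$. Pairing this relation with an arbitrary $f \in \mathcal{H}$ via the reproducing property yields the functional identity $\sum_{i=1}^N \bar{\alpha_i}\, f(x_i) = 0$, valid on all of $\mathcal{H}$.

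The heart of the argument is to exhibit, for each index $j$, a polynomial $p_j$ with $p_j(x_i) = \delta_{ij}$, and then to feed these into the identity so as to force each $\alpha_j$ to vanish. First I would choose a linear functional $\ell: \mathbb{C}^d \to \mathbb{C}$ whose values $\ell(x_1), \ldots, \ell(x_N)$ are pairwise distinct; such an $\ell$ exists generically, because for each pair $(x_i, x_j)$ with $i \neq j$ the functionals vanishing on $x_i - x_j$ form a proper subspace of $(\mathbb{C}^d)^*$, and $(\mathbb{C}^d)^*$ cannot be exhausted by finitely many proper subspaces. Setting $t_i := \ell(x_i)$, I would then invoke classical one-variable Lagrange interpolation to construct $L_j(t) := \prod_{i \neq j} (t - t_i)/(t_j - t_i)$. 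The composition $p_j(x) := L_j(\ell(x))$ is a polynomial on $\mathbb{C}^d$ satisfying $p_j(x_i) = \delta_{ij}$ by construction.

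Feeding $f = p_j$, which lies in $\mathcal{H}$ by hypothesis, into $\sum_i \bar{\alpha_i}\, f(x_i) = 0$ collapses the sum to $\bar{\alpha_j} = 0$. Since this holds for every $j$, the vector $\alpha$ must in fact be zero, contradicting its nontriviality. This contradiction establishes strict positive definiteness of $K$, completing the contrapositive.

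The main obstacle, such as it is, lies in the second step: arranging separation of a finite set of points in several complex variables by polynomials. The device of composing with a generic linear functional reduces this to the familiar univariate Lagrange problem, sidestepping any need for heavier multivariate interpolation machinery.
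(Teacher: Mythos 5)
Your proof is correct and follows the same strategy as the paper's: extract a nontrivial null vector $\alpha$ of the Gram matrix, observe that $\sum_i \alpha_i k_{x_i}=0$ in $\mathcal{H}$ so that $\sum_i \overline{\alpha_i}\,f(x_i)=0$ for every $f\in\mathcal{H}$, and conclude that the polynomials cannot all satisfy this linear constraint. The only difference is that you actually prove the final step, which the paper merely asserts (``there is no $\alpha$ such that, for all polynomials $p$, $\sum_j \alpha_j p(x_j)=0$''): your device of choosing a linear functional $\ell$ separating the $x_i$ (available because $(\mathbb{C}^d)^*$ is not a finite union of proper subspaces) and composing with univariate Lagrange basis polynomials is exactly the right way to discharge that claim in several variables.
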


\begin{proof}
Because $K$ is not positive definite, there exist $\{x_j\} \subseteq \mathcal{X}$ such that $\sum\limits_{j, k=0}^{\infty}\alpha_k \overline{\alpha_j}K(x_j, x_k) = ||\sum\limits_{j=0}^{\infty} \alpha_j k_{x_j}|| = 0$. In particular, this means that, for all $f \in \mathcal{H}$, $\langle f, \sum\limits_{j=0}^{\infty} \alpha_j k_{x_j} \rangle = 0$. On the other hand, there is no $\alpha$ such that, for all polynomials $p$, $\sum\limits_{j=0}^{\infty} \alpha_j p(x_j) = 0$ holds.
\end{proof}

This motivates us, for example, to make sure the power series expansions of our kernel functions have a constant term.




To illustrate how important this constant term is, let us consider a simple example of approximating a polynomial $p(x) = 5x^3 - x^2 + 3x$.

\begin{figure}[H]
\begin{subfigure}{.5\textwidth}
  \centering
  \includegraphics[width=.8\linewidth]{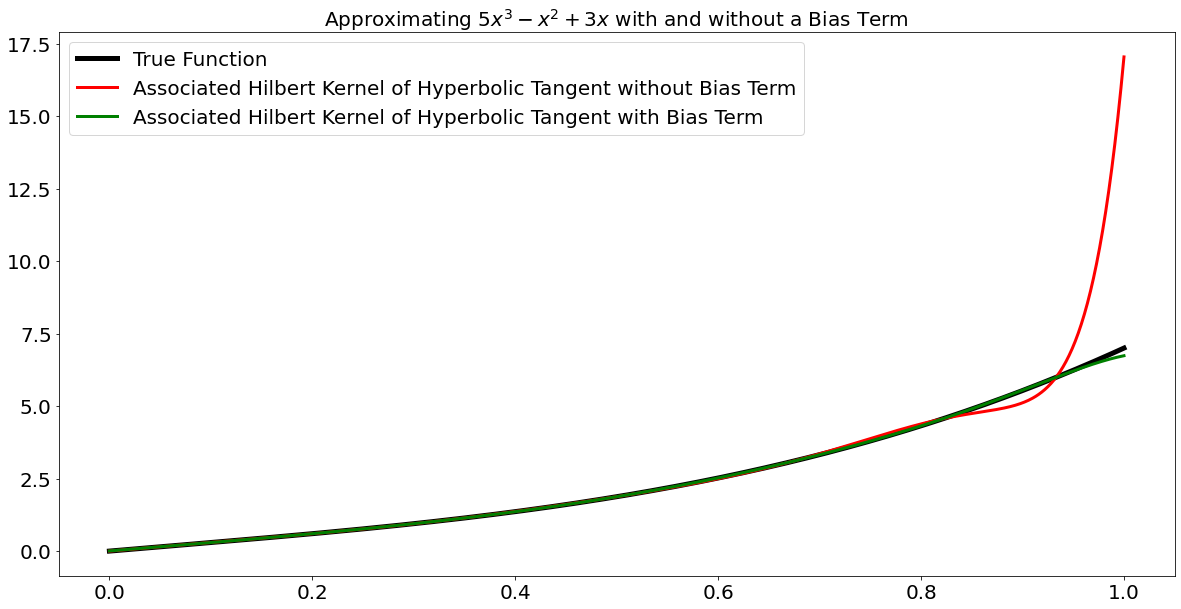}
  \caption{True function and interpolating estimates}
  \label{fig: tanh1}
\end{subfigure}%
\begin{subfigure}{.5\textwidth}
  \centering
  \includegraphics[width=.8\linewidth]{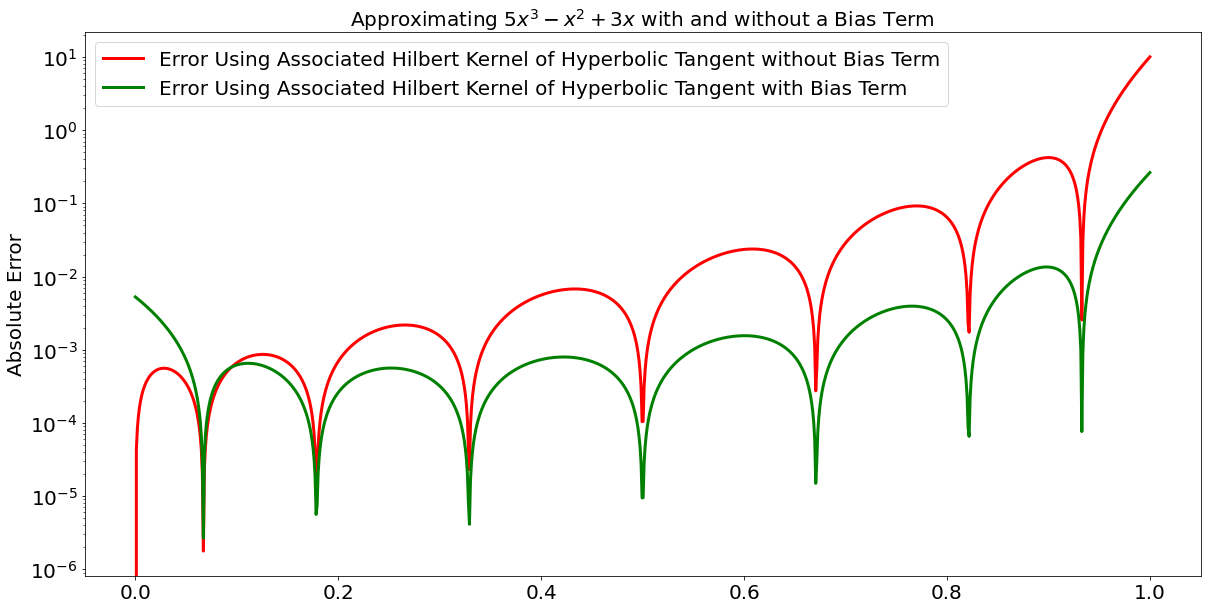}
  \caption{Error}
  \label{fig: tanh2}
\end{subfigure}
\caption{Interpolating a polynomial at $7$ nodes with and without a bias term. Note that the constant term grants the green line the ability to undershoot at the beginning, thereby decreasing the overall uniform error.}
\label{fig:fig}
\end{figure}

Even though the function to be interpolated lies in a subspace which does not contain the constant functions, this does not help the associated Hilbert kernel of hyperbolic tangent to approximate it any better. Adding in the constant term $+1$, however, significantly helps approximation. In fact, looking on the right-hand side of the plot, the red curve's error is more than double the uniform error of the green curve, which shows that merely shifting the red curve down will not allow it to overtake the green curve's uniform accuracy.

\begin{remark}
A power series with complex-valued coefficients is not the kernel for a RKKS. A \enquote{kernel} of this form would apply imaginary magnitudes to some elements in the reproducing kernel space, which perhaps could be an alternative line of research. However, for the work presented here, we are more interested in finding extrema (e.g. minimum norm interpolants), and there is no clear way of interpreting an element of imaginary magnitude, because $\mathbb{C}$ is not an ordered field. 
\end{remark}

\section{Krein Space Interpolation Viewed as Regularization}
Krein spaces which have the same associated Hilbert space are equipped with different geometries and therefore result in different regularizations through interpolation. It should be noted that the use of RKKS in function approximation has been studied; in particular, Oglic and G\"artner studied its use in the context of developing generalization bounds in terms of Rademacher complexity \cite{oglic2018learning}. Our approach and results differ in the sense that we view our optimization problem from the context of an associated Hilbert space. Our regularization stems from here, rather than by imposing constraints and Tikhonov regularization-type terms on an optimization problem.

\subsection{Example}\label{subsec: tantanh}
Consider Figures \ref{fig: rkks_regularize} and \ref{fig: rkks_regularize_error}. Again, we are interpolating a simple polynomial, but rather than working only in the associated Hilbert space of the RKKS induced by $K(x, y) = \tanh(y^*x)$, we use the interpolant given to us by performing RKKS interpolation as well. Note, by Theorem \ref{thm: assochilb} both neural networks will have the same expressivity.

\begin{figure}[H]
\begin{subfigure}{.5\textwidth}
  \centering
  \includegraphics[width=.8\linewidth]{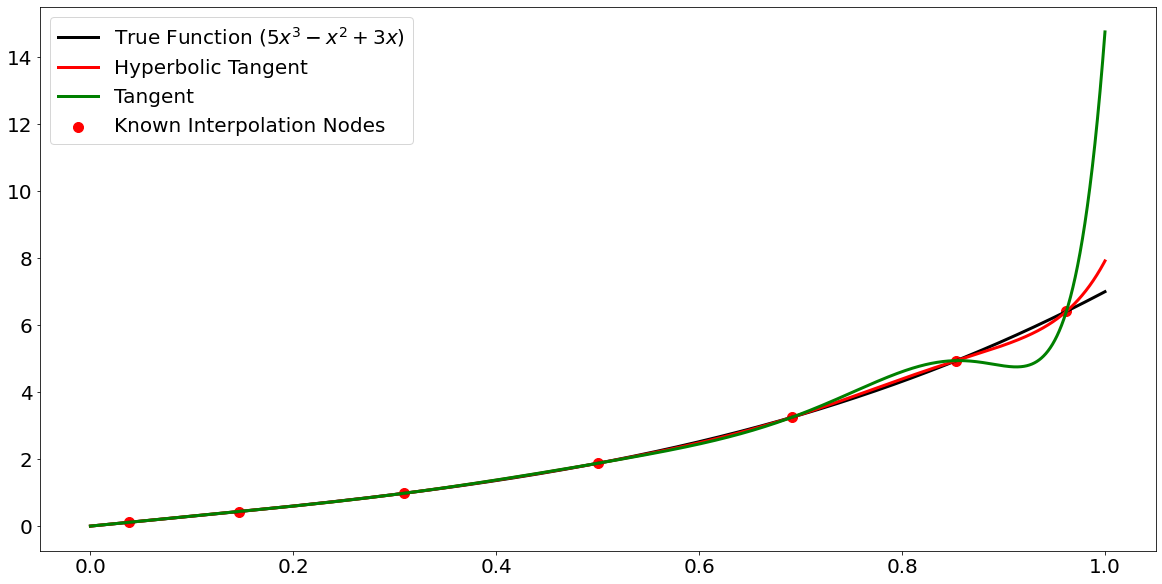}
  \caption{True function and interpolating estimates}
  \label{fig: rkks_regularize}
\end{subfigure}%
\begin{subfigure}{.5\textwidth}
  \centering
  \includegraphics[width=.8\linewidth]{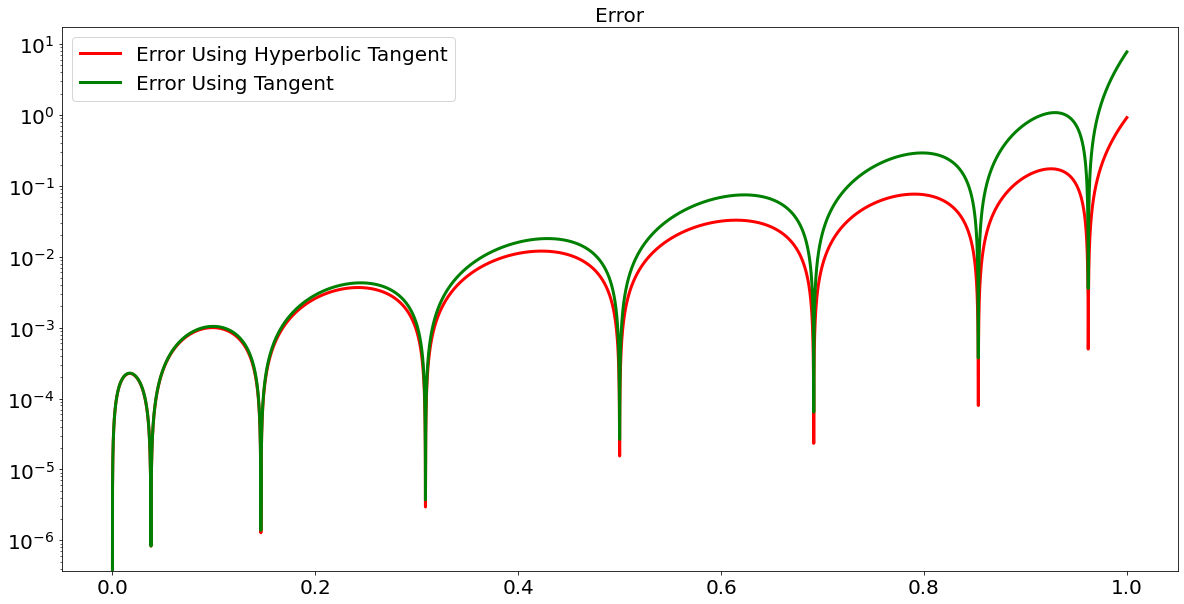}
  \caption{Error}
  \label{fig: rkks_regularize_error}
\end{subfigure}
\caption{Finding an interpolant of minimal norm in the RKHS induced by using $K(x, y) = \tan(y^*x)$ does not yield an approximant with good generalization properties. In this case, using hyperbolic tangent is more sensible.}
\label{fig:fig}
\end{figure}

In this case, using the associated Hilbert space (which again, corresponds to using $K(x, y) = \tan(y^*x)$ as our kernel function) causes thrashing toward the right portion of the graph. On the other hand, the RKKS interpolant, while it is not a minimum norm interpolant, generalizes quite better.

This example illustrates that working in a RKKS may have attractive properties. Below, we will show that these properties correspond to performing regularization against the optimization problem given in an associated Hilbert space.

\subsection{The Geometry of RKKS Interpolation}


The following will present rigorous justification for why the neural network with $\tanh(\cdot)$ activation outperforms the neural network with $\tan(\cdot)$ activation in Subsection \ref{subsec: tantanh}. In particular, we show that RKHS/RKKS interpolation results in the unique realization of a stationarity condition, given by the norm/Krein quadratic form.

We begin with a motivational lemma, which will later be generalized.

\begin{lemma}\label{lemma: rkksmin}
A line $l$ in $\mathbb{R}^2$ given by $y = mx + b$ is always tangent to a unique circle about the origin. In addition, if $m \neq \pm 1$ and $l$ does not pass through the origin, then $l$ is always tangent to a unique hyperbola of either the form $x^2 - y^2 = r^2$ or $y^2 - x^2 = r^2$.
\end{lemma}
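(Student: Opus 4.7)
The plan is to reduce both claims to discriminant computations. A line is tangent to a conic precisely when the substitution of the line equation into the conic equation yields a quadratic (in $x$) with zero discriminant. Parameterize the families in question: circles about the origin are $x^2 + y^2 = r^2$ with $r \geq 0$, and the two hyperbola families are $x^2 - y^2 = r^2$ and $y^2 - x^2 = r^2$ with $r > 0$. For each family I will substitute $y = mx + b$, impose the vanishing-discriminant condition, and solve for $r^2$, then check that the solution is unique and lies in the allowed range.

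For the circle, substituting yields $(1+m^2)x^2 + 2mbx + (b^2 - r^2) = 0$, and the discriminant condition simplifies to $r^2 = b^2/(1+m^2)$. Since $1 + m^2 > 0$, this is always a well-defined nonnegative value, giving exactly one tangent circle (the degenerate radius-zero case corresponding to $b = 0$, when the line passes through the origin). This directly recovers the familiar distance-to-line formula $|b|/\sqrt{1+m^2}$.

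For the hyperbolas, substituting into $x^2 - y^2 = r^2$ gives $(1-m^2)x^2 - 2mbx - (b^2 + r^2) = 0$, and tangency forces $r^2 = b^2/(m^2 - 1)$. Substituting into $y^2 - x^2 = r^2$ gives $(m^2-1)x^2 + 2mbx + (b^2 - r^2) = 0$, and tangency forces $r^2 = b^2/(1 - m^2)$. The hypotheses $m \neq \pm 1$ and $b \neq 0$ ensure the denominators are nonzero and the numerators are positive, so exactly one of the two expressions yields a legitimate positive $r^2$: the first when $|m| > 1$ and the second when $|m| < 1$. In either case $r^2$ is determined uniquely, giving the unique tangent hyperbola in its respective family.

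The main obstacle, such as it is, is the sign bookkeeping in the hyperbola case: one must argue that the dichotomy $|m|<1$ versus $|m|>1$ exactly selects which of the two families provides a tangent, and that no hyperbola in the other family can be tangent (since a negative value of $r^2$ is geometrically meaningless). Everything else is a direct quadratic computation, and the cases $m = \pm 1$ and $b = 0$ are correctly excluded by the hypotheses because they correspond exactly to the degenerate configurations: lines parallel to the asymptotes $y = \pm x$ of the hyperbolas, and lines through the center of symmetry.
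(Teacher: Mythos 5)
Your proof is correct and takes a genuinely different route from the paper's. The paper handles the circle geometrically (the foot of the perpendicular from the origin to $l$ determines the tangent radius) and the hyperbola via implicit differentiation: it sets $\frac{\d y}{\d x} = \frac{x}{y} = m$ at the tangency point, solves $y = m^2 y + b$ to get $y = \frac{b}{1-m^2}$, and substitutes into $y^2 - x^2 = r^2$ to recover $r$, then swaps the roles of $x$ and $y$ when $|m|>1$. You instead substitute $y = mx+b$ into each conic and impose the vanishing-discriminant condition on the resulting quadratic in $x$, obtaining $r^2 = \frac{b^2}{1+m^2}$ for the circle, $r^2 = \frac{b^2}{m^2-1}$ for $x^2 - y^2 = r^2$, and $r^2 = \frac{b^2}{1-m^2}$ for $y^2 - x^2 = r^2$; the sign of $m^2-1$ then decides which of the two hyperbola families produces a legitimate positive $r^2$, with the other automatically excluded. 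Your approach is more uniform (all three conics are treated identically), is purely algebraic and avoids calculus, and handles uniqueness cleanly by showing $r^2$ is forced; the paper's version makes the geometric content more visible (the tangent point lies where the curve's slope equals $m$). You also explicitly track the degenerate $r=0$ circle when $b=0$ and explain why the excluded cases $m = \pm 1$ and $b=0$ correspond to asymptote-parallel lines and lines through the center, which the paper's proof leaves implicit.
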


\begin{proof}
The first claim is classical and immediately follows from the fact that the (unique) perpendicular drawn from the origin onto $l$ is orthogonal to a circle about the origin with radius equal to the perpendicular.

To prove the second claim, note that

\begin{equation}
    y^2 - x^2 = r^2 \implies \frac{\d y}{\d x}y - x = 0 \implies \frac{\d y}{\d x} = \frac{x}{y},
\end{equation}
so we get

\begin{equation}
    y = m^2 y + b \implies y = \frac{b}{1 - m^2}.
\end{equation}
Plugging into the equation $y^2 - x^2 = r^2$, we obtain $r = \frac{b}{+\sqrt{1 - m^2}}$. Note that this only works if $|m| < 1$. If $|m| > 1$, then we use the above argument, fitting a hyperbola of the form $x^2 - y^2 = r^2$ to obtain $r = \frac{b}{+\sqrt{m^2 - 1}}$. These results are unique.
\end{proof}

See Figure \ref{fig: rkks_min}. Lemma \ref{lemma: rkksmin} and Figure \ref{fig: rkks_min} provide a visual interpretation for Theorem \ref{thm: rkksmin} below.

\begin{theorem}\label{thm: rkksmin}
Given a RKKS $\mathcal{K}$ with nonsingular kernel $K = K_+ - K_-$, and interpolation data $\mathcal{A} = \{(x_1, \lambda_1), (x_2, \lambda_2), \dots, (x_N, \lambda_N)\}$, let $\mathcal{F}$ denote the affine space of functions in $\mathcal{K}$ which interpolate $\mathcal{A}$. If $\mathcal{F}$ does not contain the zero function, and $K_+ \neq c \pm K_-$ for any $c \in \mathbb{R}$, then it is orthogonal in the Krein sense to the space $\mathcal{H}_{\mathcal{A}} := \text{span}\{k_{x_j} \ | \ j = 1, 2, \dots, N\}$. Moreover, there is a unique function $h \in \mathcal{H}_{\mathcal{A}} \cap \mathcal{F}$.
\end{theorem}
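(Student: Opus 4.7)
The plan is to follow the geometric picture suggested by Lemma \ref{lemma: rkksmin}: the affine space $\mathcal{F}$ plays the role of the line $l$, the subspace $\mathcal{H}_{\mathcal{A}}$ plays the role of the radial direction from the origin, and the Krein quadratic form $[\cdot,\cdot]$ generalizes the Euclidean/hyperbolic quadratic form whose level sets the line is tangent to. Accordingly, I would split the argument into two pieces: (i) show the tangent (direction) space of $\mathcal{F}$ is Krein-orthogonal to $\mathcal{H}_{\mathcal{A}}$, and (ii) show that the affine space $\mathcal{F}$ meets $\mathcal{H}_{\mathcal{A}}$ in a unique point $h$, which is the analogue of the unique tangency point.

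For (i), let $\mathcal{F}_0 := \{f \in \mathcal{K} : f(x_j) = 0, \, j=1,\ldots,N\}$ denote the direction subspace, so that $\mathcal{F}$ is an affine translate of $\mathcal{F}_0$. For any $f \in \mathcal{F}_0$, the reproducing property gives $[f, k_{x_j}] = f(x_j) = 0$ for each $j$, and sesquilinearity extends this to $[f, g] = 0$ for every $g \in \mathcal{H}_{\mathcal{A}}$. This is exactly Krein-orthogonality of $\mathcal{F}$ (read: its tangent directions) to $\mathcal{H}_{\mathcal{A}}$. For (ii), any candidate $h \in \mathcal{H}_{\mathcal{A}}$ has the form $h = \sum_{j=1}^N \alpha_j k_{x_j}$, and the interpolation conditions $h(x_i) = \lambda_i$ collapse to the linear system $\mathcal{P}\boldsymbol{\alpha} = \boldsymbol{\lambda}$ with $\mathcal{P}_{ij} = K(x_i, x_j)$. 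Nonsingularity of $K$ (by hypothesis) makes $\mathcal{P}$ invertible, so $\boldsymbol{\alpha} = \mathcal{P}^{-1}\boldsymbol{\lambda}$ is unique, producing the unique $h \in \mathcal{H}_{\mathcal{A}} \cap \mathcal{F}$ just as in Theorem \ref{thm: interpolation}.

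The two remaining hypotheses correspond precisely to the conditions $b \neq 0$ and $|m| \neq 1$ from Lemma \ref{lemma: rkksmin}: $0 \notin \mathcal{F}$ forces $h \neq 0$, so the tangency picture is nontrivial (a line through the origin is tangent to no proper circle or hyperbola), while $K_+ \neq c \pm K_-$ rules out the degenerate Krein structures in which the level sets of $[\cdot,\cdot]$ would be parallel hyperplanes aligned with $\mathcal{F}_0$, a situation in which tangency could no longer single out a unique level set. The main obstacle I expect is giving precise meaning to the phrase \emph{``$\mathcal{F}$ is orthogonal in the Krein sense to $\mathcal{H}_{\mathcal{A}}$''}, since Krein-orthogonality does not imply extremality of $[\cdot,\cdot]$, so one cannot transplant the RKHS projection-theoretic intuition wholesale. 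The correct reading, which I would make explicit, is that $\mathcal{F}$ meets the Krein level set $\{f : [f,f] = [h,h]\}$ tangentially at $h$; this follows because the Krein-gradient of $[\cdot,\cdot]$ at $h$ is (proportional to) $h$ itself, $h \in \mathcal{H}_{\mathcal{A}}$, and the tangent space of $\mathcal{F}$ is $\mathcal{F}_0 \perp \mathcal{H}_{\mathcal{A}}$ by part (i).
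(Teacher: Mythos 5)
Your proof matches the paper's argument step for step: part~(i), Krein-orthogonality of the direction subspace $\mathcal{F}_0$ to $\mathcal{H}_{\mathcal{A}}$ via the reproducing property and sesquilinearity, and part~(ii), uniqueness of $h \in \mathcal{H}_{\mathcal{A}} \cap \mathcal{F}$ from invertibility of the Gram matrix $\mathcal{P}$, are exactly the two steps the paper takes. Your closing remarks on how the hypotheses $0 \notin \mathcal{F}$ and $K_+ \neq c \pm K_-$ mirror the $b \neq 0$, $|m| \neq 1$ conditions of Lemma~\ref{lemma: rkksmin} are a fair gloss on the tangency picture, though, like the paper's own proof, your core argument never actually invokes them.
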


\begin{proof}
If $f, g \in \mathcal{F}$, then $(f - g) \in \mathcal{F}^{\perp}$, because $(f - g)$ must interpolate zero at each point $x_j$. Also note that $[(f - g), k_{x_j}] = (f - g)(x_j) = 0$ for all $j = 1, 2, \dots, N$, so $(f - g) \in \mathcal{H}_{\mathcal{A}}^{\perp}$. Let $P$ denote the (nonsingular) matrix whose $i,j$ element is defined by $K(x_i, x_j)$, and consider two functions $h_1, h_2 \in \mathcal{H}_{\mathcal{A}} \cap \mathcal{F}$. In particular, let $h_1(z) = \sum\limits_{j=1}^{N}\alpha_j k_{x_j}(z)$ and $h_2(z) = \sum\limits_{j=1}^{N}\beta_j k_{x_j}(z)$, and denote $\boldsymbol{\alpha} = (\alpha_1, \alpha_2, \dots, \alpha_N)^T$, $\boldsymbol{\beta} = (\beta_1, \beta_2, \dots, \beta_N)^T$. Then, $(\boldsymbol{\alpha} - \boldsymbol{\beta})$ is in the kernel of $P$, and so $\boldsymbol{\alpha} = \boldsymbol{\beta}$ and $h_1 = h_2$. Therefore, there is a unique $h \in \mathcal{H}_{\mathcal{A}} \cap \mathcal{F}$, and because $\mathcal{H}_{\mathcal{A}}$ and $\mathcal{F}$ share the same orthogonal complement, they must be tangent at $h$.
\end{proof}

\begin{figure}[H]
    \centering
    \includegraphics[scale=.3]{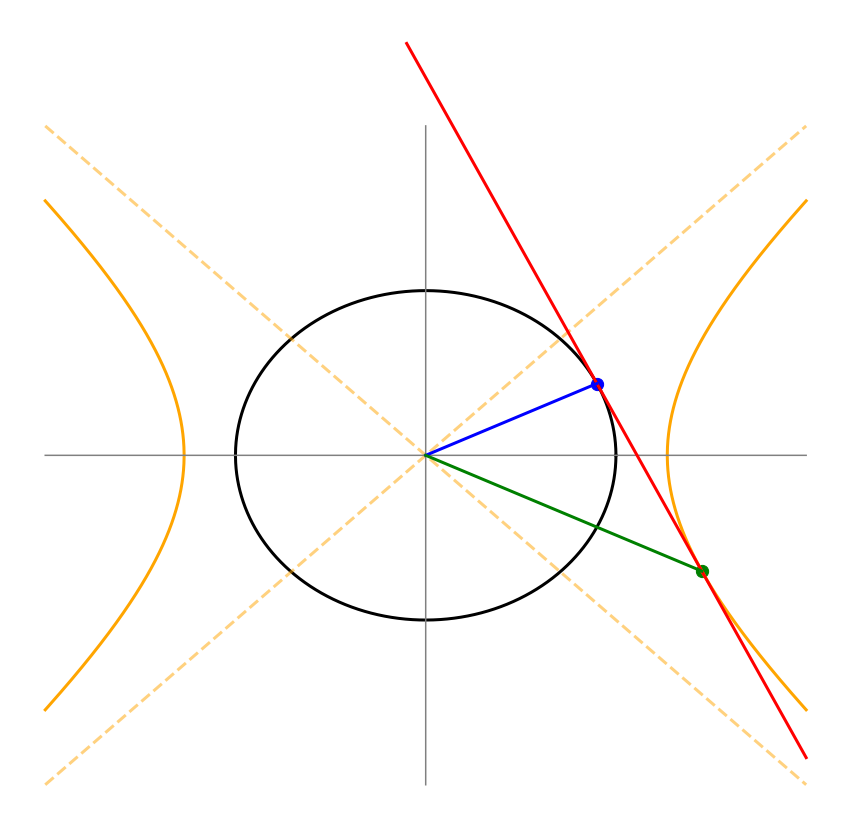}
    \caption{A depiction of the stationarity conditions of RKHS interpolation and RKKS interpolation. Using the terminology in Theorem \ref{thm: rkksmin}, the red line corresponds to $\mathcal{F}$, the green line corresponds to $\mathcal{H}_{\mathcal{A}}$, and the blue line corresponds to the span of associated Hilbert space kernels at points $\{x_j\}_{j=1}^{N}$. Here the level set corresponding to the stationarity condition given by interpolation in $\mathcal{H}_{\mathcal{A}}$ is depicted as a orange hyperbola; the level set corresponding to the stationarity condition given by interpolation in the associated Hilbert space is depicted as a black circle. Note the uniqueness of the stationarity conditions in this picture. This uniqueness holds across all interpolation problems which satisfy the conditions given in Theorem \ref{thm: rkksmin}.}
    \label{fig: rkks_min}
\end{figure}

\begin{remark}\label{remark: rkksreg}
Figure \ref{fig: rkks_min} makes it clear that performing interpolation in a RKKS with kernel $K = K_+ - K_-$ results in the unique interpolating function which is either minimal with respect to the norm induced by $K_+$ or maximal with respect to the Krein quadratic form induced by $-K_-$. This extremal property provides a method of regularizing the optimization problem outlined in Theorem \ref{thm: interpolation} (using kernel $\overset{\mathcal{H}}{K} = K_+ + K_-$).
\end{remark}

Remark \ref{remark: rkksreg}, together with Figure \ref{fig: rkks_min} and Theorem \ref{thm: rkksmin} are to what we refer when we relate RKKS interpolation and regularization.

\section{The Theory of Contractive Containment and Filling up the Gaps in a RKKS}

We now showcase the power of Theorem \ref{thm: assochilb} by showing that activation functions such as hyperbolic tangent and softplus generate function spaces which are strict subsets of the space induced by the $\omega-$kernel. 

\begin{theorem}[Aronszajn]\label{thm: aronszajn}
Let $\mathcal{H}_1, \mathcal{H}_2$ be RKHSs on $\mathcal{X}$ with respective reproducing kernels and norms $K_1, K_2$ and $||\cdot||_1, ||\cdot||_2$. If $K = K_1 + K_2$ and $\mathcal{H}$ is the induced RKHS with norm $||\cdot||$, then

\begin{equation}
    \mathcal{H} = \{f_1 + f_2 \ | \ f_1 \in \mathcal{H}_1, \ f_2 \in \mathcal{H}_2\}.
\end{equation}
Moreover, for $f \in \mathcal{H}$, 

\begin{equation}
    ||f||^2 = \min\{||f_1||_1^2 + ||f_2||_2^2 \ | \ f = f_1 + f_2, f_1 \in \mathcal{H}_1, \ f_2 \in \mathcal{H}_2\}
\end{equation}

\end{theorem}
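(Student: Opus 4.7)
The plan is to realize $\mathcal{H}$ as a quotient of the external orthogonal direct sum $\mathcal{H}_1\oplus\mathcal{H}_2$ by the kernel of the summation map, and then verify directly that the resulting space of functions, equipped with the quotient norm, has $K=K_1+K_2$ as its reproducing kernel. Uniqueness of the RKHS associated with a given kernel (E.~H.~Moore's theorem) then forces this quotient space to coincide with $\mathcal{H}$.

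First I would form the Hilbert space $\mathcal{D}:=\mathcal{H}_1\oplus\mathcal{H}_2$ with inner product $\langle(f_1,f_2),(g_1,g_2)\rangle_{\mathcal{D}}=\langle f_1,g_1\rangle_1+\langle f_2,g_2\rangle_2$, and define the linear map $T:\mathcal{D}\to\mathcal{F}(\mathcal{X},\mathbb{C})$ by $T(f_1,f_2)=f_1+f_2$. Its image is exactly the candidate set $\mathcal{M}=\{f_1+f_2:f_i\in\mathcal{H}_i\}$. The kernel is $\mathcal{N}=\{(h,-h):h\in\mathcal{H}_1\cap\mathcal{H}_2\}$, which I would show is closed in $\mathcal{D}$: if $(h_n,-h_n)\to(f,g)$ in $\mathcal{D}$, then by boundedness of evaluation in each $\mathcal{H}_i$ we get pointwise convergence $h_n\to f$ and $-h_n\to g$, hence $g=-f$ and $f\in\mathcal{H}_1\cap\mathcal{H}_2$. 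Consequently $T$ restricts to a bijection from $\mathcal{N}^\perp$ onto $\mathcal{M}$, and transferring the norm yields a Hilbert space structure on $\mathcal{M}$ with $\|f\|^2=\|(f_1,f_2)_{\mathcal{N}^\perp}\|_{\mathcal{D}}^2$, where $(f_1,f_2)_{\mathcal{N}^\perp}$ denotes the projection onto $\mathcal{N}^\perp$ of any preimage. Because $\mathcal{N}$ is closed, this projection exists and provides the minimizing decomposition, so the infimum in the statement is attained and equals the quotient norm.

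Next I would verify the reproducing property. Fix $y\in\mathcal{X}$ and let $k^{(i)}_y=K_i(\cdot,y)\in\mathcal{H}_i$. Write $(k^{(1)}_y,k^{(2)}_y)=(u_1,u_2)+(v,-v)$ with $(u_1,u_2)\in\mathcal{N}^\perp$ and $(v,-v)\in\mathcal{N}$. Then
\begin{equation}
T(u_1,u_2)=u_1+u_2=k^{(1)}_y+k^{(2)}_y=K(\cdot,y),
\end{equation}
so $K(\cdot,y)\in\mathcal{M}$ and its norm-minimizing representative in $\mathcal{N}^\perp$ is $(u_1,u_2)$. For arbitrary $f=f_1+f_2\in\mathcal{M}$ with $(f_1,f_2)\in\mathcal{N}^\perp$,
\begin{equation}
\langle f,K(\cdot,y)\rangle_{\mathcal{M}}=\langle(f_1,f_2),(u_1,u_2)\rangle_{\mathcal{D}}=\langle(f_1,f_2),(k^{(1)}_y,k^{(2)}_y)\rangle_{\mathcal{D}}=f_1(y)+f_2(y)=f(y),
\end{equation}
using that $(f_1,f_2)\perp\mathcal{N}$ kills the $(v,-v)$ contribution and the reproducing property of each $\mathcal{H}_i$. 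This shows $K$ is the reproducing kernel of $\mathcal{M}$; by uniqueness of the RKHS associated with a kernel, $\mathcal{M}=\mathcal{H}$ with matching norms, which is both claims.

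The main obstacle is the middle step: the minimizing decomposition and the corresponding formula for $\|f\|$ only work because $\mathcal{N}$ is closed in $\mathcal{D}$, and this in turn rests on the fact that norm convergence in each $\mathcal{H}_i$ implies pointwise convergence. Once that closedness is in hand, the reproducing kernel computation is essentially a one-line orthogonality argument, and Moore's uniqueness theorem finishes the identification.
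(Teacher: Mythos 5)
Your proof is correct and is exactly the classical Aronszajn direct-sum/quotient argument: form the external sum $\mathcal{H}_1\oplus\mathcal{H}_2$, factor out the closed subspace $\{(h,-h):h\in\mathcal{H}_1\cap\mathcal{H}_2\}$ (closed because norm convergence in an RKHS implies pointwise convergence), transfer the norm, and verify the reproducing property via orthogonality, then invoke uniqueness of the RKHS for a given kernel. The paper itself does not write out a proof but defers to \cite{paulsen2016introduction}, and the proof given there is this same construction, so your proposal matches the reference's approach.
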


\begin{proof}
See \cite{paulsen2016introduction}.
\end{proof}

\begin{defn}
Given two Hilbert spaces $\mathcal{H}_1, \mathcal{H}_2$ with norms $||\cdot||_1, ||\cdot||_2$, respectively, $\mathcal{H}_1$ is \textbf{contractively contained} in $\mathcal{H}_2$ if 

\begin{enumerate}
    \item $\mathcal{H}_1$ is a subspace of $\mathcal{H}_2$
    \item For every $f \in \mathcal{H}_1$, $||f||_2 \leq ||f||_1$
\end{enumerate}
\end{defn}

\begin{theorem}
Let $\mathcal{H}_1, \mathcal{H}_2$ be RKHSs over $\mathcal{X}$ with reproducing kernels $K_1, K_2$, respectively. Then $\mathcal{H}_1$ is contractively contained in $\mathcal{H}_2$ if and only if $K_2 - K_1$ is a kernel function.
\end{theorem}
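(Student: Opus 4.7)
The plan is to prove this biconditional using Aronszajn's sum theorem (Theorem \ref{thm: aronszajn}) for one direction, and the adjoint of the inclusion map together with the reproducing property for the other. Denote throughout by $k^{(1)}_y$ and $k^{(2)}_y$ the reproducing kernels of $\mathcal{H}_1$ and $\mathcal{H}_2$ at the point $y$.

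For the direction ``$K_2 - K_1$ is a kernel function $\Rightarrow$ contractive containment'': assume $K_3 := K_2 - K_1$ is a kernel function. By Moore's theorem there is an RKHS $\mathcal{H}_3$ on $\mathcal{X}$ with reproducing kernel $K_3$, and since $K_2 = K_1 + K_3$, Aronszajn's theorem (Theorem \ref{thm: aronszajn}) gives $\mathcal{H}_2 = \{f_1 + f_3 : f_1 \in \mathcal{H}_1,\, f_3 \in \mathcal{H}_3\}$, so in particular $\mathcal{H}_1 \subseteq \mathcal{H}_2$. Applying the Aronszajn norm formula to an arbitrary $f \in \mathcal{H}_1$ with the trivial decomposition $f = f + 0$ yields $\|f\|_2^2 \leq \|f\|_1^2 + \|0\|_3^2 = \|f\|_1^2$, which is the desired contractive estimate.

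For the direction ``contractive containment $\Rightarrow$ $K_2 - K_1$ is a kernel function'': the contractive containment means the inclusion $\iota: \mathcal{H}_1 \hookrightarrow \mathcal{H}_2$ is a bounded operator with $\|\iota\| \leq 1$, so its Hilbert-space adjoint $\iota^*: \mathcal{H}_2 \to \mathcal{H}_1$ also satisfies $\|\iota^*\| \leq 1$. The key identification is that $\iota^*(k^{(2)}_y) = k^{(1)}_y$ for every $y \in \mathcal{X}$; indeed, for any $f \in \mathcal{H}_1$,
\begin{equation}
\langle f, \iota^* k^{(2)}_y \rangle_1 = \langle \iota f, k^{(2)}_y \rangle_2 = f(y) = \langle f, k^{(1)}_y \rangle_1,
\end{equation}
so the two vectors agree by the Riesz representation theorem. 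Now fix arbitrary $x_1, \dots, x_n \in \mathcal{X}$ and scalars $\alpha_1, \dots, \alpha_n \in \mathbb{C}$. Combining the reproducing property with the contractivity of $\iota^*$ gives
\begin{equation}
\sum_{i,j} \overline{\alpha_i} \alpha_j K_1(x_i, x_j) = \Bigl\|\sum_j \alpha_j k^{(1)}_{x_j}\Bigr\|_1^2 = \Bigl\|\iota^*\Bigl(\sum_j \alpha_j k^{(2)}_{x_j}\Bigr)\Bigr\|_1^2 \leq \Bigl\|\sum_j \alpha_j k^{(2)}_{x_j}\Bigr\|_2^2 = \sum_{i,j} \overline{\alpha_i} \alpha_j K_2(x_i, x_j),
\end{equation}
so the matrix with entries $K_2(x_i, x_j) - K_1(x_i, x_j)$ is nonnegative semidefinite, which is precisely the assertion that $K_2 - K_1$ is a kernel function.

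The only genuinely delicate step is the adjoint identification $\iota^*(k^{(2)}_y) = k^{(1)}_y$; everything else is a direct application of either Aronszajn's theorem or the definition of a kernel function combined with the norm bound $\|\iota^*\| \leq 1$. One small subtlety worth flagging is that contractive containment does not assert that $\mathcal{H}_1$ is a closed subspace of $\mathcal{H}_2$, only that $\iota$ is bounded with norm at most one, and this is all that is needed for the adjoint argument to go through.
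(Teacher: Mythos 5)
Your proof is correct, and since the paper simply cites \cite{paulsen2016introduction} for this theorem rather than giving an inline argument, there is no in-paper proof to compare against directly. Both directions are handled cleanly: the Moore--Aronszajn route for the sufficiency of $K_2 - K_1 \succeq 0$ is exactly the standard one, and the adjoint-of-inclusion argument for necessity (with the identification $\iota^* k^{(2)}_y = k^{(1)}_y$) is the classical technique one finds in Paulsen. The only step that deserves the flag you already gave it is the adjoint identification, and your verification via the reproducing property and uniqueness in the Riesz representation is airtight. One very minor stylistic point: in the forward direction you could observe that the inclusion $\mathcal{H}_1 \subseteq \mathcal{H}_2$ as sets already follows from taking $f_3 = 0$ in the Aronszajn decomposition, which you do, so that part is complete; the contractivity then follows from the minimum-norm formula as you wrote. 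No gaps.
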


\begin{proof}
See \cite{paulsen2016introduction}.
\end{proof}

In our case, the important point above is that $\mathcal{H}_1$ is a subspace of $\mathcal{H}_2$.

\begin{theorem}
Let $K(x, y) = \sum\limits_{j=0}^{\infty}a_j(y^*x)^j$ be a Krein kernel with $|a_k| < 1$ for all $k$ and at least one $a_k = 0$. The RKKS induced by $K$ is a proper subspace of the RKHS induced by the $\omega-$kernel.
\end{theorem}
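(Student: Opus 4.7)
The plan is to reduce the statement to a question about reproducing kernel Hilbert spaces via Theorem \ref{thm: assochilb}, and then combine contractive containment with a degree-of-homogeneity argument to extract proper containment.

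\textbf{Reduction.} By Theorem \ref{thm: assochilb}, the RKKS induced by $K$ and the RKHS induced by the associated Hilbert kernel $\overset{\mathcal{H}}{K}(x,y) = \sum_{j=0}^\infty |a_j|(y^*x)^j$ contain exactly the same functions. Hence it suffices to show that $\mathcal{H}(\overset{\mathcal{H}}{K})$ is a proper subset of the RKHS $\mathcal{H}_\omega$ induced by the $\omega$-kernel $K_\omega$.

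\textbf{Containment.} I would compute the difference
\[
K_\omega(x,y) - \overset{\mathcal{H}}{K}(x,y) = \sum_{j=0}^\infty \bigl(1 - |a_j|\bigr)(y^*x)^j,
\]
observe that each coefficient $1 - |a_j|$ is strictly positive (since $|a_j| < 1$), note that each $(y^*x)^j$ is a kernel (a Schur product of rank-one kernels, as inside the proof of Theorem \ref{thm: schurkern}), and verify convergence on $\mathbb{B}^n \times \mathbb{B}^n$ by term-by-term domination by the absolutely convergent $\omega$-kernel series. Theorem \ref{thm: sumkern} then gives that the difference is itself a kernel function, and the contractive containment theorem yields $\mathcal{H}(\overset{\mathcal{H}}{K}) \subseteq \mathcal{H}_\omega$ as a set of functions.

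\textbf{Properness.} Fix an index $k$ with $a_k = 0$ and consider the monomial $f(x) = x_1^k$. The rank-one kernel $x_1^k \overline{y_1}^k$ appears as one summand in the monomial expansion of $(y^*x)^k$, so $K_\omega(x,y) - x_1^k \overline{y_1}^k$ is still a sum of kernels, and the contractive containment theorem gives $f \in \mathcal{H}_\omega$. On the other hand, since the degree-$k$ term has been erased from $\overset{\mathcal{H}}{K}$, I would write $\overset{\mathcal{H}}{K} = \sum_{j : a_j \neq 0} |a_j|(y^*x)^j$ and apply Aronszajn's sum theorem \ref{thm: aronszajn} (in a countable version, justified because the summand RKHSs are spanned by monomials of distinct total degree and hence pairwise intersect trivially) to realize $\mathcal{H}(\overset{\mathcal{H}}{K})$ as a direct sum of spaces of homogeneous polynomials of degrees $j \neq k$. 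Every element of $\mathcal{H}(\overset{\mathcal{H}}{K})$ therefore has vanishing degree-$k$ homogeneous component, while $x_1^k$ does not, exhibiting the required element of $\mathcal{H}_\omega \setminus \mathcal{H}(\overset{\mathcal{H}}{K})$.

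\textbf{Main obstacle.} The most delicate point is the countable Aronszajn decomposition used to establish properness: Theorem \ref{thm: aronszajn} is stated for two summands, so I would need to verify that the closure of the algebraic direct sum of the component RKHSs (under the natural infimum norm) really equals $\mathcal{H}(\overset{\mathcal{H}}{K})$ and that this closure preserves the degree-wise grading. An alternative route that avoids Aronszajn altogether is to invoke Theorem \ref{thm: includedf} directly and show that $c^2 \overset{\mathcal{H}}{K}(x,y) - x_1^k \overline{y_1}^k$ fails positive semidefiniteness on test points such as $x_m = r \zeta_m e_1$ with the $\zeta_m$ distinct unimodular scalars and $r > 0$ small, isolating the obstruction to the missing degree-$k$ homogeneous block.
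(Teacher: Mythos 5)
Your reduction via Theorem \ref{thm: assochilb} and your containment step match the paper's strategy, though you cut out the paper's intermediate kernel $\kappa = \overset{\mathcal{H}}{K} + (y^*x)^l$ and instead verify $\mathcal{H}(\overset{\mathcal{H}}{K}) \subseteq \mathcal{H}_\omega$ and $x_1^k \in \mathcal{H}_\omega$ directly. Your choice of witness $f(x)=x_1^k$ is actually a sharpening of the paper's $f(x)=x^l$, which is imprecise when $x$ is a vector; you make explicit the fact the paper tacitly uses, namely that the multinomial expansion of $(y^*x)^k$ contains $x_1^k\overline{y_1^k}$ as a rank-one summand with unit coefficient. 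Where you genuinely diverge is the non-membership argument: your primary route invokes a countable Aronszajn decomposition of $\mathcal{H}(\overset{\mathcal{H}}{K})$ by total homogeneity degree, whereas the paper applies Theorem \ref{thm: includedf} directly, observing that $c^2\overset{\mathcal{H}}{K}(x,y) - f(x)\overline{f(y)}$ carries the uncompensated negative term $-(y^*x)^l$ (equivalently $-x_1^k\overline{y_1^k}$) and hence cannot be a kernel function for any $c$. The "alternative route" you sketch in your last paragraph is precisely the paper's route. Your Aronszajn-by-degree grading is a legitimate approach --- in a weighted Hardy space the homogeneous components of distinct total degree are mutually orthogonal, so the closure does respect the grading --- but filling in the countable version of Theorem \ref{thm: aronszajn} is exactly the extra work you flag, and the Theorem \ref{thm: includedf} argument avoids the infinitary sum entirely, reducing the obstruction to the elementary fact that a power series kernel cannot carry a strictly negative coefficient on any degree. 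You correctly identified the delicate point; the paper sidesteps it in exactly the way your fallback suggests.
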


\begin{proof}
Taking the associated Hilbert kernel of $K$, we can use RKHS machinery to determine the expressivity given by functions in the RKKS induced by $K$. Let

\begin{equation}
    \overset{\mathcal{H}}{K}(x, y) = \sum\limits_{n=0}^{\infty}|a_j|(y^*x)^j
\end{equation}
Assume without loss of generality that $a_l = 0$. Now, consider the kernel function

\begin{equation}
    \kappa(x, y) = \overset{\mathcal{H}}{K}(x, y) + (y^*x)^l
\end{equation}

Let $\mathcal{H}(\overset{\mathcal{H}}{K})$ denote the RKHS induced by $\overset{\mathcal{H}}{K}$, and let $\mathcal{H}(\kappa)$ denote the RKHS induced by $\kappa$. $\kappa$ is the sum of $\overset{\mathcal{H}}{K}(x, y)$ and another kernel function, and so every function in $\mathcal{H}(\overset{\mathcal{H}}{K})$ is also in $\mathcal{H}(\kappa)$ by Aronszajn's Theorem. Moreover, if $f \in \mathcal{H}(\overset{\mathcal{H}}{K})$, then again by Aronszajn's Theorem, we have $||f||_{\mathcal{H}(\kappa)} \leq ||f||_{\mathcal{H}(\overset{\mathcal{H}}{K})}$. So $\mathcal{H}(\overset{\mathcal{H}}{K})$ is contractively contained in $\mathcal{H}(\kappa)$. 

On the other hand, by Theorem \ref{thm: includedf}, we know that $f$ is in an arbitrary RKHS $\mathcal{H}$ with kernel $K$ if and only if there exists a constant $c$ such that

\begin{equation}
    c^2K(x, y) - f(x)\overline{f(y)}
\end{equation}
is a kernel function. So, we know that $f(x) = x^l$ is in the RKHS $\mathcal{H}(\kappa)$, because

\begin{equation}
    \kappa(x, y) - f(x)\overline{f(y)} = \sum\limits_{n=0}^{\infty}|a_j|(y^*x)^j
\end{equation}
has all nonnegative coefficients, and so is a kernel function. However, 

\begin{equation}
    c^2\overset{\mathcal{H}}{K}(x, y) - f(x)\overline{f(y)} = c^2|a_0| + c^2|a_1| (y^*x) + \dots + c^{l - 1}|a_{l - 1}| (y^*x)^{l - 1} + c^2|a_{l + 1}| (y^*x)^{l + 1} + \dots - (y^*x)^l
\end{equation}
is \textit{not} a kernel function for any $c$, and so $f$ is not in $\mathcal{H}(\overset{\mathcal{H}}{K})$.

Finally, $\mathcal{H}(\kappa)$ is contractively contained in the RKHS induced by the $\omega-$kernel, because

\begin{equation}
    \omega(x, y) - \kappa(x, y) = (1 - |a_0|) + (1 - |a_1|)y^*x + (1 - |a_2|)(y^*x)^2 + (1 - |a_3|)(y^*x)^3 + ...,
\end{equation}
which has nonnegative coefficients, and is therefore a kernel function. 
\end{proof}

\begin{corollary}\label{cor: inferior}
The RKKS induced by $K(x, y) = \tanh(y^*x)$ and by $K(x, y) = \log(1 + e^{y^*x})$ are proper subspaces of the RKHS induced by the $\omega-$kernel.
\end{corollary}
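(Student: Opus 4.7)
The plan is to apply the preceding theorem directly, once to each activation. For $f(z) = \tanh z$ and $f(z) = \log(1+e^z)$, what I must verify is that the Maclaurin coefficients $\{a_k\}$ satisfy (i) $|a_k| \leq 1$ for every $k$ and (ii) $a_k = 0$ for at least one $k$. A preliminary observation handles a small mismatch in the hypotheses: although the theorem is stated with strict inequality $|a_k| < 1$, its proof only uses $1 - |a_j| \geq 0$ at every index (to ensure $\omega - \kappa$ has non-negative coefficients in its expansion in powers of $y^{*}x$), so the weak inequality $|a_k| \leq 1$ already suffices. This observation is what salvages the boundary case that appears below.

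For $\tanh$, condition (ii) is automatic: $\tanh$ is odd, so every even-indexed coefficient vanishes, supplying infinitely many candidate slots $l$ for the theorem's construction. For (i), I would invoke the Euler identity $|B_{2n}| = \frac{2(2n)!}{(2\pi)^{2n}}\zeta(2n)$ already used earlier in the chapter to rewrite
\begin{equation*}
|a_{2n-1}| \;=\; \frac{2(2^{2n}-1)}{\pi^{2n}}\,\zeta(2n),
\end{equation*}
and then bound this uniformly in $n$ using the monotone estimate $\zeta(2n) \leq \zeta(2) = \pi^2/6$ and the rapid geometric decay of $(4/\pi^{2})^{n}$. The case $n=1$ gives exactly $|a_1| = 1$, while for $n \geq 2$ the bound becomes $\tfrac{\pi^2}{3}(4/\pi^{2})^n < 1$, so (i) holds with equality only at the index $k=1$.

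For $\log(1+e^z)$, I would transfer the bounds from $\tanh$ using the identity $\frac{d}{dz}\log(1+e^z) = \frac{1}{2} + \frac{1}{2}\tanh(z/2)$ already observed in the chapter: this scales the $(k-1)$-st $\tanh$ coefficient by a factor of $1/(k \cdot 2^{k-1})$, automatically contracting every bound. The expansion $\log 2 + \tfrac{1}{2}z + \tfrac{1}{8}z^{2} + 0\cdot z^{3} - \tfrac{1}{192}z^{4} + O(z^{6})$ displayed in the chapter makes (i) and (ii) visible at a glance, with $a_0 = \log 2 < 1$, all nonzero higher coefficients strictly less than $1$ in modulus, and $a_3 = 0$ furnishing the vanishing coefficient.

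The only genuine obstacle is the saturating coefficient $|a_1| = 1$ in the $\tanh$ case, which narrowly violates the stated strict inequality of the preceding theorem. The preliminary observation above is precisely the device to overcome it: since only $1 - |a_j| \geq 0$ is needed in the contractive containment step (producing a zero coefficient, rather than a strictly positive one, in $\omega - \kappa$ at the index $k=1$), the theorem's conclusion is valid under the weak hypothesis $|a_k| \leq 1$. Combining this with the verification of (i) and (ii) for both activations, the corollary follows.
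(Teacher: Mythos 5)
Your proof is correct and does exactly what the corollary requires: verify that both activation kernels satisfy the hypotheses of the preceding theorem. The paper supplies no argument here, so there is no competing proof to compare against, but your write-up fills the gap more carefully than the paper's silence. In particular, you have caught a genuine (if minor) mismatch that the paper glosses over: the theorem as stated demands strict inequality $|a_k| < 1$, while for $\tanh$ the leading coefficient saturates it, $a_1 = 1$. Your preliminary observation that the theorem's proof only uses $1 - |a_j| \geq 0$ to make $\omega - \kappa$ a kernel function, so $|a_k| \leq 1$ already suffices, is sound: the coefficient of $y^*x$ in $\omega - \kappa$ becomes $0$ rather than strictly positive, which is still admissible for a kernel. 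An alternative repair, if you wanted to leave the theorem's hypothesis untouched, is to rescale the kernel by a constant $c \in (0,1)$; this multiplies every Taylor coefficient by $c$ but does not change the set of functions in the induced RKKS, so one can apply the theorem verbatim to $cK$ and transfer the proper containment back.

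One small computational slip in the softplus case: from $\tfrac{d}{dz}\log(1+e^z) = \tfrac{1}{2} + \tfrac{1}{2}\tanh(z/2)$, equating coefficients of $z^{k-1}$ gives $k\,a_k = b_{k-1}/2^k$, i.e., the scaling factor is $1/(k \cdot 2^k)$ rather than $1/(k \cdot 2^{k-1})$. This is, if anything, a stronger contraction, so the bound $|a_k| < 1$ for $k \geq 2$ survives intact; and since $\tanh$ is odd, one gets not just $a_3 = 0$ but $a_{2j+1} = 0$ for every $j \geq 1$. Neither slip affects the conclusion.
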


To show the power that \enquote{filling up the gaps} has (via Theorem \ref{thm: assochilb}), consider the plots in Figure \ref{fig: filled1} and \ref{fig: filled2}. The green and red lines included are identical to those given in Figures \ref{fig: tanh1} and \ref{fig: tanh2}. The purple line added is the result of including the power series coefficients which went unused in the Hilbert kernel associated to hyperbolic tangent. This simple manipulation improved performance by about one order of magnitude.

\begin{figure}[H]
\begin{subfigure}{.5\textwidth}
  \centering
  \includegraphics[width=.8\linewidth]{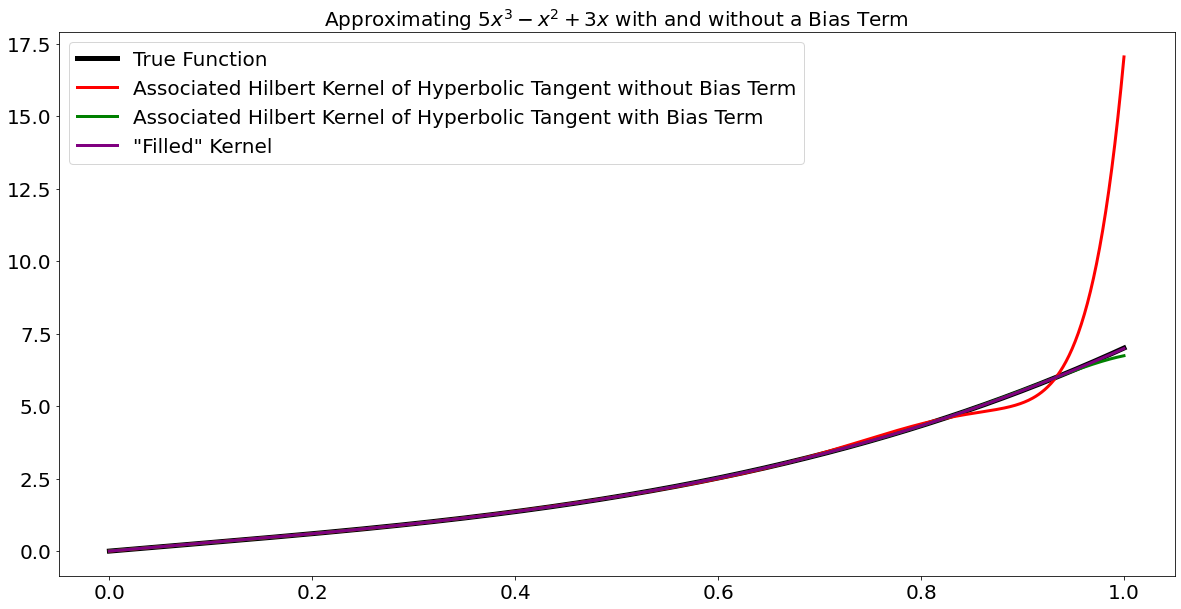}
  \caption{True function and interpolating estimates}
  \label{fig: filled1}
\end{subfigure}%
\begin{subfigure}{.5\textwidth}
  \centering
  \includegraphics[width=.8\linewidth]{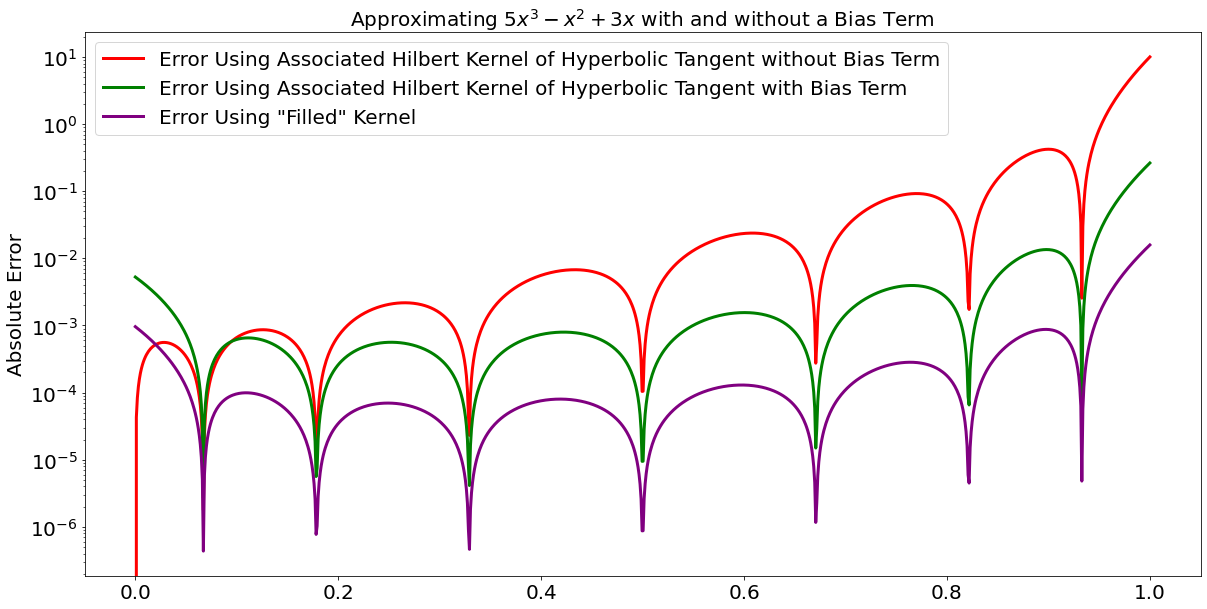}
  \caption{Error}
  \label{fig: filled2}
\end{subfigure}
\caption{Interpolating a polynomial at $7$ nodes using three methods. Note the simple act of \enquote{filling up the gaps} in the power series coefficients significantly improved performance.}
\label{fig:fig}
\end{figure}

\chapter{Applying RKHS to Training Neural Networks}\label{ch: marriage}

Consider the two-layer neural network 

\begin{equation}
    F(x) = A_2f(A_1x)
\end{equation}
and a training set $\mathcal{A} = \{(x_1, \lambda_1), (x_2, \lambda_2), \dots (x_n, \lambda_n)\}$. 

In general we want to find $A_1, A_2$, such that $F(x_j) = \lambda_j$ for all $j$. Moreover, we want our neural network to \textit{generalize}, i.e. perform well on some set $\{(x_{n + 1}, \lambda_{n + 1}), (x_{n + 2}, \lambda_{n + 2}), \dots (x_{n + m}, \lambda_{n + m})\}$ that we have not yet seen. There is in general no way of universally guaranteeing that one's neural network can generalize, unless one already knows the true, underlying function which creates such training and testing sets. Often, the best one can hope for out of a neural network is that it is \enquote{well-behaved} on some set $\mathcal{X}$ of inputs. Here, \enquote{well-behaved} can mean Lipschitz, bi-Lipschitz, $C^1$, $C^{\infty}$, bounded, etc. In this paper, we train train neural networks to be of minimal norm in an RKHS.

First, note that the resulting RKHS interpolant of Theorem \ref{thm: interpolation} is 

\begin{equation}
    F(z) = \sum\limits_{j=1}^{n}(\mathcal{P}^{-1}\mathbf{\lambda})_{j}k_{x_j}(z).
\end{equation}

Now, if $k_{x_j}(z)$ is of the form $k_{x_j}(z) = f(x_j^*z)$, then we get

\begin{equation}
    F(z) = \sum\limits_{j=1}^{n}(\mathcal{P}^{-1}\mathbf{\lambda})_{j}f(x_j^*z).
\end{equation}

If $f$ is elementwise, then we can simplify even further. Let $A_1 \in \mathbb{C}^{d \times n}$ denote the matrix whose columns are the training points $x_1, \dots, x_n$. Also, let $A_2 := (\mathcal{P}^{-1}\mathbf{\lambda})$. Then

\begin{equation}
     F(z) = A_2f(A_1^*z).
\end{equation}

Keep in mind that interpolation also works in a Krein space, so long as $\mathcal{P}$ is invertible.

\subsection{Bias Term}

Given the interpolation algorithm above, a constant term in an activation function's formal power series expansion corresponds to what neural network practitioners call a \textit{bias term}. If an activation function $f$ induces a kernel function of the form

\begin{equation}
    K(x, y) = f(y^*x),
\end{equation}
and $f$ has a power series expansion with a constant term, i.e.

\begin{equation}
    f(z) = c_0 + c_1 z + ... \qquad c_0 \neq 0,
\end{equation}
then $c_0$ above has the effect of inserting a $1$ into the first dimension of every input vector to our neural network.

Specifically, neural network practitioners often use the structure 

\begin{equation}
    F(x) = A_2 \begin{bmatrix}
    1 \\
    [f(A_1x)]_1 \\
    [f(A_1x)]_2 \\
    \vdots \\
    [f(A_1x)]_N
    \end{bmatrix}
\end{equation}
in their neural networks (note $A_2$ must be augmented for this to work). The machinery of RKHS developed above (in particular, Theorem \ref{thm: polynomials} and Figures \ref{fig: tanh1} and \ref{fig: tanh2}) yields rigorous reasons for doing so.

\subsection{Inferiority of Hyperbolic Tangent and ReLU Activations}\label{subsec: inferior}

The primary reason for introducing RKKS is many neural network practitioners use neural networks which fall under the heading of RKKS interpolants. Specifically, two classes of functions are widely used in practice, and fall into the RKKS framework: the Rectified Linear Unit (ReLU) $\mathrm{ReLU}(x) = \max\{0, x\}$, and sigmoidal functions, i.e. those functions $f$ such that $\lim\limits_{x \rightarrow -\infty} f(x) = -1$ (or $0$) and $\lim\limits_{x \rightarrow \infty} f(x) = 1$. The UAT of Cybenko \cite{cybenko1989approximation} and Perekrestenko et al. \cite{perekrestenko2018universal} have resulted in an attraction to these activations, respectively. In addition, the ReLU activation function comes with the perhaps advantageous property that its gradient is trivial to compute. 

In Agler and McCarthy \cite{agler2000complete}, it is shown that the Sobolev kernel on $[0, 1]$ is a complete Nevanlinna-Pick kernel, and so the expressivity of the piecewise linear activation ReLU is inferior to the expressivity of the $\omega-$kernel. Thus, in conjunction with Corollary \ref{cor: inferior},  we find the activation function $f(z) = \frac{1}{1 - z}$ is strictly more expressive than the hyperbolic tangent, softplus, and ReLU. 

It should be emphasized here that the activation $f(z) = \frac{1}{1 - z}$ is more expressive than $\tanh$ and softplus by contractive containment. However, in the context of ReLU, the superior expressivity does not follow from contractive containment (note piecewise linear functions are not in $H^2$). Here, the superior expressivity is due to the Agler-McCarthy theorem \cite{agler2000complete}. In this case, there exists a function $g: [0, 1] \rightarrow \mathbb{B}^{\aleph_0}$ and some nonvanishing function $\delta$ on $[0, 1]$ such that, for any function $f$ in the Sobolev RKHS, $(\delta \circ g^{-1})(f \circ g^{-1})$ is analytic on $\mathbb{B}^{\aleph_0}$. In particular, the Sobolev space is, after normalization, the restriction of the RKHS induced by the $\omega-$kernel on $\mathbb{B}^{\aleph_0}$ to a subspace spanned by a set of kernel functions \cite{agler2000complete}.

\section{Deep Learning}\label{sec: deeplearning}

For the sake of completeness, we extend our theory to that of deep learning by use of the Agler-McCarthy theorem and the activation function induced by the $\omega-$kernel. It should be noted that Gu has indirectly studied such a theory through the lens of \textit{Generalized Matrix-Valued Continued Fraction} (GMVCF) \cite{gu2001generalized}. In particular, an $N-$layer neural network with the activation $f(z) = \frac{1}{1 - z}$, is of the form

\begin{equation}\label{eq:continuedfraction}
    F(x) = f_{N}(A_{N}f_{N - 1}(A_{N - 1} \dots f_{1}(A_1 x))) = \frac{\textbf{e}}{\textbf{1} - A_{N} \frac{\textbf{e}}{\textbf{e} - A_{N - 1}\frac{\textbf{e}}{\ddots \frac{\textbf{e}}{\textbf{e} - A_{1} x}}}},
\end{equation}
where $\textbf{e}$ denotes the vector of ones $(1, 1, \dots, 1)^T$ and the division here is elementwise. The fraction in \eqref{eq:continuedfraction} is called a GMVCF. See \cite{gu2001generalized} for details. In general, the expressivity proofs of GMVCF follow the same arguments as those in the classical theory of continued fractions, e.g. \cite{wall2018analytic}. This is precisely because we are not using matrix inverses in the classical sense.

In general, however, the multi-dimensional analogue of GMVCF interpolation has not been studied. We create such a theory here. Note that our theory will fall into the class of training methods called \textit{Forward Thinking}, i.e. our method will train neural networks one layer at a time \cite{hettinger2017forward}.

The problem with iterating our interpolation algorithm via Theorem \ref{thm: interpolation} is that the result after each iteration may no longer lie inside the unit ball. We would like to \textit{scale} the data back to the unit ball after each iteration. The problem is that there does not exist a constant factor which reliably does so, as analytic functions need not be bounded on the $d-$sphere. The Agler-McCarthy Theorem, however, states that one need not be restricted to scaling by a constant. Indeed, scaling by non-vanishing functions $\delta(\cdot)$ is not a hindrance toward the expressivity of our neural network. The RKHS induced by such a rescaled kernel function is merely a rescaled copy of the original RKHS.

The problem now becomes finding a function $\delta(\cdot)$ such that

\begin{equation}\label{eq: contractiveke}
    |K(x, y)| = \Big|\frac{\overline{\delta(x)}\delta(y)}{1 - y^*x}\Big| < 1
\end{equation}
$\delta(c) = 1 - ||c||$ will do, by the following calculation:

\begin{multline}
    1 - y^*x = 1 - \langle y, x \rangle \geq 1 - ||x||||y|| \\
    > 1 - ||x|| > 1 - ||x|| - (||y|| - ||x||||y||)
\end{multline}

\eqref{eq: contractiveke} allows one to form deep neural networks without the need to perform some artificial scaling between layers.

\begin{remark}
The deep neural network resulting from iterating the kernel function above is not a feedforward neural network, because it does not rely solely on the term $y^*x$. Rather, it is a neural network with skip connections, as defined in \eqref{eq:skipconn}, i.e. each layer takes the previous layer as input, along with some other layer before that. In our case, the skip connections connect every other layer and the nonzero connections $\{B_j^{(k)}\}$ as defined in \eqref{eq:skipconn} are all equal to the identity $\mathbb{I}$.
\end{remark}

The Agler-McCarthy theorem implies the activation function 

\begin{equation}\label{eq:szego}
    f(z) = \frac{1}{1 - z}
\end{equation}
will be of supreme importance (note the division here is elementwise). It is the activation function associated with the $\omega-$kernel.

In one dimension this is the \textit{Szeg\"o kernel}. In higher dimensions however, the $\omega-$kernel is \textit{not} the multivariate Szeg\"o kernel.

The Agler-McCarthy theorem (Theorem \ref{thm: agler}) states that the RKHS associated with the $\omega-$kernel is \textit{universal} in the sense that every RKHS on the unit ball with a complete Nevanlinna-Pick kernel is a restriction of this space \cite{agler2000complete}.

There is a potential benefit to creating an interpolatory neural network with more layers than we have been using (two, because of the two weight matrices). In particular, an $n-$layer neural network of the form described above still does basic, RKHS interpolation, but now the kernel has been changed. Indeed, the first $n - 2$ layers act as a nonlinear transformation of the data $\phi: \mathbb{B}^m \rightarrow \mathbb{B}^N$ from one unit ball to another, where $m$ is the dimension of the input data and $N$ is the number of interpolation nodes. Note that $K\circ \phi$ is a kernel function, when $K$ is a kernel function (cf. Proposition 5.13 in \cite{paulsen2016introduction}). Depending on the new kernel and the analogous norm induced by this kernel, the resulting minimum norm interpolant may generalize better.  


To make the above reflection concrete, consider the example of learning a sine curve using a two-layer network and a three-layer network (Figures \ref{fig: deepagler10} and \ref{fig: deepagler100}) and uniformly random interpolation nodes.

In Figure \ref{fig: deepagler10}, the most prominent phenomenon is the familiar thrashing nature of the red curve, generated by the two-layer neural network using the $\omega-$kernel. The green curve, however, lacks such behavior, because we have removed the poles near $\pm 1$. Indeed, let $\{y_j\}_{j=1}^{10}$ be our interpolation nodes. After applying the first layer of our neural network to $x=\pm 1$, we are left with

\begin{equation}
    \begin{bmatrix}
    \frac{(1 - ||y_1||)(1 - ||x||)}{1 - y_1^*x} \\
    \frac{(1 - ||y_2||)(1 - ||x||)}{1 - y_2^*x} \\
    \vdots \\
    \frac{(1 - ||y_N||)(1 - ||x||)}{1 - y_N^*x}
    \end{bmatrix} = \begin{bmatrix}
    \frac{(1 - ||y_1||)(1 - 1)}{1 - y_1^*1} \\
    \frac{(1 - ||y_2||)(1 - 1)}{1 - y_2^*1} \\
    \vdots \\
    \frac{(1 - ||y_N||)(1 - 1)}{1 - y_N^*1}
    \end{bmatrix} = \begin{bmatrix}
    0 \\
    0 \\
    \vdots \\
    0 \\
    \end{bmatrix}
\end{equation}

Therefore, using a $3-$layer network to interpolate at $\pm 1$ in this case numerically corresponds to performing RKHS at $0$. This is also why the green curve can withstand the problem of interpolating many more nodes than the red curve (Figure \ref{fig: deepagler100}).

While the interpolation algorithm will have no poles here (so long as the interpolation data $\{y_j\}$ have moduli less than one), this does show that the neural network's value at $x=-1$ will equal its value at $x=1$. Again, this is because the first layer of our $3-$layer neural network maps both $+1$ and $-1$ to $0$. This can lead to problems (e.g. Figure \ref{fig: deepaglerline}). 

The second phenomenon of note in Figure \ref{fig: deepagler10} is the \enquote{cusp} generated by the green curve near the origin. This is because poles are now allowed to be formed inside the unit disk. Again, let $\{y_j\}_{j=1}^{10}$ be our interpolation nodes. After applying the first layer of our neural network to $x=0$, we have

\begin{equation}
    \begin{bmatrix}
    \frac{(1 - ||y_1||)(1 - ||x||)}{1 - y_1^*x} \\
    \frac{(1 - ||y_2||)(1 - ||x||)}{1 - y_2^*x} \\
    \vdots \\
    \frac{(1 - ||y_N||)(1 - ||x||)}{1 - y_N^*x}
    \end{bmatrix} = \begin{bmatrix}
    \frac{(1 - ||y_1||)(1 - 0)}{1 - y_1^*0} \\
    \frac{(1 - ||y_2||)(1 - 0)}{1 - y_2^*0} \\
    \vdots \\
    \frac{(1 - ||y_N||)(1 - 0)}{1 - y_N^*0}
    \end{bmatrix} = \begin{bmatrix}
    (1 - ||y_1||) \\
    (1 - ||y_2||) \\
    \vdots \\
    (1 - ||y_N||) \\
    \end{bmatrix}
\end{equation}

Of course, this arithmetic shows that we need an additional normalizing term in our kernel function, namely $\frac{1}{\sqrt{N}}$. More importantly, it shows us that interpolating data near zero in $3$ layers is tantamount to interpolating data near the boundary of the unit disk in $2$ layers. 

\begin{figure}[H]
\begin{subfigure}{.5\textwidth}
  \centering
  \includegraphics[width=.8\linewidth]{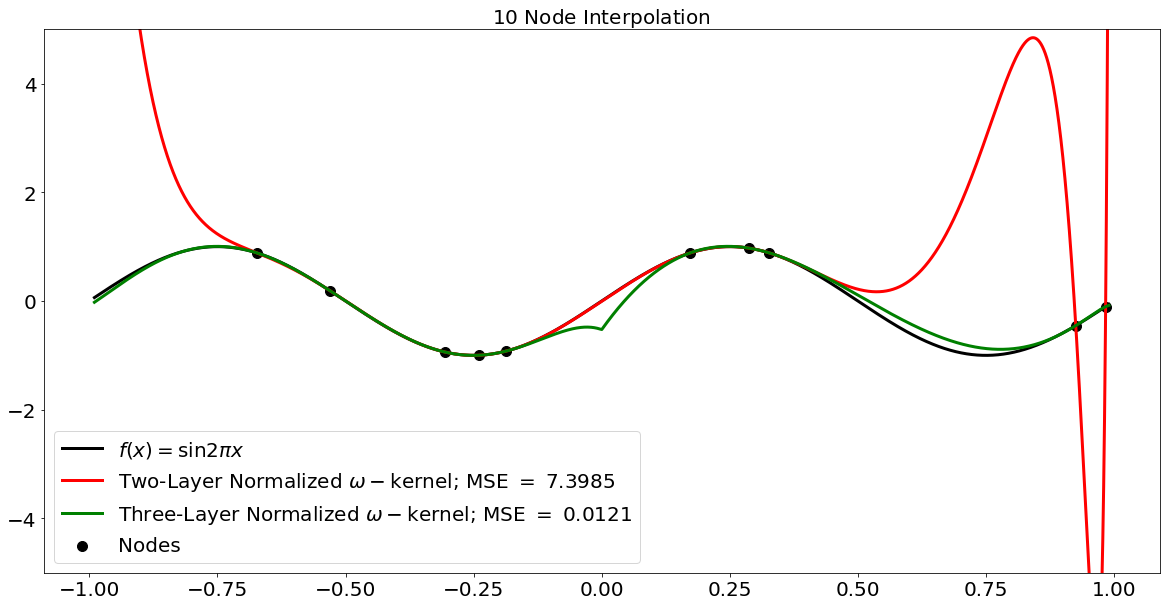}
  \caption{$10$ Training Nodes}
  \label{fig: deepagler10}
\end{subfigure}%
\begin{subfigure}{.5\textwidth}
  \centering
  \includegraphics[width=.8\linewidth]{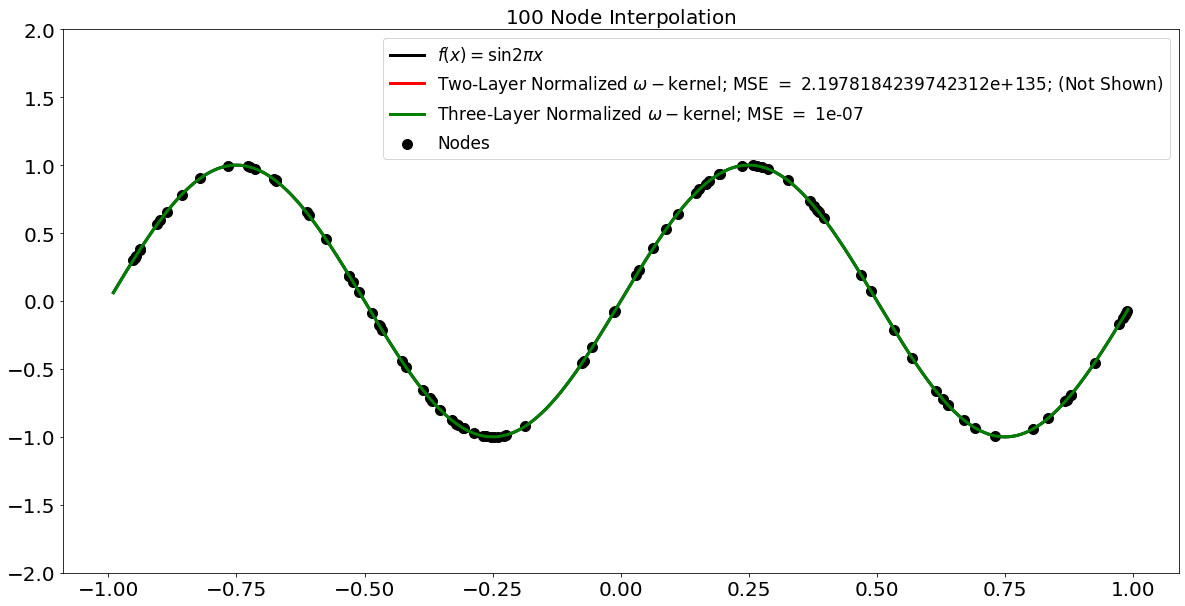}
  \caption{$100$ Training Nodes}
  \label{fig: deepagler100}
\end{subfigure}
\caption{Adding another layer to our neural network can allow our neural network more expressivity, as one can see by the \enquote{cusp} made by the green curve in \ref{fig: deepagler10}. The better conditioning of such a problem means we have the added benefit of introducing more interpolation nodes without numerical blow-up.}
\label{fig:fig}
\end{figure}

\begin{figure}[H]
    \centering
    \includegraphics[scale=.3]{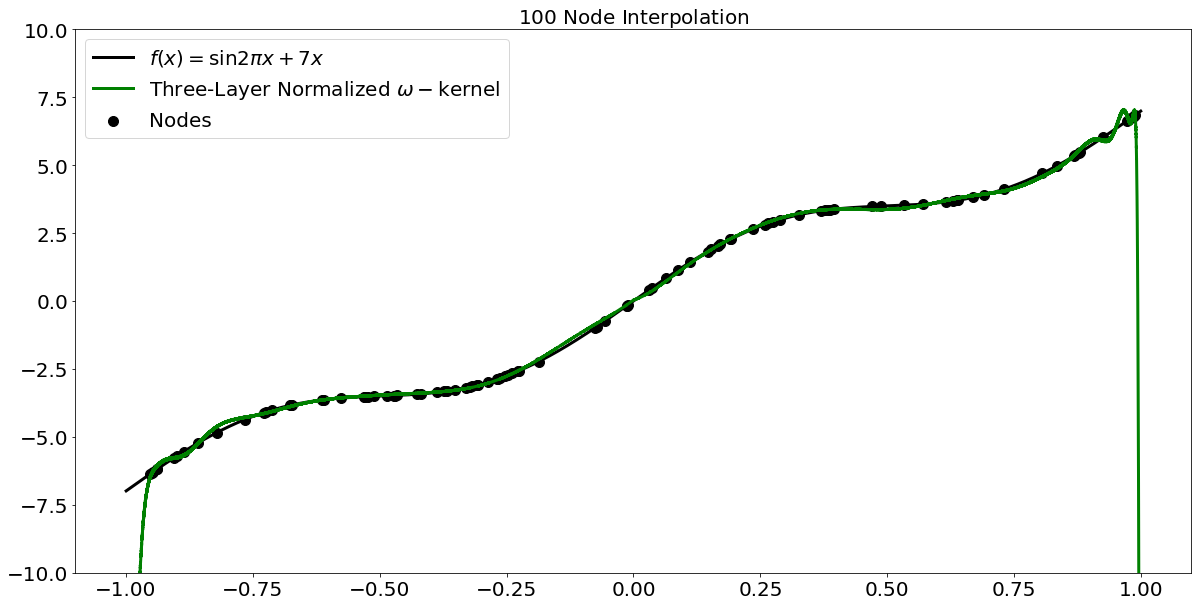}
    \caption{Here, we are attempting to learn the function $f(x) = \sin 2 \pi x + 7x$ via a $3-$layer neural network. Note the green curve shown here exhibits strange behavior near the points $x=\pm 1$, because the neural network corresponding to the green curve needs to take on the same value at both these points. This is because the first layer of our neural network maps $+1$ and $-1$ to the same point ($0$).}
    \label{fig: deepaglerline}
\end{figure}

\section{A Stable, Accurate Method for Obtaining ReLU Interpolants}\label{sec: relu}

While the expressivity of ReLU is inferior to that of the activation function $f(z) = \frac{1}{1 - z}$, as stated in Subsection \ref{subsec: inferior}, it is quite easy to train a ReLU network using interpolation, so we include a method of doing so here. Consider the function $\mathfrak{K}: [0, 1]^d \times [0, 1]^d \rightarrow \mathbb{R}$

\begin{equation}\label{eq: linearkernel}
    \mathfrak{K}(x, y) = \min\{e^Tx, e^Ty\} - y^Tx
\end{equation}
This is a natural, multidimensional analogue of the kernel function on the Sobolev space 
\begin{equation} \quad
    \mathcal{H} = \{f: [0, 1] \rightarrow \mathbb{R} \ \Big| \ f \text{ is absolutely continuous, } f(0)=f(1)=0, \ f' \in L^2[0, 1]\}
\end{equation}

whose kernel function is 

\begin{equation}
    K(x, y) = \begin{cases}
    (1 - y)x, \qquad x \leq y \\
    (1 - x)y, \qquad x \geq y
    \end{cases}
\end{equation}

The multidimensional analogue is indeed a kernel function, because 

\begin{equation}
    e^Tx_j \geq x_i^Tx_j, \qquad e^Tx_i \geq x_i^Tx_j
\end{equation}
for all $x_j, x_j \in [0, 1]^d$, and we merely need the matrix whose elements are $K(x_i, x_j)$ to be nonnegative semidefinite. Note that the neural network associated with such a kernel function will have skip connections, as defined in \eqref{eq:skipconn}, which connect every other layer. The nonzero skip connections $\{B_j^{(k)}\}$ will be equal to the identity $\mathbb{I}$.

The kernel function above has the property that the interpolants it produces in the context of Theorem \ref{thm: interpolation} are piecewise linear, where in higher dimensions \enquote{piecewise linear} is taken to mean the graph is a collection of sections of hyperplanes. Note that this is a result of the stationarity condition of the interpolation problem in the Sobolev space, as the space does \textit{not} consist only of piecewise linear functions. 


Considering ReLU neural networks are merely piecewise linear functions, this means we can essentially generate interpolating ReLU networks without using descent methods. Approximation by piecewise linear functions has already been studied in great detail, and a digression into the theory would take us too far afield. In particular, Perekrestenko's paper on the universality of ReLU is quite applicable here \cite{perekrestenko2018universal}. See also \cite{malozemov2018examples} for uniform approximation results in the one-dimensional case.

\begin{remark}
Because $\mathfrak{K}$ does not depend solely on the inner product $y^*x$, the resulting interpolants of Theorem \ref{thm: interpolation} are not feedforward neural networks (they are neural networks with skip connections). This is why we denote the kernel by the fraktur $\mathfrak{K}$, so as to distinguish it from the rest of the kernels we study.
\end{remark}

\begin{remark}
As stated in \cite{agler2000complete}, it was proven in \cite{agler1990nevanlinna} that the Sobolev space in one dimension has a complete Nevanlinna-Pick kernel and so has expressivity no greater than the RKHS induced by the $\omega-$kernel.
\end{remark}

To see the power of $\mathfrak{K}-$interpolants, consider the problem of approximating $\phi$ given by \eqref{eq: homotopy}.

\begin{figure}[H]
\begin{subfigure}{.5\textwidth}
  \centering
  \includegraphics[width=.8\linewidth]{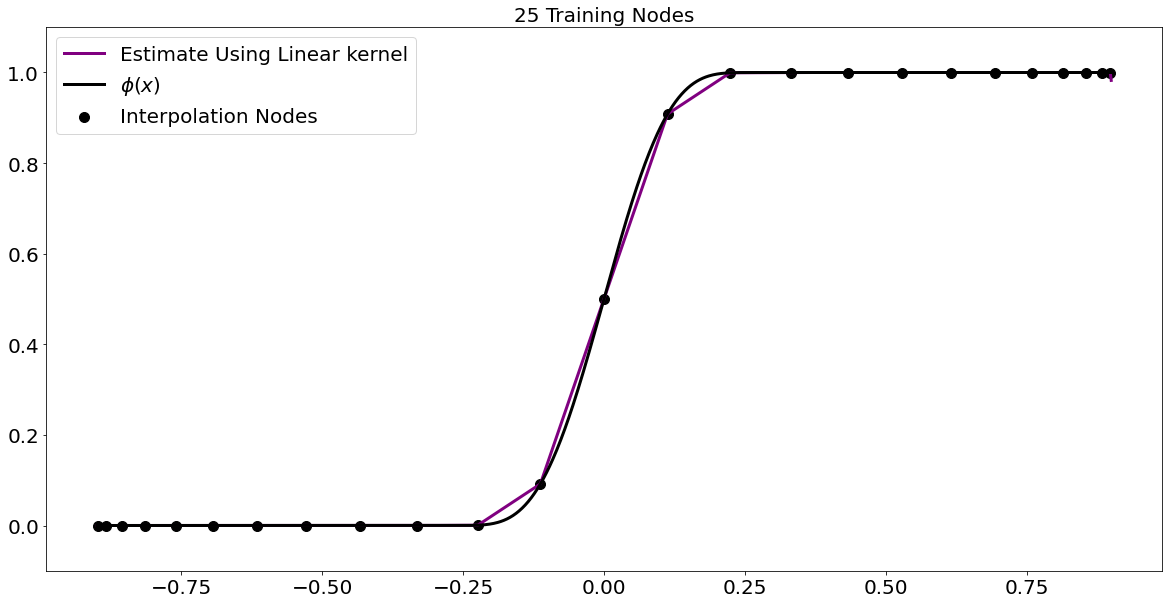}
  \caption{$10$ Training Nodes}
  \label{fig: homotopylinear25}
\end{subfigure}%
\begin{subfigure}{.5\textwidth}
  \centering
  \includegraphics[width=.8\linewidth]{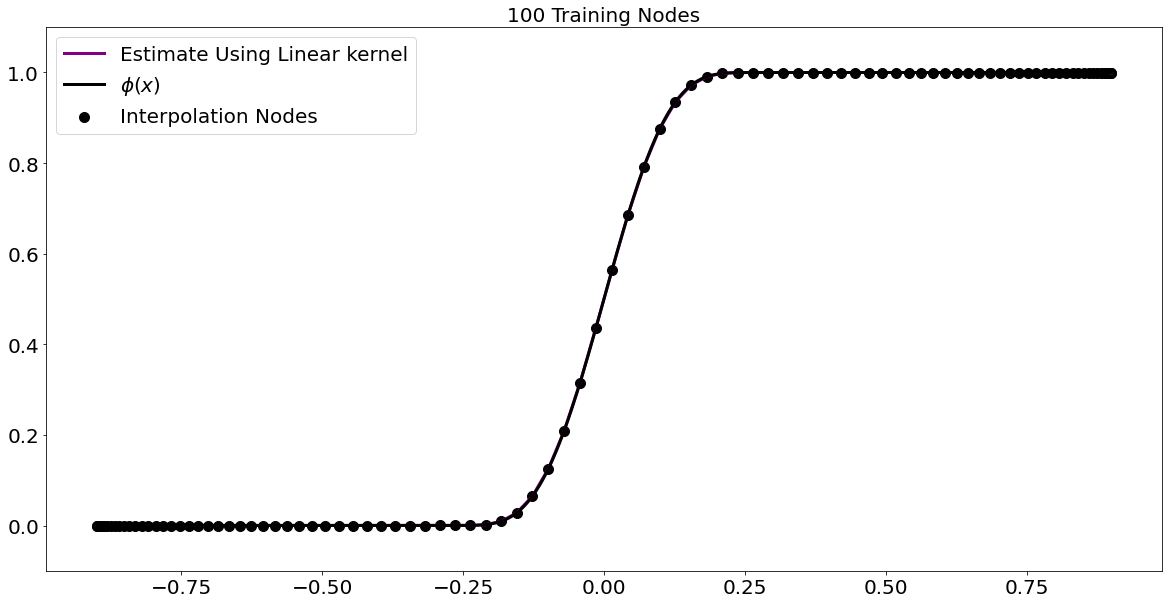}
  \caption{$100$ Training Nodes}
  \label{fig: homotopylinear100}
\end{subfigure}
\caption{The $\mathfrak{K}-$approximation of $\phi$ is far more stable under the addition of more interpolation nodes. Here, we see that, even when given $100$ interpolation nodes, the $\mathfrak{K}-$interpolant is not prone to numerical instability.}
\label{fig:fig}
\end{figure}

Note that the $\mathfrak{K}-$interpolant remains quite stable, even as the number of interpolation nodes significantly increases, as the illustrations in Figures \ref{fig: homotopylinear25} and \ref{fig: homotopylinear100} demonstrate.

Let us consider another example, which is perhaps even more difficult to learn by other methods.

\begin{figure}[H]
    \centering
    \includegraphics[scale=.3]{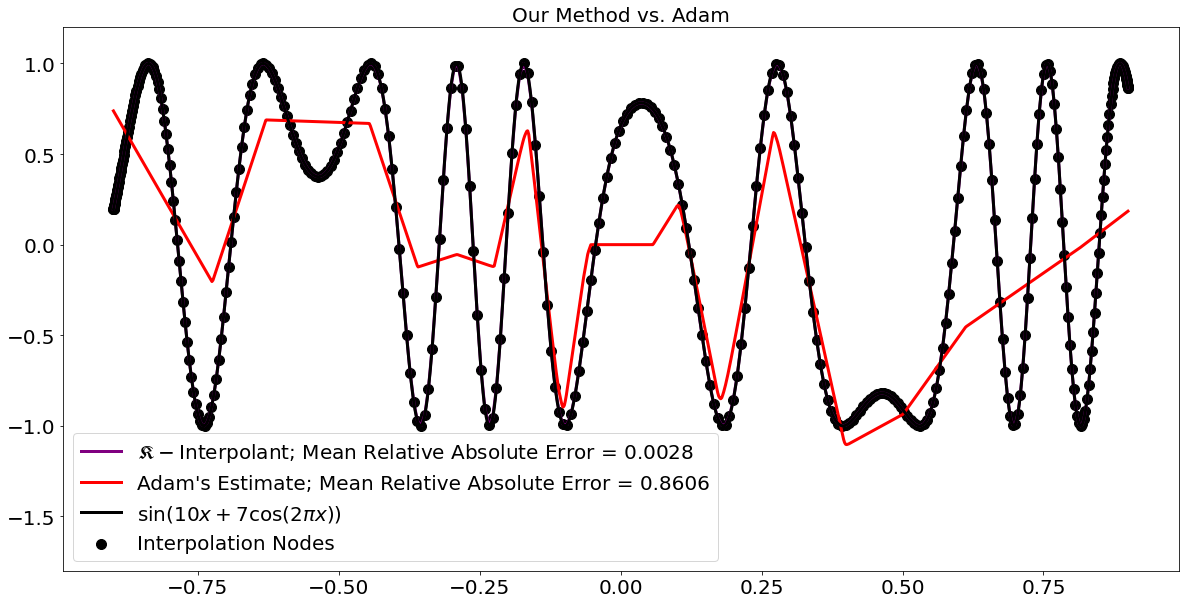}
    \caption{The $\mathfrak{K}-$interpolation algorithm and the Adam algorithm were both trained on the same dataset of $400$ Chebyshev nodes, shifted to be on the interval $[-.9, .9]$. Adam is having a significant amount of difficulty learning our function, because each random batch (of size $64$) per gradient step will have a different behavior.}
    \label{fig: linearkernelvsadam}
\end{figure}

The function 

\begin{equation}
    f(x) = \sin(10x + 7\cos(2\pi x))
\end{equation}
has an ever-changing frequency and so should be difficult to learn by descent methods, even with batch learning, as any two random batches one takes as training examples will have significantly different behaviors. We trained our $\mathfrak{K}-$interpolant and a classical ReLU network on the same $400$, shifted Chebyshev nodes in $[-.9, .9]$. We used the Adam optimizer with a learning rate of $10^{-2}$ (the best we saw) and batches of size $64$ over $10,000$ epochs to train the ReLU network \cite{adam}. We used mean square error as our loss function. In Figure \ref{fig: linearkernelvsadam} we see that Adam was unable to train the classical neural network to the same accuracy as our $\mathfrak{K}-$interpolation algorithm did. Moreover, the training time for Adam was $51.2$ seconds, whereas the training time for the $\mathfrak{K}-$interpolation algorithm was $1.62$ seconds.

\chapter{Cotlar-Sadosky Theory}\label{ch: cs}

\section{Background: The AAK Theorem}

In approximation theory, it is well known that one can over-interpolate data. There are two notable reasons for which interpolation can fail. First, the given interpolation nodes may be undesirable (e.g evenly spaced under the Euclidean norm). Second, the basis of interpolating functions may be chosen poorly. These problems are ubiquitous in the theory of interpolation. In the presence of noise, these problems are amplified, because interpolating noisy data points corresponds to interpreting noise as a signal. 


We have already introduced methods of regularizing our interpolants to combat these issues. In particular, RKHS interpolation results in functions of minimum norm; one can regularize further by performing interpolation in a reproducing kernel Krein space. These methods will tend to reduce \textit{thrashing}, or adverse behavior between interpolation nodes. On the other hand, there are techniques other than regularization which can further improve performance, particularly in noisy environments. Among such methods are those of \textit{model reduction}. These reduce the intrinsic complexity of one's model in the hope that such complexity was fitting more noise than signal.

To develop the novel theory of model reduction on neural networks, we first introduce the celebrated theorem of Adamjan, Arov, and Krein (AAK theorem) \cite{adamyan1971analytic}, as well as a multidimensional version given by Cotlar and Sadosky \cite{cotlar1996two}. We will then prove our marginal AAK theorem, which will be directly applicable to neural networks. See \cite{chui1997discrete}, Chapter 4, Section 3, for an exposition on the classical AAK theory.

\begin{defn}
If $f(z) = \sum\limits_{n=-\infty}^{\infty}f_{n}z^{-n}$, then the \textbf{Hankel operator}, with symbol $f$, $\Gamma_f$ is 

\begin{equation}
    \Gamma_f = \begin{bmatrix}
    f_{1} & f_{2} & f_{3} & \dots \\
    f_{2} & f_{3} & f_{4} & \dots \\
    f_{3} & f_{4} & f_{5} & \dots \\
    \vdots
    \end{bmatrix}
\end{equation}
and the \textbf{Hankel norm} of $f$ (or $\Gamma_f$) is given by

\begin{equation}
    ||f||_{\Gamma} = \sup_{||x||=1}||\Gamma x||_{\ell^{2}}
\end{equation}
\end{defn}

\begin{theorem}[Nehari]
Let $f \in L^{\infty}(\mathbb{T})$. Then 

\begin{equation}
    ||f||_{\Gamma} = \inf_{g \in H^{\infty}}||f - g||_{L^{\infty}}
\end{equation}
\end{theorem}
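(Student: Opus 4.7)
The plan is to establish the two inequalities $\|f\|_\Gamma \leq \inf_{g \in H^\infty} \|f-g\|_{L^\infty}$ and $\|f\|_\Gamma \geq \inf_{g \in H^\infty} \|f-g\|_{L^\infty}$ separately, with the second requiring the bulk of the work.

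For the first inequality, I note that if $g \in H^\infty$ then its Laurent expansion has no $z^{-n}$ terms for $n \geq 1$, so $\Gamma_g = 0$ and therefore $\Gamma_f = \Gamma_{f-g}$. Realizing the Hankel operator as $\Gamma_f = P_- M_f|_{H^2}$, where $M_f$ is multiplication by $f$ on $L^2(\mathbb{T})$ and $P_-$ is the orthogonal projection onto the anti-analytic subspace, the operator-norm estimate
\begin{equation}
\|\Gamma_f\| = \|P_- M_{f-g}|_{H^2}\| \leq \|M_{f-g}\| = \|f-g\|_{L^\infty}
\end{equation}
yields the first inequality after taking the infimum over $g \in H^\infty$.

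For the second inequality, I would start with the bounded bilinear form $B(a,b) = \langle \Gamma_f a, b \rangle$ on $H^2 \times H^2$, so $|B(a,b)| \leq \|\Gamma_f\| \|a\|_{H^2} \|b\|_{H^2}$. Using the Riesz factorization of $H^1$ -- every $\varphi \in H^1$ vanishing at the origin factors as $\varphi = ab$ with $a, b \in H^2$ and $\|\varphi\|_{H^1} = \|a\|_{H^2}\|b\|_{H^2}$ -- the bilinear form $B$ collapses to a well-defined bounded linear functional $L(\varphi) := \int_{\mathbb{T}} f\,\varphi\,dm$ on $H^1_0 := \{\varphi \in H^1 : \varphi(0) = 0\}$ with $\|L\| \leq \|\Gamma_f\|$.

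I would then apply Hahn--Banach to extend $L$ to a functional $\tilde{L}$ on $L^1(\mathbb{T})$ of the same norm, and invoke the duality $(L^1)^* = L^\infty$ to produce $h \in L^\infty(\mathbb{T})$ with $\|h\|_{L^\infty} = \|\tilde{L}\| \leq \|\Gamma_f\|$ and $\tilde{L}(\varphi) = \int h\,\varphi\,dm$. The relation $\int (f - h)\varphi\, dm = 0$ for every $\varphi \in H^1_0$ translates, via Fourier expansion, into the vanishing of the $z^{-n}$ coefficients of $f-h$ for $n \geq 1$, so $g := f - h \in H^\infty$. Consequently $\|f - g\|_{L^\infty} = \|h\|_{L^\infty} \leq \|\Gamma_f\|$, completing the proof.

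The main obstacle is the Riesz factorization step: without the norm equality $\|\varphi\|_{H^1} = \|a\|_{H^2}\|b\|_{H^2}$, the bilinear form $B$ would yield a functional on $H^1_0$ controlled only up to an unspecified constant, and the resulting approximation bound would no longer be sharp. The factorization relies on Beurling's inner--outer decomposition of $H^2$, which is the classical analytic tool underlying the whole argument.
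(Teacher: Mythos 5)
The paper states Nehari's theorem without proof—it is cited as classical background and the text immediately moves on to its corollaries—so there is no in-paper argument to compare against; I can only assess your proof on its own terms. It is the standard duality proof and is substantively correct. The easy inequality, via $\Gamma_g = 0$ for $g \in H^\infty$ together with $\|P_- M_h\| \le \|h\|_{L^\infty}$, is fine. For the hard inequality you correctly identify the chain: the Hankel form depends only on the product $ab$; Riesz factorization of $H^1$ (via inner--outer decomposition) converts the bilinear bound into a linear functional on $H^1_0$ of norm at most $\|\Gamma_f\|$, with the norm \emph{equality} $\|a\|_2\|b\|_2 = \|\varphi\|_1$ being what keeps the estimate sharp; Hahn--Banach extends it to $L^1$; $(L^1)^* = L^\infty$ produces $h$ with $\|h\|_\infty \le \|\Gamma_f\|$; and annihilation of $H^1_0$ forces $f - h \in H^\infty$.

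One notational slip worth fixing: since $\Gamma_f = P_- M_f|_{H^2}$ takes values in $(H^2)^\perp$, the expression $\langle \Gamma_f a, b\rangle$ with $a,b \in H^2$ vanishes identically under the usual $L^2$ pairing, and in any case the sesquilinear pairing depends on $a\bar b$, not on $ab$, so it would not descend through Riesz factorization. The form you actually want is the genuinely bilinear one, e.g.\ $B(a,b) = \langle \Gamma_f a, \bar b\rangle$, or equivalently $\sum_{i,j}(\Gamma_f)_{ij}\,a_j\,b_i$ in matrix form (symmetric since the Hankel matrix is). This is a bookkeeping issue only and does not affect the substance of the argument.
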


Nehari's theorem implies many important facts. First, the Hankel norm is invariant under addition by $H^{\infty}$ functions. Second, Nehari's theorem shows that $f \in L^{\infty}(\mathbb{T})$ implies $f$ has finite Hankel norm. Finally, we see that a function $h_s(z) = \sum\limits_{n=1}^{\infty}h_{n}z^{-n}$ has finite Hankel norm if and only if there exists an analytic function $h_a(z) = \sum\limits_{n=0}^{\infty}h_{-n}z^{n}$ such that $h_z(z) + h_s(z) \in L^{\infty}(\mathbb{T})$.



The following important corollary shows us not only that convergence in $L^{\infty}(\mathbb{T})$ implies convergence in Hankel norm, but also that convergence in Hankel norm implies convergence in $L^{2}(\mathbb{T})$. For notational convenience, we will begin using $L^{\infty}$ in place of $L^{\infty}(\mathbb{T})$.

\begin{corollary}[Nehari]
Let $f(z) = \sum\limits_{n=1}^{\infty}$ be in $L^{\infty}$. Then 

\begin{equation}
    ||f||_{L^2} \leq ||f||_{\Gamma} \leq ||f||_{L^{\infty}}.
\end{equation}
\end{corollary}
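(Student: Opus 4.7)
The plan is to establish the two inequalities separately, with each falling out from a direct application of the definitions together with Nehari's theorem.

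For the right-hand inequality $\|f\|_{\Gamma} \leq \|f\|_{L^{\infty}}$, I would invoke Nehari's theorem directly. Since the Hankel norm equals the infimum $\inf_{g \in H^{\infty}}\|f - g\|_{L^{\infty}}$, I would simply choose $g = 0$ (which is in $H^{\infty}$) as a competitor. This yields $\|f\|_{\Gamma} \leq \|f - 0\|_{L^{\infty}} = \|f\|_{L^{\infty}}$, which is the desired bound.

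For the left-hand inequality $\|f\|_{L^2} \leq \|f\|_{\Gamma}$, I would exploit the fact that the Hankel norm is a supremum over all unit vectors in $\ell^2$ and test it against a single well-chosen vector. Taking $x = e_1 = (1, 0, 0, \dots)^T$, the matrix-vector product $\Gamma_f x$ picks out the first column of the Hankel matrix, giving $(f_1, f_2, f_3, \dots)^T$. Thus
\begin{equation}
    \|\Gamma_f e_1\|_{\ell^2}^2 = \sum_{n=1}^{\infty}|f_n|^2 = \|f\|_{L^2}^2,
\end{equation}
where the last equality uses Parseval's identity for the Fourier expansion of $f$ on $\mathbb{T}$. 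Since $\|e_1\|_{\ell^2} = 1$, the definition of the Hankel norm as a supremum gives $\|f\|_{\Gamma} \geq \|\Gamma_f e_1\|_{\ell^2} = \|f\|_{L^2}$.

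Honestly, there is no real obstacle here; the statement is a direct corollary. The only subtle point is ensuring that the identification of $\sum_{n=1}^{\infty}|f_n|^2$ with $\|f\|_{L^2}^2$ is justified, which is just Parseval applied to the (one-sided) Fourier series of $f$ on the circle. Chaining the two inequalities yields $\|f\|_{L^2} \leq \|f\|_{\Gamma} \leq \|f\|_{L^{\infty}}$, and we are done.
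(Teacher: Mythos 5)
Your proof is correct and takes essentially the same approach as the paper: testing the Hankel operator against $e_1$ and invoking Parseval for the left inequality, and choosing $g = 0$ in Nehari's infimum for the right inequality.
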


The first inequality is easy to prove:

\begin{equation}
    ||f||_{L^2} = (\sum\limits_{n=1}^{\infty}|f_n|^2)^{1/2} = ||\Gamma e_1||_{2} \leq ||f||_{\Gamma}.
\end{equation}

The second inequality follows from Nehari's theorem:

\begin{equation}
    ||f||_{\Gamma} \leq ||f - 0||_{L^{\infty}}.
\end{equation}

Neharis's theorem is interesting per se, but the AAK theorem is more general and proves to be more applicable in practice.

To begin, we write the singular value decomposition of a Hankel operator by $\Gamma_f = U\Sigma V^*$. Note in this case, $U$ is equal to $V$, up to a sign change: $u^{(i)} = \mathrm{sgn}(\lambda_i)v^{(i)}$, where $\lambda_i$ is the $i^{th}$ eigenvalue (ordered by \textit{magnitude}) of $\Gamma_f$. Let 

\begin{equation}
    v_{+}(z) = \sum\limits_{j=1}^{\infty}v^{(m + 1)}_j z^{j-1}
\end{equation}
and

\begin{equation}
    u_{-}(z) = \sum\limits_{j=1}^{\infty}u^{(m + 1)}_j z^{-j}
\end{equation}
Finally, let 

\begin{equation}
    \mathcal{R}_m^s = \{r(z) = \frac{p_1z^{n-1} + \dots + p_k}{z^m + q_1z^{m-1} + \dots + q_m} | r \text{ has all its zeros in the open unit disk and } n \leq m\}
\end{equation}

The AAK theorem is largely a result about approximating $L^{\infty}$ functions by strictly proper, rational functions of given order. Perhaps the most surprising consequence of the AAK theorem is that we can \textit{analytically construct} the rational functions of best approximation under the Hankel norm. Again, because the Hankel norm is \enquote{sandwiched} between the $L^2$ norm and the $L^{\infty}$ norm, such a minimal result is quite desirable. In fact, it only takes a few lines of code to actually implement.

\begin{theorem}
Let $f \in L^{\infty}$ with a compact corresponding Hankel operator $\Gamma_f = U\Sigma V^*$. Then

\begin{equation}
    ||f - \hat{r}_m||_{\Gamma} = \inf_{r \in \mathcal{R}_m^s}||f - r||_{\Gamma} = s_{m + 1},
\end{equation}
where $\hat{r} = [\hat{h(z)}]_s$ and 

\begin{equation}
    \hat{h}(z) = f(z) - s_{m + 1}\frac{u_-(z)}{v_+(z)}
\end{equation}
\end{theorem}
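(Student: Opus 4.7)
The plan is to establish the equality by proving two matching inequalities, $\inf_{r \in \mathcal{R}_m^s}\|f - r\|_{\Gamma} \geq s_{m+1}$ and $\|f - \hat r_m\|_{\Gamma} \leq s_{m+1}$. The lower bound is the cleaner direction and rests on \emph{Kronecker's theorem}: the Hankel operator of a strictly proper rational function of McMillan degree at most $m$ has operator rank at most $m$. Since $g \mapsto \Gamma_g$ is linear, for every $r \in \mathcal{R}_m^s$ the operator $\Gamma_{f-r} = \Gamma_f - \Gamma_r$ has the form $\Gamma_f - K$ with $\operatorname{rank} K \leq m$. The Courant--Fischer min-max characterization of singular values of a compact operator then yields $\|\Gamma_f - K\|_{\mathrm{op}} \geq s_{m+1}(\Gamma_f)$ for any such $K$, and hence $\|f - r\|_{\Gamma} \geq s_{m+1}$ for every admissible $r$.

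For the matching upper bound I would show that the auxiliary function $\hat h(z) = f(z) - s_{m+1} u_-(z)/v_+(z)$ has essential supremum exactly $s_{m+1}$ on the unit circle, and then invoke Nehari. The crucial observation is that, translating the eigenvalue equation $\Gamma_f v^{(m+1)} = s_{m+1} u^{(m+1)}$ into the language of Laurent series, the strictly proper part of $f \cdot v_+$ equals exactly $s_{m+1} u_-$; equivalently, $f v_+ - s_{m+1} u_-$ is holomorphic in the open unit disk. The adjoint equation $\Gamma_f^* u^{(m+1)} = s_{m+1} v^{(m+1)}$ provides the dual holomorphicity statement on the exterior. Combining these with the normalizations $\|v_+\|_{L^2} = \|u_-\|_{L^2} = 1$ and the inner-product identity $\langle f v_+, u_-\rangle_{L^2} = s_{m+1}$ forces the pointwise modulus identity $|\hat h(e^{i\theta})| = s_{m+1}$ almost everywhere on $\mathbb{T}$. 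With this in hand, $\hat r_m := [\hat h]_s$ agrees with $f - \hat h$ up to an $H^\infty$ summand, so by the $H^\infty$-invariance of the Hankel norm and Nehari's theorem, $\|f - \hat r_m\|_{\Gamma} \leq \|\hat h\|_{L^\infty} = s_{m+1}$.

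The main technical obstacle I anticipate is verifying that $\hat r_m$ genuinely belongs to $\mathcal{R}_m^s$: namely, that $\hat r_m$ is strictly proper rational of McMillan degree at most $m$ with all poles in the open unit disk, and that $v_+$ has no zeros on the unit circle so that $\hat h$ is actually bounded there. Both hinge on fine structural properties of the $(m+1)$-st Schmidt vector of a compact Hankel operator. A standard route is to factor $v_+$ as an inner function times a polynomial of degree at most $m$, from which the pole structure of $\hat r_m$ can be read off by tracking cancellations in $u_-/v_+$. Edge cases where $s_{m+1}$ coincides with $s_m$ or $s_{m+2}$ are typically handled by a continuity/perturbation argument after first proving the generic non-degenerate case. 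Once these structural points are secured, the two inequalities force equality, identifying $\hat r_m$ as a minimizer and pinning the common value of the infimum and the Hankel-norm distance at $s_{m+1}$.
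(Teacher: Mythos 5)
The paper does not actually prove this theorem: it states the classical AAK result, discusses consequences, and defers to the references \cite{adamyan1971analytic} and \cite{chui1997discrete}. There is therefore no in-paper proof to compare against, so I will assess your outline on its own terms.

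Your plan follows the standard textbook proof (as in Chui--Chen or Partington), and the two halves are the right halves. The lower bound is correct as written: Kronecker's theorem plus linearity of $g \mapsto \Gamma_g$ plus the operator-norm Eckart--Young/Courant--Fischer bound give $\|\Gamma_{f-r}\| \geq s_{m+1}(\Gamma_f)$ for every $r \in \mathcal{R}_m^s$, since $\operatorname{rank}\Gamma_r \leq m$.

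The soft spot is the modulus step. Writing that the two holomorphy statements \emph{together with the normalizations and the scalar identity} $\langle f v_+, u_-\rangle = s_{m+1}$ ``forces'' $|\hat h| = s_{m+1}$ a.e.\ overstates the logic: an integral identity and $L^2$-normalizations cannot by themselves pin down a pointwise modulus. The ingredient that is actually needed is the \emph{pointwise} algebraic consequence of the Hankel conjugation symmetry (the same symmetry the paper alludes to when it notes $u^{(i)} = \operatorname{sgn}(\lambda_i) v^{(i)}$): on $\mathbb{T}$ one has $u_-(z) = c\,\bar z\,\overline{v_+(z)}$ with $|c|=1$, and multiplying $\hat h$ by $v_+$ then yields the pointwise identity
\begin{equation}
\hat h(z)\,v_+(z) \;=\; s_{m+1}\,c\,\bar z\,\overline{v_+(z)} \qquad \text{a.e.\ on } \mathbb{T},
\end{equation}
from which $|\hat h| = s_{m+1}$ follows directly wherever $v_+ \neq 0$, and a boundary-zero argument handles the rest. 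You gesture at this via ``the dual holomorphicity statement,'' but the chain as written substitutes an integral equality for the needed pointwise one. The remaining structural obstacles you flag --- that $v_+$ has at most $m$ zeros in $\mathbb{D}$ and none on $\mathbb{T}$, so $\hat r_m \in \mathcal{R}_m^s$ --- are the genuinely delicate part of the theorem; your proposed factorization of $v_+$ and the perturbation argument for degenerate singular values are the standard way to close them, but these are precisely the places where ``a standard route'' must be replaced by a careful argument to have a complete proof.
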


There are a number of results in the AAK theorem which may/should be surprising. First, $f(z) - s_{m + 1}\frac{u_-(z)}{v_+(z)}$ is a rational function. This is by no means obvious. Second, the minimum norm of the resulting error is $s_{m + 1}$. This might seem along the same lines as the Eckhart-Young theorem, but in this case, we are constraining ourselves to Hankel matrices rather than arbitrary matrices. Nevertheless, the similarity of the two results could help as a mnemonic.

\section{The Need for Several Complex Variables}\label{sec:scv}

The discussion given in the previous section creates a general guideline for the method of model reduction we wish to create. Specifically, we want to take our neural network, and reduce its model complexity (analogous to going from $\mathcal{R}^s_m$ to $\mathcal{R}^s_{m - 1}$), while keeping the distance from the original neural network minimal. Unfortunately, as the reader has perhaps noticed, the AAK theorem only fits into the context of transfer functions which are functions of a single complex variable. Here however, neural networks most often have multidimensional input. From the point of view of signal processing, this corresponds to a transfer function, where each of its inputs corresponds to a different shift, which presents an issue. The reason why scalar inputs seem to be required is because \textit{there is no direct analog to the AAK theorem in the context of several complex variables}. Two of the major reasons are given in the table below (Note $d$ refers to the (complex) dimension of the input).

\begin{center}
\begin{tabular}{ | m{17em} | m{17em} | } 
  \hline
  $d = 1$ & $d > 1$ \\ 
  \hline
  $\Gamma_f$ bounded $\iff$ \\
  $f = g + h, \quad g \in H^2, h \in L^{\infty}$ & There exists $f \in L^2$, such that $\Gamma_f$ is bounded on $H^2$, but no decomposition $f = g + h$ as shown to the left is possible. \\ 
  \hline
  $\Gamma_f$ extends to a compact operator on $H^2 \iff$ $f = g + h, \quad g \in H^2$ and $h$ is continuous & There exist no compact, nonzero Hankel operators on $H^2$.  \\ 
  \hline
\end{tabular}
\end{center}

See \cite{cotlar1994nehari}, \cite{cotlar1996two}, \cite{ahern2009compactness}, and \cite{ferguson2000characterizations} for details. The lower right box is perhaps the most disconcerting. Specifically, this means that there exist no Hankel operators with a finite number of positive singular values when the dimension $d$ is greater than one. However, there is a framework in which a multidimensional generalization of the AAK theorem is available to us, called \textit{restricted BMO}, where here, \enquote{BMO} stands for \enquote{Bounded Mean Oscillation}.

\section{Restricted BMO}\label{sec:bmor}

Because of the bizarre properties of Hankel operators in the context of several complex variables, Cotlar and Sadosky propose using an alternate route to discovering a multidimensional AAK theorem \cite{cotlar1996two}. Their point of departure is via the space of functions of bounded mean oscillation, or BMO, which consists of functions $f$ such that

\begin{equation}
    \sup\limits_{I \subseteq \mathbb{T}}\Bigl\{ \frac{1}{|I|}\int\limits_{I}\Big|f(z) - \frac{1}{|I|}\int\limits_{I}|f(\zeta)|d\zeta\Big|dz\Bigr\} < \infty
\end{equation}

Because $\Gamma_f$ is bounded in Hankel norm if and only if $f \in \text{BMO}$, it makes sense to draw this connection.

In one dimension there are many equivalent definitions of BMO, which prove useful in developing the classical AAK theorem. The problem in the several complex variables case is that these definitions are no longer equivalent and it is not obvious which definition one should work with in order to make the proper, generalized theory.

In particular, Charles Fefferman proved BMO is the dual space to $H^1$, thus yielding another definition of BMO \cite{fefferman1971characterizations}. Moreover, BMO can also be characterized by

\begin{equation}
    \mathrm{BMO} = L^{\infty} + HL^{\infty},
\end{equation}
where $H$ is the Hilbert transform

\begin{equation}
    Hf(\theta) = \lim\limits_{\varepsilon \rightarrow 0}\frac{1}{\pi}\int\limits_{\varepsilon < |\theta - \phi| < \pi} \frac{f(\phi)}{\theta - \phi}d\phi.
\end{equation}
This is simply the principal value of the convolution of $f$ with $\frac{1}{\pi t}$. Yet another definition of BMO is given by

\begin{equation}\label{eq: bmor1d}
    f \in \text{BMO} \iff \exists \phi_0, \phi_1 \in L^{\infty} \text{ s.t. } (\mathbb{I} - P)f = (\mathbb{I} - P)\phi_1, \ Pf = P\phi_0,
\end{equation}
where $P$ is the analytic projection operator.

The definition given in \eqref{eq: bmor1d} turns out to be the one which provides a way forward to a multidimensional version of the AAK theorem. Note also that this definition most easily generalizes the notion of multiple shifts in the signal processing perspective. The corresponding space however, will not be called BMO; this title is reserved for the dual space of $H^1$.






\begin{defn}
A function $f$ on $\mathbb{T}^d$ is in \textbf{restricted BMO} or \textbf{BMOr} if there exist $f_0, f_1, \dots, f_d$ such that the one-dimensional, antianalytic projections of $f$ are the antianalytic projections of $f_j \in L^{\infty}(\mathbb{T}^d)$, and the strictly analytic part of $f$ is the strict analytic part of $f_0 \in L^{\infty}(\mathbb{T}^d)$:

\begin{align}\label{eq: bmor}
    (\mathbb{I} - P_{x_j})f = (\mathbb{I} - P_{x_j})f_j \qquad j = 1, 2, \dots, d \\
    (P_{x_1}P_{x_2}\dots P_{x_d})f = f_0
\end{align}
\end{defn}

BMOr is a Banach space, with norm

\begin{equation}
    ||f||_{\mathrm{BMOr}} = \inf\Big\{\max\limits_{j}||f_j||_{\infty} \Big| \text{all such decompositions in Equation \ref{eq: bmor}} \Big\}
\end{equation}
The norm also has perhaps the more illuminating form

\begin{equation}
    ||f||_{\mathrm{BMOr}} = \max\Big\{\max\limits_{0 \leq j \leq d}\{\inf\{||f - h_{x_j}||_{\infty} \ \big| \ h_{x_j} \in H^2_{x_j} \}  \}, \inf\{||f - h_{\perp}||_{\infty} \ \big| \ h_{\perp} \in H^{2\perp}\} \Big\}.
\end{equation}


The multidimensional Nehari theorem, given below, implies bounded Hankel operators have bounded symbols under the BMOr norm. This is the paramount reason why introducing BMOr is required for model reduction.

\begin{theorem}[Nehari]\label{thm: nehari}
Let $\Gamma: H^2(\mathbb{T}^d) \cap \mathcal{P} \rightarrow H^2(\mathbb{T}^{d})^{\perp}$ be a Hankel operator (here, $\mathcal{P}$ are the trigonometric polynomials). $\Gamma$ is bounded if and only if there exist $f \in \mathrm{BMOr}$ such that $\Gamma_f \phi= (\mathbb{I} - P)(f\phi)$, for all $\phi \in H^2(\mathbb{T}^d)$, Moreover, 

\begin{equation}
    ||f||_{\mathrm{BMOr}} \leq ||\Gamma|| \leq \sqrt{d}||f||_{\mathrm{BMOr}}
\end{equation}
\end{theorem}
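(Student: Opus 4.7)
The plan is to prove the two inequalities by quite different methods. The upper bound $\|\Gamma\|\leq\sqrt{d}\,\|f\|_{\mathrm{BMOr}}$ is an algebraic computation resting on a telescoping decomposition of $\mathbb{I}-P$ into one-variable projections, whereas the lower bound $\|f\|_{\mathrm{BMOr}}\leq\|\Gamma\|$ reduces, coordinate by coordinate, to an operator-valued form of the classical one-dimensional Nehari theorem.

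For the upper bound the starting identity is
$$\mathbb{I}-P \;=\; \mathbb{I}-P_{x_1}P_{x_2}\cdots P_{x_d} \;=\; \sum_{k=1}^{d} P_{x_1}\cdots P_{x_{k-1}}(\mathbb{I}-P_{x_k}),$$
which is valid because the Szeg\H{o} projection on the polydisk factors as the product of the commuting one-variable projections, and the right-hand side telescopes. Fix $\phi\in H^2(\mathbb{T}^d)$ and a BMOr decomposition $\{f_0,f_1,\ldots,f_d\}$ of $f$. Since $\phi$ is analytic in each $x_k$, the product $(P_{x_k}f)\phi$ is $x_k$-analytic and so annihilated by $\mathbb{I}-P_{x_k}$, yielding
$$(\mathbb{I}-P_{x_k})(f\phi) \;=\; (\mathbb{I}-P_{x_k})\bigl((\mathbb{I}-P_{x_k})f\cdot\phi\bigr) \;=\; (\mathbb{I}-P_{x_k})(f_k\phi).$$
The decisive observation is that the $d$ summands in the telescoping identity land in mutually orthogonal subspaces of $L^2(\mathbb{T}^d)$: the $k$-th summand is Fourier-supported on multi-indices whose component is $\geq 0$ in each of $x_1,\ldots,x_{k-1}$ and $\leq -1$ in $x_k$, so the sign of the $x_k$-component distinguishes it from every other summand. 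Pythagoras then gives
$$\|(\mathbb{I}-P)(f\phi)\|_{L^2}^2 \;=\; \sum_{k=1}^{d}\|P_{x_1}\cdots P_{x_{k-1}}(\mathbb{I}-P_{x_k})(f_k\phi)\|_{L^2}^2 \;\leq\; \sum_{k=1}^{d}\|f_k\|_{L^\infty}^2\|\phi\|_{L^2}^2,$$
since each projection has norm one and $\|(\mathbb{I}-P_{x_k})(f_k\phi)\|_{L^2}\leq\|f_k\|_{L^\infty}\|\phi\|_{L^2}$. Taking the infimum over decompositions yields the factor $\sqrt{d}$.

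For the lower bound I would build the decomposition one coordinate at a time. Working modulo analytic functions (which do not affect $\Gamma$), we may harmlessly take $f_0=0$. For each $j$, expanding $f$ as a Fourier series in the remaining coordinates $\hat{x}_j=(x_1,\ldots,x_{j-1},x_{j+1},\ldots,x_d)$ recasts the $x_j$-directional action of $\Gamma$ as an operator-valued Hankel operator on $H^2(\mathbb{T})$, with coefficients given by multiplication operators on $L^2(\mathbb{T}^{d-1})$, whose norm is controlled by $\|\Gamma\|$. The operator-valued one-dimensional Nehari theorem (Page; Sz.-Nagy--Foia\c{s}) then produces an operator-valued $L^\infty$ lift whose $x_j$-antianalytic Fourier coefficients match those of $f$, with $L^\infty$-norm at most $\|\Gamma\|$. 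Reassembling the Fourier series in $\hat{x}_j$ gives a scalar $f_j\in L^\infty(\mathbb{T}^d)$ satisfying $(\mathbb{I}-P_{x_j})f_j=(\mathbb{I}-P_{x_j})f$ and $\|f_j\|_{L^\infty}\leq\|\Gamma\|$. Performing this construction independently for each of the $d$ coordinates assembles the required BMOr decomposition.

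The hard part is the lower bound: the operator-valued Nehari step must yield a genuine element of $L^\infty(\mathbb{T}^d)$ rather than a merely measurable family of $L^\infty(\mathbb{T})$-valued symbols parametrized by the other coordinates, which requires a careful representation argument for the operator-valued lift. This is precisely where restricted BMO is better-behaved than ordinary BMO on $\mathbb{T}^d$: the definition of BMOr asks for control in only one direction at a time, so the problem decouples into $d$ independent one-dimensional lifting problems; the analogous statement for ordinary BMO would be obstructed by the pathologies recorded in the table of Section \ref{sec:scv}.
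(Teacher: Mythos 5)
The paper does not prove Theorem \ref{thm: nehari}: it states the result and refers the reader to Cotlar and Sadosky (\cite{cotlar1994nehari}, \cite{cotlar1996two}), so there is no in-text argument to compare against, and your proposal has to be judged on its own terms.

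Your upper bound argument is correct and essentially complete. The telescoping identity $\mathbb{I}-P = \sum_{k=1}^d P_{x_1}\cdots P_{x_{k-1}}(\mathbb{I}-P_{x_k})$ is valid because the one-variable projections commute, the substitution $(\mathbb{I}-P_{x_k})(f\phi) = (\mathbb{I}-P_{x_k})(f_k\phi)$ follows from $\phi$ being $x_k$-analytic together with $(\mathbb{I}-P_{x_k})f = (\mathbb{I}-P_{x_k})f_k$, and the ranges of the $d$ summands are pairwise orthogonal by the sign of the $x_k$-frequency, so Pythagoras plus $\max_k\|f_k\|_\infty$ gives $\sqrt{d}$.

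Your lower bound is the right strategy, but you assert two things that each need an argument. First, the claim that the one-directional Hankel operator $\Gamma_j := (\mathbb{I}-P_{x_j})M_f$ on $H^2_{x_j}$ has norm at most $\|\Gamma\|$: since $P\leq P_{x_j}$, the projections $\mathbb{I}-P_{x_j}$ and $P_{x_j}-P$ are orthogonal and sum to $\mathbb{I}-P$, giving $\|(\mathbb{I}-P_{x_j})(f\psi)\|\leq\|\Gamma\psi\|$ for $\psi\in H^2(\mathbb{T}^d)$; you then pass to the larger domain $H^2_{x_j}$ by multiplying by high powers of the remaining coordinates (which commutes with $\mathbb{I}-P_{x_j}$ and preserves the $L^2$-norm) and taking a limit. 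You do not indicate this step. Second, and this is the genuine gap you flag but do not fill, the Page/Sz.-Nagy--Foia\c{s} lift lives in $L^\infty(\mathbb{T},\mathcal{B}(L^2(\mathbb{T}^{d-1})))$ and there is no a priori reason its values should be multiplication operators. The standard remedy is commutant lifting: $\Gamma_j$ intertwines $M_{z_i}$ for all $i\neq j$ on domain and range, the lifting theorem allows the lift to be chosen commuting with these as well, and since $L^\infty(\mathbb{T}^{d-1})$ acting by multiplication on $L^2(\mathbb{T}^{d-1})$ is maximal abelian (its own commutant), a lift commuting with the $M_{z_i}$ is automatically a multiplication operator, hence a scalar $f_j\in L^\infty(\mathbb{T}^d)$. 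Your phrasing of the obstacle, a ``genuine element of $L^\infty(\mathbb{T}^d)$'' versus a ``measurable family of $L^\infty(\mathbb{T})$-valued symbols parametrized by the other coordinates,'' slightly misstates what goes wrong: a uniformly bounded measurable such family \emph{is} an element of $L^\infty(\mathbb{T}^d)$; the issue is whether the operator values are multiplication operators at all, which is exactly what the commutant-lifting refinement supplies.
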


In $d=1$, we have $||f||_{\mathrm{BMOr}} = ||f||_{\mathrm{BMO}} = ||\Gamma||$. However, for $d > 1$, there exist Hankel operators with unbounded symbols under the product BMO norm. Moreover, every compact Hankel operator for $d > 1$ is identically equal to zero. This therefore suggests BMOr is the correct space with which to deal in more than one dimension. In fact using BMOr, we do get a multivariable generalization of the AAK theorem in the context of several complex variables.

\section{The Cotlar-Sadosky Theorem and its Consequences}\label{sec: cs}

\begin{defn}
The $\sigma-$\textit{numbers} of a Hankel operator $\Gamma$ with corresponding symbol $\phi$ is

\begin{multline}
    \sigma_n(\Gamma_{\phi}) := \inf\{||\Gamma_{b\phi}|| \ : \ b = b_1 \otimes \dots \otimes b_d, \\
    \text{with } b_j \text{ a one-dimensional Blaschke product with at most } n_j \text{ factors}\}
\end{multline}
\end{defn}

The $\sigma$-numbers are the BMOr analogue of \textit{Schmidt numbers} (precisely the singular values of the corresponding Hankel matrix) from the AAK theory \cite{chui1997discrete}. We are required to introduce $\sigma-$numbers precisely because of the fact that the only compact Hankel operator (one in which singular values can go to zero) is the zero Hankel operator for dimension greater than one. Note that $\sigma-$numbers correspond to singular values in one dimension.

Define the distance $\delta(\cdot, \cdot)$ by

\begin{equation}
    \delta(f, \mathrm{BMOA} + \mathcal{R}_n) = \inf\{||bf - h||_{\mathrm{BMOr}} : \quad h \in \mathrm{BMOA}, b = b_1 \otimes \dots \otimes b_n\},
\end{equation}
where $b_j$ is a one-dimensional Blaschke product with at most $n_j$ factors and $\mathcal{R}_n$ denotes the set of multivariate rational functions with all its poles in the unit polydisk. 

\begin{theorem}[Cotlar-Sadosky AAK-Theorem (CSAAK Theorem)]

\begin{equation}\label{eq: csaak}
    \frac{1}{\sqrt{d}}\sigma_n(\Gamma_f) \leq \delta(f, \mathrm{BMOA} + \mathcal{R}_n) \leq \sigma_n(\Gamma_f)
\end{equation}
\end{theorem}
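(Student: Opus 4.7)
The plan is to prove the two bounds separately, with both reductions passing through the multivariable Nehari theorem (Theorem \ref{thm: nehari}), which sandwiches the Hankel operator norm between $\|\cdot\|_{\mathrm{BMOr}}$ and $\sqrt{d}\,\|\cdot\|_{\mathrm{BMOr}}$. Two structural facts do the heavy lifting. First, multiplying $f$ by a Blaschke tensor product $b = b_1 \otimes \cdots \otimes b_d$ moves us through exactly the class of admissible symbols parametrized in the definition of $\sigma_n(\Gamma_f)$, while $|b|=1$ on $\mathbb{T}^d$ preserves relevant norms. Second, subtracting any $h \in \mathrm{BMOA}$ from a symbol leaves the Hankel operator unchanged, since $\Gamma_h = 0$ whenever $h$ is analytic; moreover, by inspection of $\Gamma_\phi \psi = (\mathbb{I}-P)(\phi\psi)$ with $\psi \equiv 1$, the kernel of the symbol-to-operator map is exactly $\mathrm{BMOA}$.

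For the upper bound $\delta(f, \mathrm{BMOA} + \mathcal{R}_n) \le \sigma_n(\Gamma_f)$, I would fix $\varepsilon > 0$ and, by definition of $\sigma_n$, choose a Blaschke tensor product $b$ of the allowed multi-index size such that $\|\Gamma_{bf}\| \le \sigma_n(\Gamma_f) + \varepsilon$. The left-hand inequality in Theorem \ref{thm: nehari} produces a BMOr symbol $\tilde f$ for $\Gamma_{bf}$ with $\|\tilde f\|_{\mathrm{BMOr}} \le \|\Gamma_{bf}\|$. Since $bf$ and $\tilde f$ yield the same Hankel operator, they differ by an element of $\mathrm{BMOA}$; that is, $\tilde f = bf - h$ for some $h \in \mathrm{BMOA}$. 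Taking the infimum over $b$ and $h$ and sending $\varepsilon \to 0$ gives the inequality.

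For the lower bound $\tfrac{1}{\sqrt d}\,\sigma_n(\Gamma_f) \le \delta(f, \mathrm{BMOA} + \mathcal{R}_n)$, I would fix any admissible $b$ and any $h \in \mathrm{BMOA}$. Since $\Gamma_h = 0$, we have $\Gamma_{bf - h} = \Gamma_{bf}$, and the right-hand inequality in Theorem \ref{thm: nehari} yields
\[
\sigma_n(\Gamma_f) \;\le\; \|\Gamma_{bf}\| \;=\; \|\Gamma_{bf-h}\| \;\le\; \sqrt{d}\,\|bf - h\|_{\mathrm{BMOr}}.
\]
Taking the infimum over admissible $b$ and over $h \in \mathrm{BMOA}$ on the right completes the argument.

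The hard part here is not the algebra but ensuring the two infima line up correctly, and in particular that the kernel of $\phi \mapsto \Gamma_\phi$ really is $\mathrm{BMOA}$ in the BMOr framework; if one were to work instead with product BMO, the Nehari step on the left collapses (see Section \ref{sec:bmor}). A subordinate care point is that the Blaschke product witnessing near-optimality of $\sigma_n$ must be of the same tensor class with the prescribed multi-index sizes appearing in $\mathcal{R}_n$; this is immediate from the matched definitions but must be tracked carefully when pairing the two infima.
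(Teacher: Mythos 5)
The paper itself gives no proof of the CSAAK theorem---it is stated and attributed to Cotlar and Sadosky---so there is no "paper's own proof" to compare against. That said, your derivation from the multivariable Nehari theorem is the natural route and is essentially correct: for the upper bound you realize a near-optimal $\|\Gamma_{bf}\|$, use the left half of the Nehari sandwich to produce a BMOr symbol $\tilde f$ of $\Gamma_{bf}$ with $\|\tilde f\|_{\mathrm{BMOr}} \le \|\Gamma_{bf}\|$, and then write $\tilde f = bf - h$ with $h \in \mathrm{BMOA}$; for the lower bound you note $\Gamma_{bf-h} = \Gamma_{bf}$ and invoke the right half of the sandwich, then take infima. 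The pairing of infima goes through because $\sigma_n$ and $\delta$ are parametrized by the same class of Blaschke tensor products $b = b_1 \otimes \cdots \otimes b_d$ with the same multi-index bounds $n_j$.

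One point worth making explicit, since you flag it yourself as the hard part: the identification of the kernel of the symbol-to-operator map with $\mathrm{BMOA}$ is what lets you rewrite $\tilde f = bf - h$. Concretely, if $\Gamma_g = 0$ then taking $\phi \equiv 1$ in $\Gamma_g\phi = (\mathbb{I}-P)(g\phi)$ gives $(\mathbb{I}-P)g = 0$, i.e.\ $g$ has Fourier support in $(\mathbb{Z}^+)^d$, and conversely any such $g$ kills the operator because $(\mathbb{I}-P)(g\phi) = 0$ for $\phi$ an analytic polynomial. Within the class of BMOr symbols, this is precisely $\mathrm{BMOA}$, so the step is sound---but it deserves a line rather than an aside. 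A second small point: in the lower-bound step you apply $\|\Gamma_\phi\| \le \sqrt{d}\,\|\phi\|_{\mathrm{BMOr}}$ to the arbitrary symbol $\phi = bf - h$, whereas the paper's statement of Nehari phrases the sandwich only for the existentially produced symbol; you should note that the right-hand inequality is the ``easy'' direction and holds for \emph{every} BMOr symbol, which is what licenses your use of it.
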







\section{The Marginal AAK Theorem}

There are two problems in particular with the CS-AAK theorem, both of which are merely practical. First, it seems that the $\sigma-$numbers of a Hankel operator cannot easily be computed. Therefore, the practitioner may not be able to explicitly find a metric which yields errors on a performed model reduction. Second, the proof of the CS-AAK theorem is not constructive. This makes sense, considering we no longer have equality, but inequality in \eqref{eq: csaak}, so there are many rational functions achieving these bounds. 

Therefore, we prove another, albeit similar, multidimensional AAK theorem in another, more computationally tractable framework.

Denote the $d-$dimensional unit polydisk by $\mathbb{D}^d$. The following is a special case of Theorem $3.2$ of \cite{cotlar1996two}.

\begin{theorem}[Pick Interpolation Theorem]\label{thm: multipick}
 Let $z^{(1)} = (z^{(1)}_1, z^{(1)}_2, \dots z^{(1)}_d)$, $z^{(2)} = (z^{(2)}_1, z^{(2)}_2, \dots z^{(2)}_d)$, $\dots, z^{(n)} = (z^{(n)}_1, z^{(n)}_2, \dots z^{(n)}_d) \in \mathbb{D}^d$ and $\lambda_1, \lambda_2, \dots, \lambda_n \in \mathbb{C}$.
    




Then there exist functions $F_1, \dots, F_d$, bounded on $\mathbb{D}^d$, analytic in $z_1, \dots, z_d$, respectively, such that 

\begin{equation}
    F_k(z^{(j)}) = \lambda_j^{\frac{1}{d}}
\end{equation}
for all $j=1, \dots, n$, with $||F_k||_{\infty} \leq 1$ whenever the collection of matrices $\{K_k\}$, called \textbf{Pick matrices}, are nonnegative semidefinite, where

\begin{equation}\label{eq: cs_psd}
    [K_k]_{pq} = \frac{1 - \lambda_p^{\frac{1}{d}} \overline{\lambda_q^{\frac{1}{d}}}}{1 - z^{(p)}_k\overline{z^{(q)}_k}}
\end{equation}








\end{theorem}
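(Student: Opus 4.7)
The plan is to reduce the multivariate interpolation problem to $d$ independent applications of the classical single-variable Nevanlinna--Pick theorem. For each fixed $k \in \{1, \ldots, d\}$, I will construct $F_k$ as a function depending only on the $k$-th coordinate, which is automatically bounded on $\mathbb{D}^d$ once it is bounded on $\mathbb{D}$ and automatically analytic in $z_k$. Thus, setting $F_k(z_1, \ldots, z_d) := f_k(z_k)$ for some suitable $f_k : \mathbb{D} \to \overline{\mathbb{D}}$ reduces the problem to producing $f_k$ with $f_k(z^{(j)}_k) = \lambda_j^{1/d}$ and $\|f_k\|_\infty \leq 1$.

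The first step is to check that the one-dimensional interpolation data one extracts from the $k$-th coordinates is consistent. It can happen that $z^{(p)}_k = z^{(q)}_k$ for $p \neq q$ (since only the full tuples $z^{(p)}$ are assumed distinct), and in that case we need $\lambda_p^{1/d} = \lambda_q^{1/d}$. To see that the hypothesis $K_k \succeq 0$ forces this, examine the $2 \times 2$ principal submatrix of $K_k$ indexed by $\{p, q\}$: its determinant simplifies, after a short expansion of $|1 - \lambda_p^{1/d}\overline{\lambda_q^{1/d}}|^2$, to
\begin{equation}
\frac{-|\lambda_p^{1/d} - \lambda_q^{1/d}|^2}{(1 - |z^{(p)}_k|^2)^2},
\end{equation}
which is nonnegative only when $\lambda_p^{1/d} = \lambda_q^{1/d}$.

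Given this consistency, let $w_{k,1}, \ldots, w_{k,m_k}$ denote the distinct values appearing among $\{z^{(j)}_k\}_{j=1}^n$, and define $\mu_{k,l}$ as the common value $\lambda_j^{1/d}$ over any $j$ with $z^{(j)}_k = w_{k,l}$. The reduced Pick matrix with entries $(1 - \mu_{k,l}\overline{\mu_{k,m}})/(1 - w_{k,l}\overline{w_{k,m}})$ is a principal submatrix of $K_k$ obtained by selecting one representative per equivalence class, hence is again nonnegative semidefinite. The classical one-dimensional Nevanlinna--Pick theorem then yields an analytic $f_k : \mathbb{D} \to \overline{\mathbb{D}}$ with $f_k(w_{k,l}) = \mu_{k,l}$ and $\|f_k\|_\infty \leq 1$. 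Lifting via $F_k(z_1, \ldots, z_d) := f_k(z_k)$ produces the required functions: analyticity in $z_k$, sup norm at most one on $\mathbb{D}^d$, and the interpolation identities $F_k(z^{(j)}) = f_k(z^{(j)}_k) = \lambda_j^{1/d}$ all hold by construction.

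The only real obstacle is the consistency computation in the first step; once that is in hand, everything else is either the classical single-variable theorem or the trivial lift from $\mathbb{D}$ to $\mathbb{D}^d$. One minor point worth flagging is that the statement implicitly requires a coherent choice of $d$-th roots $\lambda_j^{1/d}$, since the Pick matrices $K_k$ depend on that choice; the hypothesis must be interpreted as asserting the existence of such a choice making every $K_k$ nonnegative semidefinite, and that same choice is what the constructed $F_k$ will then interpolate.
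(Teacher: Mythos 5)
Your proof follows the same coordinate-by-coordinate reduction to the classical one-variable Nevanlinna--Pick theorem that the paper uses: for each $k$, treat $\{(z^{(j)}_k, \lambda_j^{1/d})\}$ as one-dimensional interpolation data, invoke the classical theorem under the hypothesis $K_k \succeq 0$, and lift the resulting $f_k$ to $\mathbb{D}^d$ by $F_k(z)=f_k(z_k)$. You are somewhat more careful than the paper's brief argument in two respects — you verify via the $2\times 2$ principal minor that $K_k \succeq 0$ forces consistency when $z^{(p)}_k = z^{(q)}_k$ for $p\neq q$, and you flag that the hypothesis presupposes a coherent choice of $d$-th roots $\lambda_j^{1/d}$ — but the underlying idea is identical.
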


\begin{proof}
For $j = 1, 2, \dots, d$, performing one-dimensional Pick interpolation (see, e.g. \cite{garnett2007bounded}) with data

\begin{equation*}
    \Big\{(z_j^{(1)}, \lambda_1^{\frac{1}{d}}), (z_j^{(2)}, \lambda_2^{\frac{1}{d}}), \dots, (z_j^{(n)}, \lambda_n^{\frac{1}{d}})\Big\}
\end{equation*}
yields functions $F_j$ with $L^{\infty}$ norms less than one. The product $\prod_{j=1}^{d}F_j$ therefore lies in the unit ball of $L^{\infty}$.
\end{proof}

This theorem is particularly useful because we can explicitly calculate the $F_j$'s from Pick's algorithm to obtain the interpolatory product $\prod\limits_{j=1}^{N}F_j$ with $L^{\infty}$ norm less than one.

In the context of Pick interpolation, almost paradoxically, one does not lose value in an interpolation by assuming a product structure. See \cite{cotlar1996two} for details. The intuition is that Pick interpolation only assumes a result whose $L^{\infty}$ is less than one, rather than an interpolant of minimal $L^{\infty}$ norm. This subtle difference allows us enough room to assume a product structure and prove the following, marginal AAK theorem.


\begin{theorem}[Marginal AAK]\label{thm: werneburg}
Let $\Phi(z_1, \dots, z_d) = \prod\limits_{j=1}^{d}\phi_j(z_j)$ and let $b = b_1 \ \otimes \ b_2 \ \otimes \ \dots \ \otimes \ b_d$, where $b_j$ denotes a one-dimensional Blaschke product having at most $n_j$ factors. Then

\begin{equation}
    \delta(\Phi, BMOA + \mathcal{R}_n) \leq \sqrt{d} \prod\limits_{j=1}^{d}s_{n_j},
\end{equation}
where $s_{n_j}$ denotes the $n_j^{th}$ Schmidt number corresponding to the one-dimensional Hankel operator $\Gamma_{\phi_j}$.
\end{theorem}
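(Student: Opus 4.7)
The plan is to apply the classical one-dimensional AAK theorem to each $\phi_j$ individually, combine these through a tensor-product Blaschke symbol, and control the remainder in BMOr by invoking Nehari's multi-dimensional theorem (Theorem \ref{thm: nehari}) in the form $\|f\|_{\mathrm{BMOr}} \leq \|\Gamma_f\|$.

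First, I would apply the one-dimensional AAK theorem to each $\phi_j$ to produce a Blaschke product $b_j$ with at most $n_j$ factors such that, by Nehari's theorem, there exists $g_j \in H^\infty(\mathbb{T})$ with $\|b_j\phi_j - g_j\|_{L^\infty(\mathbb{T})} = s_{n_j}$. Setting $e_j := b_j\phi_j - g_j$, we have $\|e_j\|_\infty = s_{n_j}$ and $\|\Gamma_{e_j}\| = s_{n_j}$ in 1D. With $b = b_1 \otimes \cdots \otimes b_d$ already at hand, take $h := g_1 g_2 \cdots g_d$. Since each $g_j$ depends only on $z_j$ and is analytic in $z_j$, the product $h$ lies in $H^\infty(\mathbb{T}^d) \subseteq \mathrm{BMOA}$, and this will be the proposed BMOA approximant.

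Next, I would exploit the product structure. Expanding,
\begin{equation}
b\Phi - h = \prod_{j=1}^d (g_j + e_j) - \prod_{j=1}^d g_j = \sum_{k=1}^d \Bigl[\prod_{j<k} b_j \phi_j\Bigr] \cdot e_k \cdot \Bigl[\prod_{j>k} g_j\Bigr]
\end{equation}
via telescoping. Since $h \in H^\infty$, the associated Hankel operator satisfies $\Gamma_{b\Phi-h} = \Gamma_{b\Phi}$, and on separable vectors $\psi = \psi_1 \otimes \cdots \otimes \psi_d$ one can decompose $\Gamma_{b\Phi}\psi$ as a sum, over nonempty $S \subseteq \{1,\dots,d\}$, of tensor products $\bigotimes_{j \in S} \Gamma_{b_j\phi_j}\psi_j \otimes \bigotimes_{j \notin S} P_{z_j}(b_j\phi_j\psi_j)$. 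The Fourier supports of these summands live in distinct "octants" of $\mathbb{Z}^d \setminus \mathbb{Z}_{\geq 0}^d$ and are therefore mutually orthogonal in $L^2(\mathbb{T}^d)$, allowing a Pythagorean sum-of-squares bound on $\|\Gamma_{b\Phi}\psi\|^2$ in terms of the 1D Hankel norms $\|\Gamma_{b_j\phi_j}\| = s_{n_j}$.

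Finally, I would apply the upper half of Theorem \ref{thm: nehari} to conclude
\begin{equation}
\|b\Phi - h\|_{\mathrm{BMOr}} \leq \|\Gamma_{b\Phi}\|,
\end{equation}
and then combine the per-subset contributions from the tensor decomposition using a Cauchy-Schwarz inequality across the $d$ one-dimensional "directions," which is the precise mechanism producing the factor $\sqrt{d}$. Taking the infimum in the definition of $\delta$ gives $\delta(\Phi, \mathrm{BMOA} + \mathcal{R}_n) \leq \sqrt{d}\prod_j s_{n_j}$, as required. The main obstacle, and the technical heart of the argument, is to control the Toeplitz-type analytic factors $P_{z_j}(b_j\phi_j\psi_j)$ appearing alongside the Schmidt factors in the tensor decomposition: a naive bound by $\|\phi_j\|_\infty$ introduces parasitic factors that spoil the estimate, so one must carefully exploit the specific geometry of the AAK Schmidt subspaces and the orthogonality of the Fourier octants to ensure only the product $\prod_j s_{n_j}$, modulated by the Cauchy-Schwarz $\sqrt{d}$, survives.
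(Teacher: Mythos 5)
You take a genuinely different route from the paper, and as you yourself flag, the route has an unresolved gap at its hardest step.

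The paper's proof is a short chain of abstract inequalities: it invokes the Cotlar--Sadosky AAK theorem to get $\delta(\Phi,\mathrm{BMOA}+\mathcal{R}_n)\leq\sigma_n(\Gamma_\Phi)$, unwinds the definition $\sigma_n(\Gamma_\Phi)=\inf_b\|\Gamma_{b\Phi}\|$, passes to $\mathrm{BMOr}$ via the second Nehari inequality $\|\Gamma_{b\Phi}\|\leq\sqrt{d}\,\|b\Phi\|_{\mathrm{BMOr}}$, dominates $\mathrm{BMOr}$ by $L^\infty$, and then factors into one-dimensional pieces handled by the classical AAK theorem (Remark 2 of \cite{cotlar1994nehari}). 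You do not use the Cotlar--Sadosky AAK inequality at all; instead you construct an explicit BMOA approximant $h=\prod_j g_j$, pass through $\|b\Phi-h\|_{\mathrm{BMOr}}\leq\|\Gamma_{b\Phi-h}\|=\|\Gamma_{b\Phi}\|$ (note: this is the \emph{first}, not the second, Nehari inequality, contrary to your labelling), and then propose to bound $\|\Gamma_{b\Phi}\|$ by direct operator analysis. This lands you on exactly the quantity the paper is careful \emph{not} to estimate by hand.

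The gap is precisely where you locate it, and it is not a technical nuisance but an actual obstruction. Your octant decomposition
\begin{equation*}
\Gamma_{b\Phi}\psi=\sum_{\emptyset\neq S\subseteq\{1,\dots,d\}}\ \bigotimes_{j\in S}\Gamma_{b_j\phi_j}\psi_j\ \otimes\ \bigotimes_{j\notin S}T_{b_j\phi_j}\psi_j
\end{equation*}
is correct and the summands are mutually orthogonal in $L^2(\mathbb{T}^d)$; but the Pythagorean estimate this yields is
\begin{equation*}
\|\Gamma_{b\Phi}\psi\|^2\leq\Bigl(\prod_j\|\psi_j\|^2\Bigr)\sum_{\emptyset\neq S}\ \prod_{j\in S}s_{n_j}^2\ \prod_{j\notin S}\|T_{b_j\phi_j}\|^2,
\end{equation*}
and since $|b_j|\equiv 1$ on $\mathbb{T}$ the Toeplitz norms are $\|T_{b_j\phi_j}\|=\|\phi_j\|_\infty$, which are in general far larger than $s_{n_j}$. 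Worse, this is not slack in the estimate: taking $\psi_1$ near a top singular vector of $\Gamma_{b_1\phi_1}$ and $\psi_j$ ($j\geq 2$) near-maximizing $T_{b_j\phi_j}$, the single summand $S=\{1\}$ already gives a \emph{lower} bound $\|\Gamma_{b\Phi}\|\gtrsim s_{n_1}\prod_{j\geq 2}\|\phi_j\|_\infty$, so the operator norm is genuinely not $O(\prod_j s_{n_j})$. No appeal to ``the geometry of the AAK Schmidt subspaces'' can remove the Toeplitz contributions, because they sit inside the operator itself. The telescoping expansion of $b\Phi-h$ has the same defect: each term has $L^\infty$ norm of order $s_{n_k}\prod_{j\neq k}\|\phi_j\|_\infty$, so the parasitic $\|\phi_j\|_\infty$ factors survive regardless of how you group or apply Cauchy--Schwarz. (Note also that the Cauchy--Schwarz you invoke to produce $\sqrt d$ would need to act on $d$ orthogonal terms, but your orthogonal decomposition has $2^d-1$ terms; the $\sqrt d$ in the theorem comes from the Nehari constant, not from your decomposition.) To make progress you would need to follow the paper and enter through the $\sigma$-numbers and the $\sqrt{d}$-side of Nehari, which is the step your plan skips.
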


\begin{proof}
We have 

\begin{multline}
    \delta(\Phi, BMOA + \mathcal{R}_n) \leq \sigma_n(\Gamma_{\Phi}) = \inf \{||\Gamma_{b\Phi}||\} \leq \sqrt{d} \inf \{||b\Phi||_{\text{BMOr}}\} \\
    \leq \sqrt{d} \inf \{||b\Phi||_{L^{\infty}}\} \leq \sqrt{d} \inf \Big\{ \prod\limits_{j=1}^{d} ||b_j\phi_j||_{L^{\infty}}\Big\} \leq \sqrt{d} \prod\limits_{j=1}^{d}s_{n_j},
\end{multline}
 where the second inequality follows from Nehari's Theorem in $\mathbb{T}^d$ \cite{cotlar1996two} and the ultimate one follows from the one-dimensional AAK theorem (see Remark $2$ in \cite{cotlar1994nehari}).
\end{proof}

The inequality in Theorem \ref{thm: werneburg} is valuable, because it shows that, by trading off the optimality included in using Theorem \ref{thm: interpolation} and the $\omega-$kernel, for the product structure of $\Phi$ above, one has access to a dimension-by-dimension AAK model reduction. 

\begin{corollary}\label{cor: prolong}
Let $\Phi(z_1, \dots, z_d) = \prod\limits_{j=1}^{d}\phi_j(z_j)$ and let $b = b_1 \ \otimes \ b_2 \ \otimes \ \dots \ \otimes \ b_d$, where $b_j$ denotes a one-dimensional Blaschke product having at most $n_j$ factors. Let $\Psi$ a the minimum norm interpolant of $\Phi$ obtained by interpolating $\Phi$ at arbitrary points $\{x_j\}$ in an RKHS which contractively contains $H^2(\mathbb{D}^d)$ with norm $||\cdot||$. Then

\begin{equation}
    ||\Psi|| \leq \prod\limits_{j=1}^{d} s_{n_j},
\end{equation}
where $s_{n_j}$ denotes the $n_j^{th}$ Schmidt number corresponding to the one-dimensional Hankel operator $\Gamma_{\phi_j}$.
\end{corollary}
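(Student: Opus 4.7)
The plan is to exhibit an explicit tensor-product interpolant $G$ of $\Phi$ sitting inside $H^2(\mathbb{D}^d)$ whose $H^2$ norm is already bounded by $\prod_j s_{n_j}$. Then, because the RKHS contractively contains $H^2(\mathbb{D}^d)$ and $\Psi$ is the minimum-norm interpolant in the RKHS (Theorem \ref{thm: interpolation}), the bound on $\|G\|_{H^2(\mathbb{D}^d)}$ will immediately transfer: $\|\Psi\| \le \|G\|_{\mathrm{RKHS}} \le \|G\|_{H^2(\mathbb{D}^d)} \le \prod_j s_{n_j}$.

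The first step is to build the factors of $G$. For each coordinate $j$, I would apply the one-dimensional AAK theorem to $\phi_j$, in precisely the Hankel-norm reformulation already used in the proof of Theorem \ref{thm: werneburg}. This supplies a Blaschke product $b_j$ of degree at most $n_j$ with $\|b_j\phi_j\|_\Gamma \le s_{n_j}$. Combined with the Nehari corollary $\|\cdot\|_{L^2}\le\|\cdot\|_\Gamma$ and a one-variable Nevanlinna--Pick adjustment to enforce exact interpolation at the one-dimensional projections $\{x_k^{(j)}\}_k$ of the nodes onto the $j$-th coordinate, this yields a function $G_j\colon\mathbb{D}\to\mathbb{C}$ with $G_j(x_k^{(j)}) = \phi_j(x_k^{(j)})$ for every $k$, and with $\|G_j\|_{H^2(\mathbb{D})} \le s_{n_j}$.

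The second step exploits the product structure of $\Phi$. Set $G(z_1,\dots,z_d) := \prod_{j=1}^{d} G_j(z_j)$. Because $\Phi$ itself factorizes as $\prod_j \phi_j(z_j)$, we obtain at every node
\begin{equation*}
G(x_k) \;=\; \prod_{j=1}^d G_j(x_k^{(j)}) \;=\; \prod_{j=1}^d \phi_j(x_k^{(j)}) \;=\; \Phi(x_k),
\end{equation*}
so $G$ interpolates $\Phi$ on the given node set. Moreover, because the $H^2(\mathbb{D}^d)$ inner product factors over tensor products (Fubini on $\mathbb{T}^d$), we have $\|G\|_{H^2(\mathbb{D}^d)} = \prod_j \|G_j\|_{H^2(\mathbb{D})} \le \prod_j s_{n_j}$. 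Contractive containment then gives $\|G\|_{\mathrm{RKHS}} \le \|G\|_{H^2(\mathbb{D}^d)}$, and the minimum-norm property of $\Psi$ closes the argument.

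The hard part is the per-factor construction in the first step: the AAK theorem natively produces a rational \emph{approximant} to $\phi_j$ with Hankel error $s_{n_j}$, not an \emph{interpolant} with $H^2$ norm $s_{n_j}$. Bridging these two viewpoints requires showing that the one-dimensional Nevanlinna--Pick matrix built from the data $\bigl(x_k^{(j)},\phi_j(x_k^{(j)})\bigr)_k$ with threshold $s_{n_j}$ is positive semidefinite; this PSD condition must be extracted from the AAK bound on $b_j\phi_j$, and carrying this out rigorously — without picking up extra constants or losing a factor of $\sqrt{d}$ — is the delicate technical hurdle.
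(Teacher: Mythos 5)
Your route differs substantially from the paper's and contains a gap that you yourself flag. The paper's proof is disarmingly short: $\Phi$ itself interpolates its own values at the nodes $\{x_j\}$, so the minimum-norm property of $\Psi$ from Theorem \ref{thm: interpolation} gives $\|\Psi\| \le \|\Phi\|_{\text{RKHS}}$ directly. Contractive containment and the standard Hardy inequality supply $\|\Phi\|_{\text{RKHS}} \le \|\Phi\|_{H^2(\mathbb{D}^d)} \le \|\Phi\|_{L^\infty(\mathbb{T}^d)}$, the product structure gives $\|\Phi\|_{L^\infty} \le \prod_j \|\phi_j\|_{L^\infty}$, and the one-dimensional AAK bound on each factor closes the estimate. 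No auxiliary interpolant is ever constructed.

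By contrast, you build a fresh tensor-product comparison interpolant $G = \prod_j G_j$ and demand that each $G_j$ simultaneously interpolate $\phi_j$ at the coordinate projections of the nodes and have $H^2(\mathbb{D})$ norm at most $s_{n_j}$. That second requirement is the gap: the Pick matrix at threshold $s_{n_j}$ built from the data $\bigl(x_k^{(j)},\phi_j(x_k^{(j)})\bigr)_k$ need not be positive semidefinite, because for arbitrary nodes the interpolation constraints alone can force a larger minimum norm than $s_{n_j}$. Observe that $\phi_j$ itself is a legitimate choice of $G_j$ exactly when $\|\phi_j\|_{L^\infty}\le s_{n_j}$ --- the very bound the paper extracts from AAK --- and in that case your $G$ degenerates to $\Phi$. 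The \enquote{delicate technical hurdle} you isolate at the end is therefore the entire content of the argument, and it is both simpler and cleaner to apply the AAK bound directly to $\Phi$ than to wrap it in a Pick interpolation whose feasibility is precisely what is at issue.
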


\begin{proof}
In a RKHS with norm $||\cdot||$ which contractively contains $H^2(\mathbb{D}^d)$, we have $||\cdot|| \leq ||\cdot||_{H^2(\mathbb{D}^d)} \leq ||\cdot||_{L^{\infty}(\mathbb{T}^d)}$.
\end{proof}

Note that the RKHS induced by the $\omega-$kernel contractively contains $H^2(\mathbb{D}^d)$. Indeed if $f(z) = \sum\limits_{I \in (\mathbb{Z}^+)^d}a_Iz^I$ is in the Hardy space on the polydisk $H^2(\mathbb{D}^d)$, where $I = (j_1, j_2, \dots, j_d) \in (\mathbb{Z}^+)^d$ is a multi-index, then \cite{paulsen2016introduction}

\begin{equation}
    ||f||_{H^2(\mathbb{D}^d)}^2 = \sum\limits_{I \in (\mathbb{Z}^+)^d}|a_I|^2
\end{equation}

Now, from the power series expansion

\begin{equation}
    \frac{1}{1 - w^*z} = \sum\limits_{j=0}^{\infty}(w^*z)^j = \sum\limits_{I \in (\mathbb{Z}^+)^d}\frac{j!}{j_1!j_2!\dots j_d!}(w_1^*z_1)^{j_1}(w_2^*z_2)^{j_2}\dots(w_d^*z_d)^{j_d},
\end{equation}
we have

\begin{equation}
    ||f||_{\omega^d}^2 = \sum\limits_{I \in (\mathbb{Z}^+)^d}\frac{|a_I|^2}{\Big(\frac{j!}{j_1!j_2!\dots j_d!} \Big)},
\end{equation}

where $||\cdot||_{\omega^d}$ denotes the norm induced by the $\omega-$kernel \cite{alpay2002schur}. Therefore, 

\begin{equation}
    ||\Psi||_{\omega^d} \leq ||\Phi||_{\omega^d} \leq ||\Phi||_{H^2(\mathbb{D}^d)} \leq ||\Phi||_{L^{\infty}} \leq \prod\limits_{j=1}^{d}||\phi||_{L^{\infty}} \leq \prod\limits_{j=1}^{d}s_{n_j}.
\end{equation}

Also note the possibility that performing Nevanlinna-Pick in dimension-by-dimension fashion may fail, because one or more of the Pick matrices in Theorem \ref{thm: multipick} may not be nonnegative semidefinite (note this is quite different from the nonnegative semidefiniteness one always gets by performing RKHS interpolation). In this case, one can still choose a marginal way of performing interpolation which results in a function in BMOr (e.g. any RKHS contractively contained in $H^2(\mathbb{D}^d)$). Indeed, it is quite often the case that the Pick matrices given in Theorem \ref{thm: multipick} are not all nonnegative semidefinite, so such methods may be more applicable in practice.

As an example of how marginal AAK works, consider the example of learning the multivariate polynomial $f(x, y) = xy^2$ from $100$ noisy data points (Figure \ref{fig: mAAK}).

We gave our neural networks $100$ samples drawn from the surface given by $f$, adding Gaussian noise drawn from $N(0, 0.1)$. Interpolation via the Segal-Bargmann kernel, gives results which generalize quite poorly. Recall that Segal-Bargmann interpolation does not generalize well in the presence of many interpolation nodes, due to Jensen's theorem. On the other hand, the power series expansion of the exponential function allows one to perform marginal AAK, which results in a neural network which generalizes far better.

\begin{figure}[H]
    \centering
    \includegraphics[scale=.4]{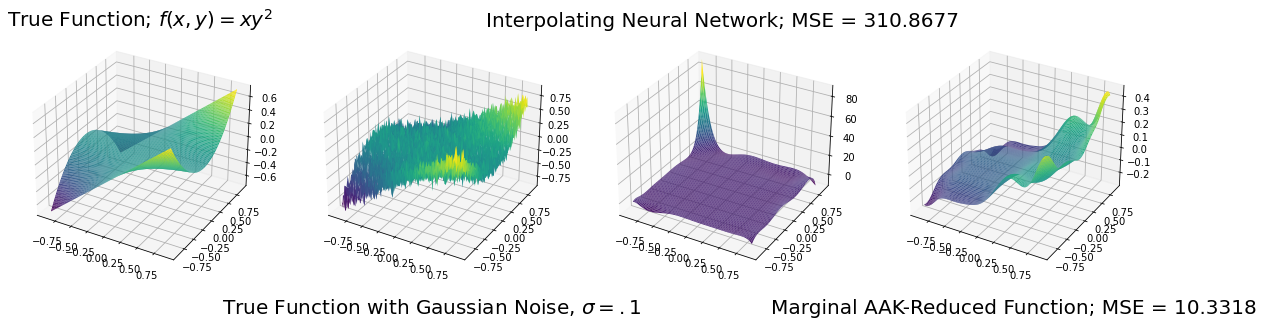}
    \caption{Learning a noisy surface using RKHS interpolation and a marginal AAK-reduced interpolant.}
    \label{fig: mAAK}
\end{figure}

The AAK model reduction outlined above works particularly well in the presence of noise. The reasons for this are twofold. First, interpolation works notoriously poorly on noisy data. Model reduction in this case removes the interpolation requirement of our approximating neural network, thereby allowing for better generalization. Second, if it turns out that $\delta(\Phi, BMOA + \mathcal{R}_n)$ is not particularly small, model reduction may still be valuable if one does not expect to achieve accuracy on the order of $O(\delta(\Phi, BMOA + \mathcal{R}_n))$ regardless. The reduction of noise in such a case is still of value.

\subsection{Prolongation Neural Networks}

We emphasize that the model-reduced function $f$ given by the marginal AAK theorem is generally not a neural network. Regardless, Corollary \ref{cor: prolong} gives an explicit method of constructing a neural network which approximates $f$ to arbitrary accuracy. This is because one can now \textit{sample} from $f$ to gain training data, a privilege rarely available in the context of training neural networks. From here the practitioner has the freedom to perform a variety of different methods to construct a neural network, e.g. Remez exchange or cross validation. We call a neural network which results from such a training method a \textbf{Prolongation Neural Network}, or \textbf{PNN}. We call $f$ a \textbf{prolongation}.

\begin{figure}[H]
    \centering
    \includegraphics[scale=.4]{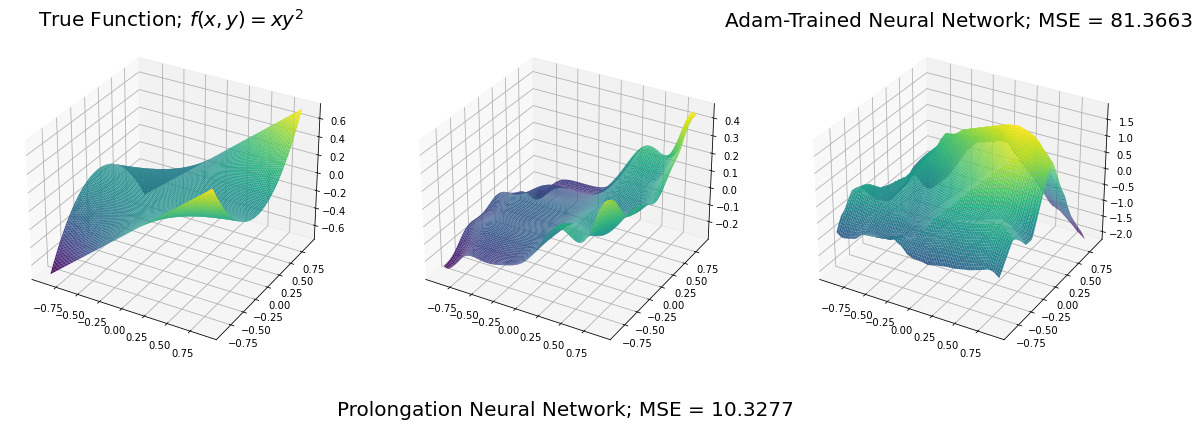}
    \caption{Learning a noisy surface using a PNN, in comparison with Adam.}
    \label{fig: mAAKnn}
\end{figure}

Figure \ref{fig: mAAKnn} is an illustrative example. The PNN depicted uses the prolongation given in Figure \ref{fig: mAAK} and $625$ training points given by a $(25 \times 25)$ grid of Chebyshev nodes. We use the activation induced by the Segal-Bargmann kernel. The rightmost surface is a $2-$layer neural network trained via Adam, with width $100$ (in correspondence with the fact that $100$ noisy training points were seen by the prolongation). The neural network was trained for $1,000$ epochs, using an initial learning rate of $5 \times 10^{-5}$, a weight decay of $10^{-3}$, and ReLU activation. We used mean square error as our loss function. The results suggest that the PNN is superior to Adam, as the mean square error is about $8$ times lower.

It should be remarked that the process of artificially generating data from which one can train a neural network is similar to the use of Generative Adversarial Networks (GAN) \cite{goodfellow2014generative}, except the \textit{generative} portion of a PNN is not a neural network and does not iteratively get trained.

\subsection{Fornasini-Marchesini Model Reduction}

\begin{theorem}[Fornasini-Marchesini \cite{gleason}]\label{thm: gleason}
If $F: \mathbb{C}^d \rightarrow \mathbb{C}^{p \times q}$ is  rational in each of its variables and analytic in a neighborhood of the origin, then $F$ can be written 

\begin{equation}\label{eq: gleason}
    F(z_1, z_2, \dots, z_d) = D + C\Bigg(\mathbb{I}_n - \sum\limits_{j=1}^{d}z_jA_j\Bigg)^{-1}\Bigg(\sum\limits_{k=1}^{d}z_kB_k\Bigg),
\end{equation}
where $D = F(0)$, $C \in \mathbb{C}^{p \times n}$, $A_1, A_2, \dots, A_d \in \mathbb{C}^{n \times n}$, and $B_1, B_2, \dots, B_d \in \mathbb{C}^{n \times q}$.
\end{theorem}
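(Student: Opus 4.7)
The strategy is to build an explicit finite-dimensional state-space realization, adapting the classical realization theory of rational transfer functions in one variable to the multivariable setting. Normalize first by setting $D := F(0)$; then $G(z) := F(z) - D$ is rational, analytic at the origin, and satisfies $G(0) = 0$. Because $F$ is jointly rational, we may write $F = N/d$, where $N$ is a polynomial matrix and $d$ is a scalar polynomial normalized so that $d(0) = 1$. This yields the functional equation
\[
F(z) \;=\; N(z) \;+\; (1 - d(z))\, F(z),
\]
in which $1 - d$ vanishes at the origin, so that $1 - d(z) = \sum_{j=1}^{d} z_j p_j(z)$ and $N(z) - D = \sum_{k=1}^{d} z_k q_k(z)$ for polynomial matrices $p_j$ and $q_k$.

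The plan is then to exploit this recursion to produce a finite-dimensional space $\mathcal{X} = \mathbb{C}^n$ with linear maps $A_1,\dots,A_d : \mathcal{X} \to \mathcal{X}$ that implement multiplication by $z_j$, together with column maps $B_k$ carrying the initial contribution of $q_k$ and a row map $C$ acting as evaluation at zero. Concretely, $\mathcal{X}$ will be the span of a finite family of matrix-valued polynomials divided by $d$, indexed by the monomials encountered when iterating the recursion. With these matrices in hand, unrolling the Neumann expansion
\[
\Bigl(\mathbb{I}_n - \sum_{j=1}^{d} z_j A_j\Bigr)^{-1} \;=\; \sum_{k=0}^{\infty} \Bigl(\sum_{j=1}^{d} z_j A_j\Bigr)^{k}
\]
produces the Taylor coefficients of $G$ term by term, matching each multi-indexed coefficient of $F - D$ with the corresponding sum $\sum_{\text{words}} C A_{j_1}\cdots A_{j_{k-1}} B_{j_k}$ prescribed by the recursion.

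The central obstacle is the finite-dimensionality of $\mathcal{X}$: naively, iterating shifts by $z_1,\dots,z_d$ against $1/d$ produces an infinite family of rational functions. The point is that the algebraic relation $d(z)\cdot \bigl(1/d(z)\bigr) = 1$ supplies a Cayley--Hamilton-style descent which caps the numerator degrees appearing in the shift orbit by a bound depending only on the degrees of $N$ and $d$. Once this uniform degree bound is in place, the dimension of $\mathcal{X}$ is controlled, and the matrices $A_j, B_k, C$ can be written down explicitly from the recursion; verifying the Fornasini--Marchesini identity then reduces to a routine coefficient-matching calculation against the expanded geometric series.
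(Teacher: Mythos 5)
The paper itself offers no argument for this theorem: the stated ``proof'' is a pointer to \cite{gleason}, so there is nothing in the manuscript to compare your proposal against line by line. Taken on its own, your plan is the standard realization-theoretic route to the Fornasini--Marchesini form: normalize by $D = F(0)$, clear denominators to $F = N/d$ with $d(0) = 1$, iterate a Gleason-type splitting $h(z) - h(0) = \sum_j z_j h_j(z)$ inside the rational functions with denominator $d$, take the resulting linear maps as $A_j$, $B_k$, $C$, and match multi-indexed Taylor coefficients against the Neumann series. That outline is correct and, carried out carefully, does yield the theorem.

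Two steps deserve sharper wording than you give them. First, the hypothesis asserts rationality ``in each of its variables,'' which a priori is only \emph{separate} rationality; your passage to a single $N/d$ presumes joint rationality. This implication does hold for a function analytic near the origin (a classical theorem going back to Hurwitz), but it is a theorem, not a reformulation, and should be invoked explicitly. Second, the finite-dimensionality of the state space is not genuinely a Cayley--Hamilton phenomenon; the actual mechanism is a degree count. If $h = P/d$, then $h - h(0) = (P - P(0)\,d)/d$, so re-centering at the origin raises the numerator degree only to $\max(\deg P, \deg d)$, and the subsequent split $P - P(0)d = \sum_j z_j P_j$ drops it by one. Hence the numerator degrees in the shift orbit never exceed $\max(\deg N, \deg d) - 1$, and the state space embeds in the finite-dimensional space of rational functions $P/d$ of that bounded degree (tensored with $\mathbb{C}^p$ in the matrix-valued case). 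You must also fix a deterministic rule for the non-unique splitting $P = \sum_j z_j P_j$ --- for instance, always factor out the smallest-index variable dividing each monomial --- so that each $A_j$ is a single well-defined linear map; with such a choice fixed, the coefficient matching against the expanded resolvent closes the argument as you describe.
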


\begin{proof}
See \cite{gleason} for a short and accessible proof.
\end{proof}

Given a function $F$ of the form found in Theorem \ref{thm: gleason}, finding an $A, B, C, D$ such that \eqref{eq: gleason} holds is known as the \textit{Fornasini-Marchesini} problem. Such a tuple $(A, B, C, D)$ is a \textit{Fornasini-Marchesini realization}, or \textit{F-M realization}. While there are several methods of finding F-M realizations, there has been no known way of constructing a \textit{minimal realization} (i.e. one with no pole-zero cancellation) \cite{fm}, \cite{fmeig}. 

By Theorem \ref{thm: gleason}, we know that functions with F-M realizations are subsets of $\mathcal{H}_{\omega}$, the RKHS induced by the $\omega-$kernel, on some ball with appropriate radius. To see this, simply note that if $f$ has an F-M realization, then there must exist some ball about zero on which $f$ is bounded (in fact, boundedness is more strict than is necessary, but will do here). Therefore, given some F-M realization $f$, one can perform interpolation in $\mathcal{H}_{\omega}$ (or some scaled version of it) on as many interpolation nodes as is necessary to gain some desired approximation. Finally, performing marginal AAK to the interpolatory function, one can use the Schmidt numbers to reduce the order of the model until it is of minimal order.

While such a function does not explicitly yield its F-M realization, there does exist one by Theorem \ref{thm: gleason}. Moreover, because the lack of pole-zero cancellation is desirable for numerical purposes, the procedure outlined above may be useful in the field of multidimensional systems.

\chapter{Applications}\label{ch: apps}

In \cite{werneburgmachine}, the authors use our RKKS interpolatory training methods to construct neural networks which predict the outcomes of different surgical methods. Specifically, the neural networks outperform professional urologists in predicting various improvements in urinary incontinence resulting from different surgeries. See Figure \ref{fig: urology} for some results from the paper. Here, we give a few more applications of our training methods which may be of independent interest.

\begin{figure}[H]
    \centering
    \includegraphics[scale=.3]{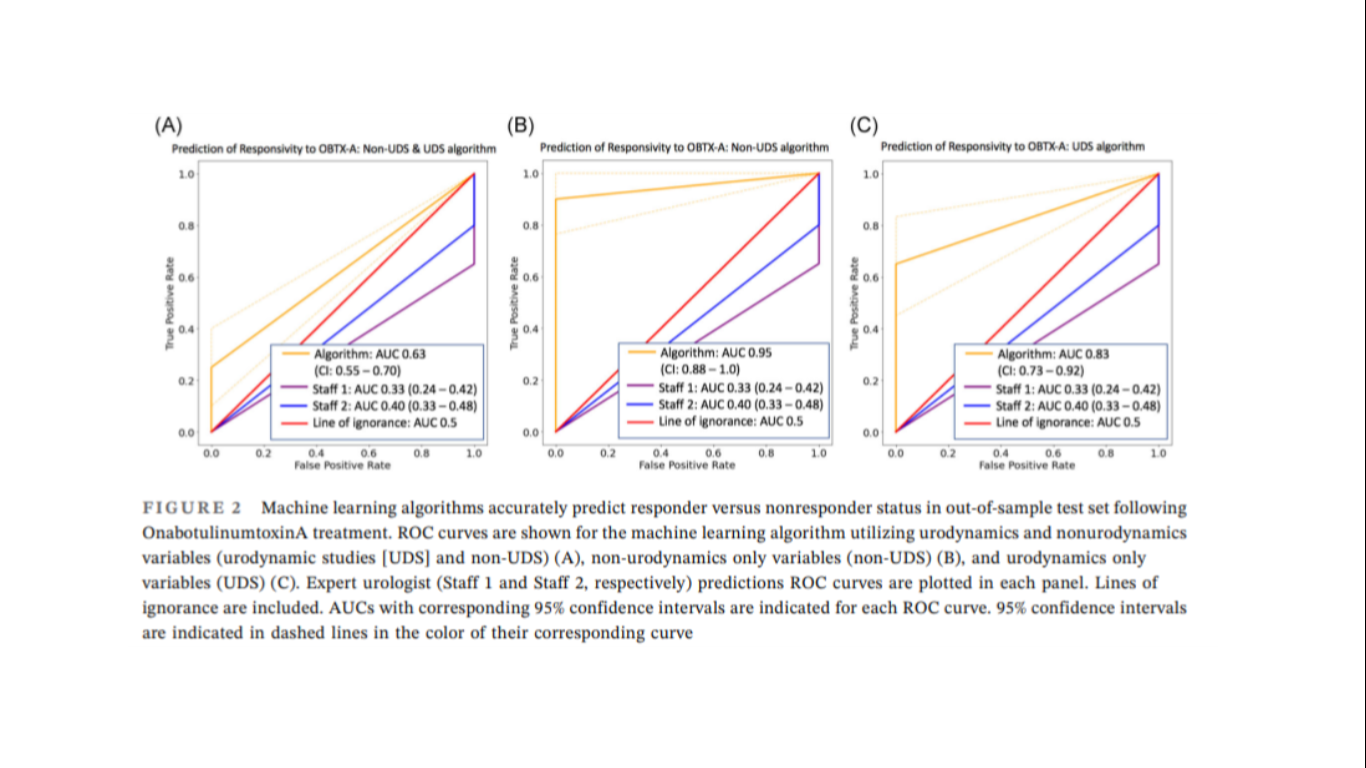}
    \caption{Taken from \cite{werneburgmachine}}.
    \label{fig: urology}
\end{figure}

\section{Calculating the Permanent of a Matrix}
The permanent of a matrix $A$ is defined by the Leibniz formula: 

\begin{equation}
    \textrm{perm}(A) = \sum\limits_{\sigma \in S_n}\prod\limits_{j=1}^n a_{j, \sigma(j)},
\end{equation}
and only differs from the determinant by $\textrm{sgn}(\sigma)$ in every term. However, it is believed to be more difficult to compute than the determinant, as it is $\mathbf{\#P}-$complete, whereas the determinant is in $\mathbf{P}$. Note $\mathbf{\#P} = \mathbf{P}$ is a stronger statement than $\mathbf{NP} = \mathbf{P}$.

We used our interpolation algorithm on $3 \times 3$ matrices. Results are shown in Figure \ref{fig: permanent}.

\begin{figure}[H]
    \centering
    \includegraphics[scale=.3]{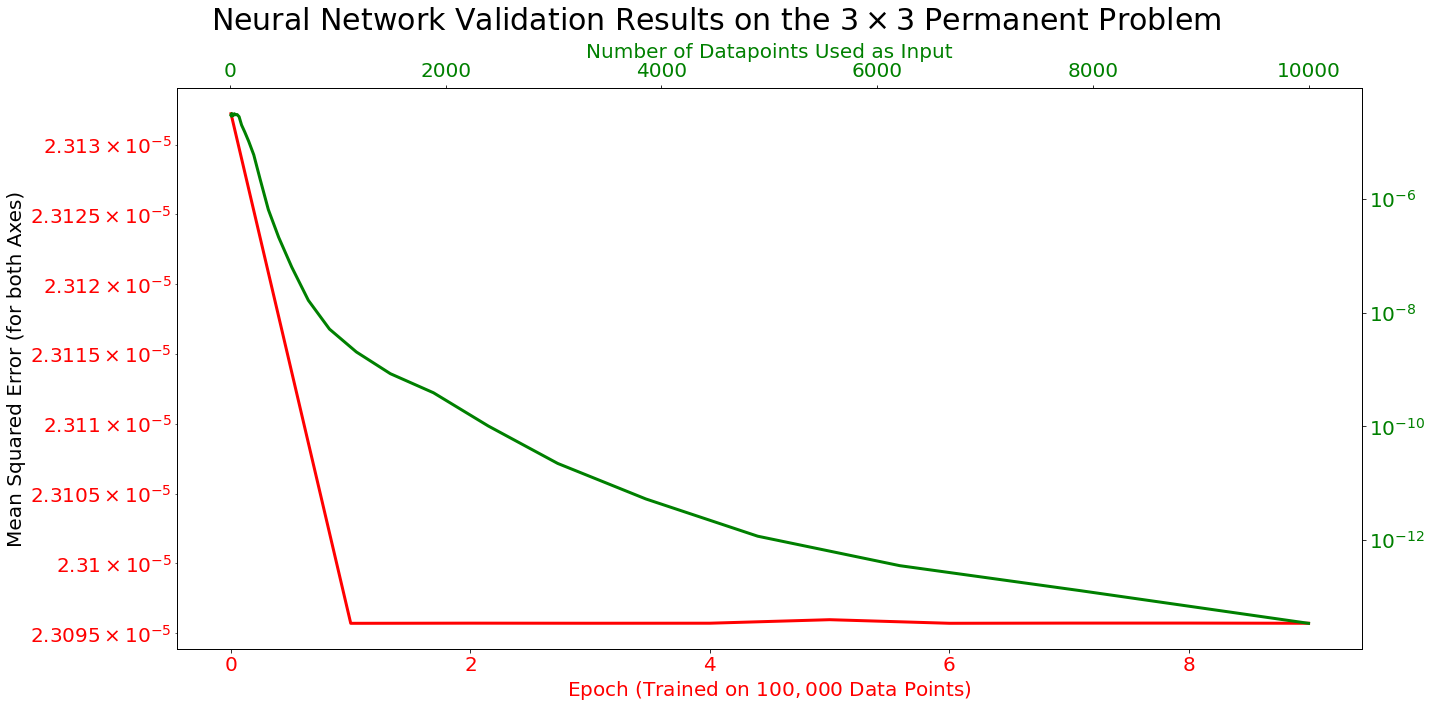}
    \caption{Error on calculating the permanent of a $3 \times 3$ matrix using the conventional method (Adam) and our interpolation method ($\omega-$kernel).}
    \label{fig: permanent}
\end{figure}



We generate matrices from the Gaussian Unitary Ensemble (GUE), and scale them so that our activation function induced by the $\omega-$kernel does not blow up. Note that this scaling is not a restriction for this problem because it is trivial to compute the permanent of any scalar multiple of a matrix whose permanent is already known. We then train two neural networks on the same dataset: one via interpolation using the $\omega-$kernel and one via Adam and the piecewise linear activation ReLU (using activation $f(z) = \frac{1}{1 - z}$ and training via Adam resulted in far inferior accuracy). Recall Adam is a sophisticated form of gradient descent which uses a memory of the first and second moments of its past gradient steps. For Adam, we use a depth $2$, width $3000$ neural network, an initial learning rate $10^{-4}$ with adaptive step size, a weight decay parameter of value $10^{-3}$, and batches of size $64$. We use mean squared error as our loss function.



Figure \ref{fig: permanent} suggests that our interpolation method is superior to Adam on the permanent problem. Adam, even with its adaptive step-size converges far too early and is about eight orders of magnitude less accurate than the interpolation network here.


It is interesting that, even though calculating the determinant of a matrix is in $\mathbf{P}$, the results given by training our neural network are very similar, as shown in Figure \ref{fig: determinant}. This is because, from the pure, neural network standpoint, one is attempting to learn a cubic polynomial in nine variables. Regardless of the coefficients of these polynomials, the tasks are of similar difficulty.

\begin{figure}[H]
    \centering
    \includegraphics[scale=.3]{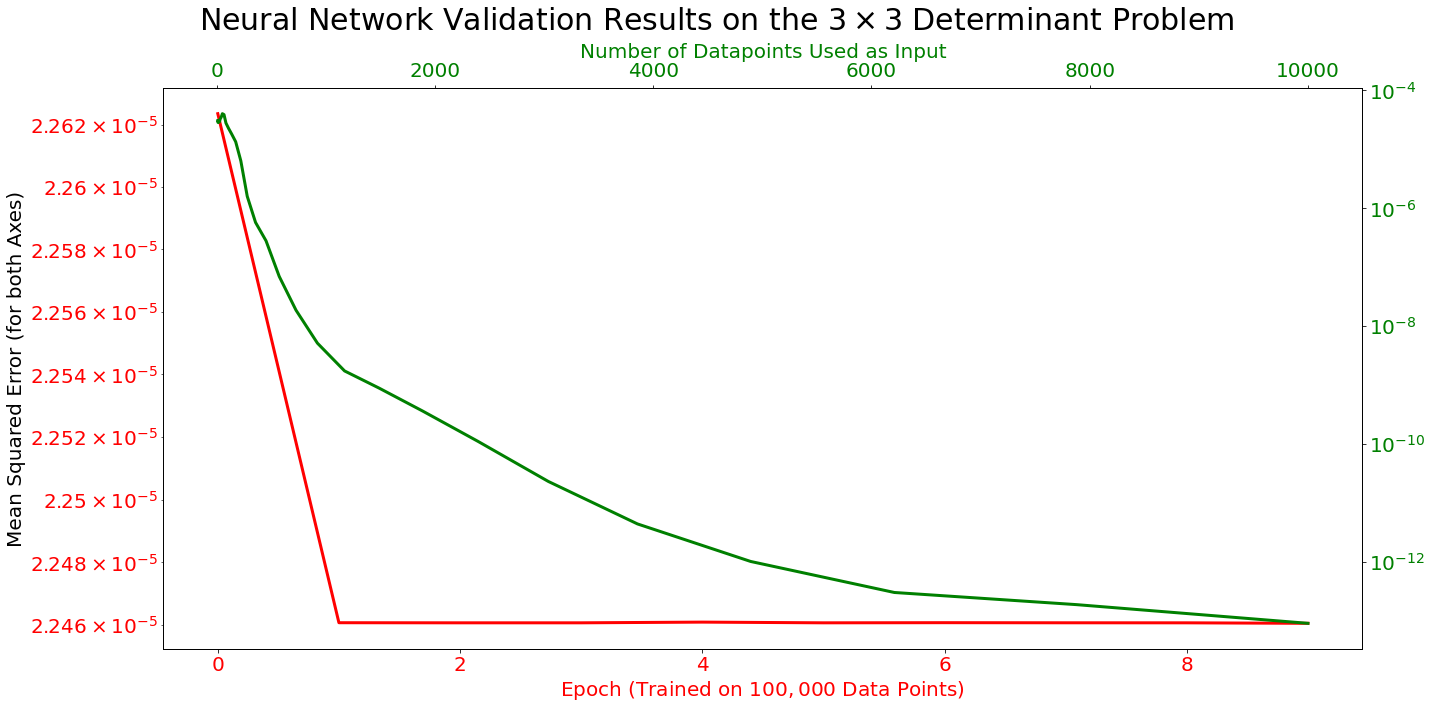}
    \caption{Error on calculating the determinant of a $3 \times 3$ matrix using the conventional method (Adam) and our interpolation method ($\omega-$kernel).}
    \label{fig: determinant}
\end{figure}



The reason that Adam is suffering so much here is because there is a large discrepancy between forward and backward error on these problems. In particular, calculating the permanents of matrices generated from the GUE comes with no guarantee of being a well-conditioned problem. Given that the determinant of a matrix depends on its eigenvalues, and calculating the permanent is at least as hard as calculating the determinant, Adam should have a great deal of trouble on these problems. The results shown confirm this.

\section{Complete Elliptic Integrals of the Second Kind}
In \cite{hemati2019ellipse}, a neural network was trained to calculate complete elliptic integrals of the second kind up to a mean square error (divided by $2$) of $0.000769$. Here, we train a $2-$layer neural network with activation $f(z) = \frac{1}{1 - z}$ on $30$ Chebyshev nodes, corresponding to the eccentricities of complete elliptic integrals. The corresponding mean square error we receive is $8.83 \times 10^{-12}$, or about $5$ more digits of accuracy. The results of given in Figures \ref{fig: ellipticintegral} and \ref{fig: ellipticintegral_error}.

\begin{figure}[H]
\begin{subfigure}{.5\textwidth}
  \centering
  \includegraphics[width=.8\linewidth]{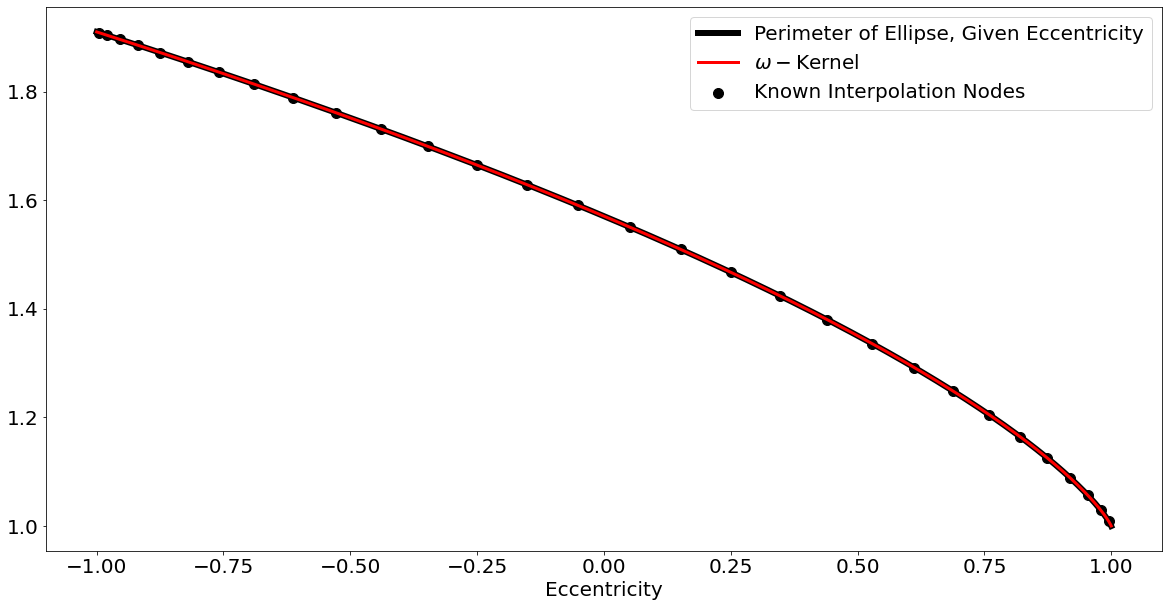}
  \caption{Neural Network Outputs and True Integral Values}
  \label{fig: ellipticintegral}
\end{subfigure}%
\begin{subfigure}{.5\textwidth}
  \centering
  \includegraphics[width=.8\linewidth]{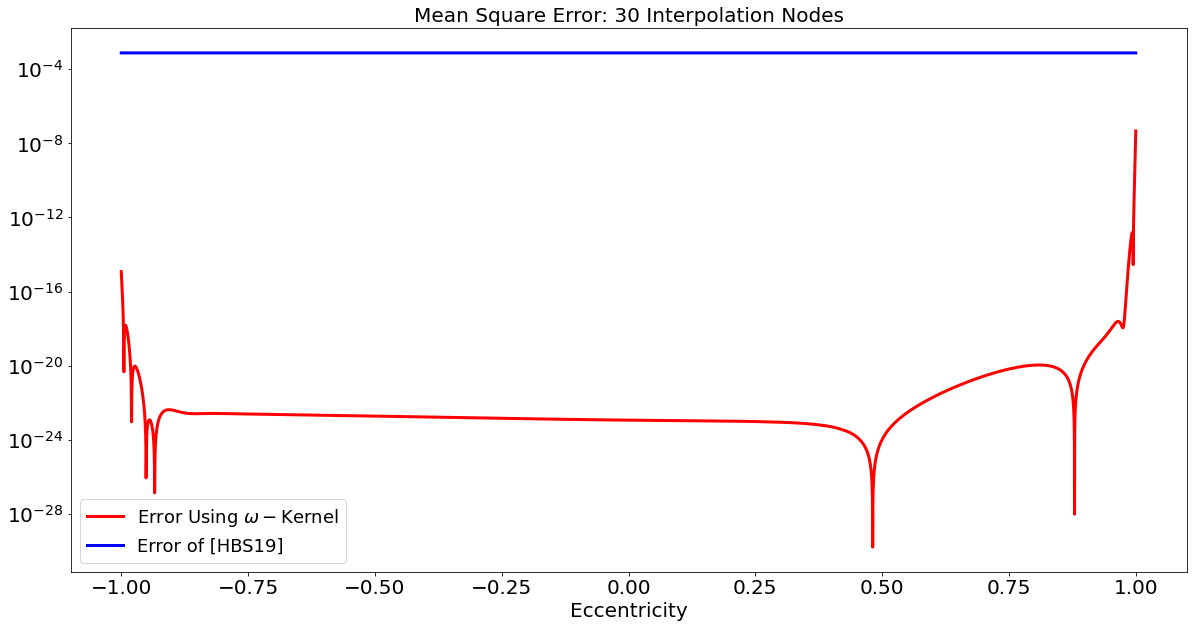}
  \caption{Mean Square Error}
  \label{fig: ellipticintegral_error}
\end{subfigure}
\caption{Our neural network obtains a mean square error of $8.83 \times 10^{-12}$, although its error is typically much lower (about $10^{-22}$).}
\label{fig: elliptic}
\end{figure}

\section{Learning Shrinkage}

We show that it is possible for a neural network how to learn shrinkage. In fact, for large samples, we find that it is possible to surpass the James-Stein estimator in terms of accuracy. 

Given one sample from a multivariate normal distribution, we train a neural network to calculate the mean.  We do so by first generating a random walk of length $N$ in $d$ dimensions, where the $j^{th}$ step is taken from a $d-$dimensional Gaussian distribution, with standard deviation $1.05^j$. These samples will be the \textit{means} of the normal distributions from which we shall sample one observation (this distribution will have standard deviation $1$). In the end, we will have $N$, $d-$dimensional samples, each coming from a distribution with an increasing mean on average. The first $N_t$ will be our training data and the last $N_v$ will be our validation data ($N = N_t + N_v$). Note the validation data comes from normal distributions with potentially much higher means than the training data. This removes the possibility of training a neural network to learn a Bayesian prior, rather than shrinkage. Moreover, because Brownian motion in dimension greater than $2$ is transient, any Bayesian prior learned from the training data will hurt out-of-sample performance.

Results for $N_t = 5, N_v = 43$ are shown in Figure \ref{fig:shrinkage_nn_walk}. Here, for each $d$ along the $x-$axis, we ran $10$ different experiments and averaged them to gain a picture of the mean performance of our training. Note that the variance of our estimator is smaller than that of both maximum likelihood and the James-Stein estimator.

\begin{figure}[H]
    \centering
    \includegraphics[scale=.3]{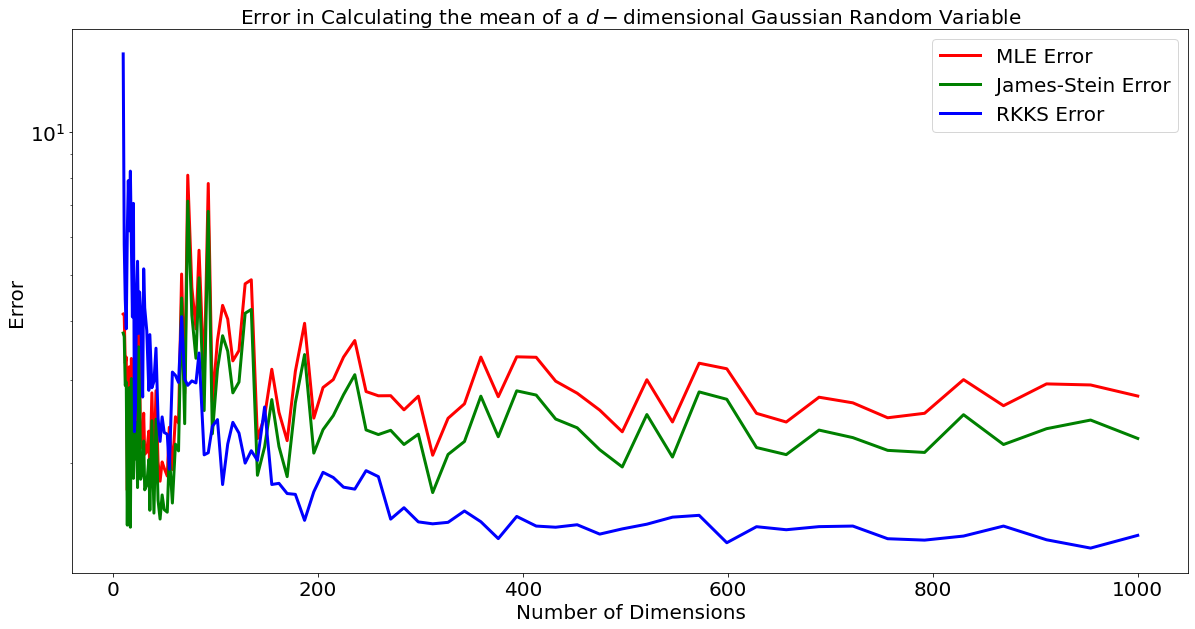}
    \caption{Relative, absolute error for mean estimation. Here, $N_t = 5$ and $N_v = 43$. Note $1.05^{48} > 10$, so the variance of the last step taken is greater than $100$. The plot shown here is the average of $10$ different experiments.}
    \label{fig:shrinkage_nn_walk}
\end{figure}

\endgroup

\begingroup\SingleSpacing
\printbibliography

@article{onepixelattacks,
   title={One Pixel Attack for Fooling Deep Neural Networks},
   volume={23},
   ISSN={1941-0026},
   url={http://dx.doi.org/10.1109/TEVC.2019.2890858},
   DOI={10.1109/tevc.2019.2890858},
   number={5},
   journal={IEEE Transactions on Evolutionary Computation},
   publisher={Institute of Electrical and Electronics Engineers (IEEE)},
   author={Su, Jiawei and Vargas, Danilo Vasconcellos and Sakurai, Kouichi},
   year={2019},
   month={Oct},
   pages={828–841}
}

@article{ljung,
  title={Analysis of recursive stochastic algorithms},
  author={Ljung, Lennart},
  journal={IEEE transactions on automatic control},
  volume={22},
  number={4},
  pages={551--575},
  year={1977},
  publisher={IEEE}
}

@article{adam,
  title={Adam: A method for stochastic optimization},
  author={Kingma, Diederik P and Ba, Jimmy},
  journal={arXiv preprint arXiv:1412.6980},
  year={2014}
}

@article{hettinger2017forward,
  title={Forward thinking: Building and training neural networks one layer at a time},
  author={Hettinger, Chris and Christensen, Tanner and Ehlert, Ben and Humpherys, Jeffrey and Jarvis, Tyler and Wade, Sean},
  journal={arXiv preprint arXiv:1706.02480},
  year={2017}
}

@article{malozemov2018examples,
  title={Examples of the Best Piecewise Linear Approximation with Free Nodes},
  author={Malozemov, Vasilii N and Tamasyan, G Sh},
  journal={Vestnik St. Petersburg University, Mathematics},
  volume={51},
  number={4},
  pages={380--385},
  year={2018},
  publisher={Springer}
}

@book{paulsen2016introduction,
  title={An introduction to the theory of reproducing kernel Hilbert spaces},
  author={Paulsen, Vern I and Raghupathi, Mrinal},
  volume={152},
  year={2016},
  publisher={Cambridge University Press}
}

@book{wall2018analytic,
  title={Analytic theory of continued fractions},
  author={Wall, Hubert Stanley},
  year={2018},
  publisher={Courier Dover Publications}
}

@article{perekrestenko2018universal,
  title={The universal approximation power of finite-width deep ReLU networks},
  author={Perekrestenko, Dmytro and Grohs, Philipp and Elbr{\"a}chter, Dennis and B{\"o}lcskei, Helmut},
  journal={arXiv preprint arXiv:1806.01528},
  year={2018}
}

@article{cybenko1989approximation,
  title={Approximation by superpositions of a sigmoidal function},
  author={Cybenko, George},
  journal={Mathematics of control, signals and systems},
  volume={2},
  number={4},
  pages={303--314},
  year={1989},
  publisher={Springer}
}

@article{gu2001generalized,
  title={Generalized inverse matrix Pad{\'e} approximation on the basis of scalar products},
  author={Gu, Chuanqing},
  journal={Linear algebra and its applications},
  volume={322},
  number={1-3},
  pages={141--167},
  year={2001},
  publisher={Elsevier}
}

@misc{strubell2019energy,
      title={Energy and Policy Considerations for Deep Learning in NLP}, 
      author={Emma Strubell and Ananya Ganesh and Andrew McCallum},
      year={2019},
      eprint={1906.02243},
      archivePrefix={arXiv},
      primaryClass={cs.CL}
}

@article{cotlar1996two,
  title={Two distinguished subspaces of product BMO and Nehari-AAK theory for Hankel operators on the torus},
  author={Cotlar, Mischa and Sadosky, Cora},
  journal={Integral Equations and Operator Theory},
  volume={26},
  number={3},
  pages={273--304},
  year={1996},
  publisher={Springer}
}

@article{alpay1991some,
  title={Some remarks on reproducing kernel Krein spaces},
  author={Alpay, Daniel},
  journal={The Rocky Mountain Journal of Mathematics},
  pages={1189--1205},
  year={1991},
  publisher={JSTOR}
}

@article{schwartz1964sous,
  title={Sous-espaces hilbertiens d’espaces vectoriels topologiques et noyaux associ{\'e}s (noyaux reproduisants)},
  author={Schwartz, Laurent},
  journal={Journal d’analyse math{\'e}matique},
  volume={13},
  number={1},
  pages={115--256},
  year={1964},
  publisher={Springer}
}

@book{chui1997discrete,
  title={Discrete H-infinity Optimization: With Applications in Signal Processing and Control Systems},
  author={Chui, Charles K and Chen, Guanrong},
  volume={26},
  year={1997},
  publisher={Springer Science \& Business Media}
}

@article{ahern2009compactness,
  title={Compactness of Hankel operators on Hardy--Sobolev spaces of the polydisk},
  author={Ahern, Patrick and Youssfi, El Hassan and Zhu, Kehe},
  journal={Journal of Operator Theory},
  pages={301--312},
  year={2009},
  publisher={JSTOR}
}

@article{cotlar1994nehari,
  title={Nehari and Nevanlinna-Pick problems and holomorphic extensions in the polydisk in terms of restricted BMO},
  author={Cotlar, Mischa and Sadosky, Cora},
  journal={Journal of functional analysis},
  volume={124},
  number={1},
  pages={205--210},
  year={1994},
  publisher={Elsevier}
}

@article{ferguson2000characterizations,
  title={Characterizations of bounded mean oscillation on the polydisk in terms of Hankel operators and Carleson measures},
  author={Ferguson, Sarah H and Sadosky, Cora},
  journal={Journal d’Analyse Math{\'e}matique},
  volume={81},
  number={1},
  pages={239--267},
  year={2000},
  publisher={Springer}
}

@article{agler2000complete,
  title={Complete Nevanlinna--Pick kernels},
  author={Agler, Jim and McCarthy, John E},
  journal={Journal of Functional Analysis},
  volume={175},
  number={1},
  pages={111--124},
  year={2000},
  publisher={Elsevier}
}

@article{rmsprop,
  title={Neural networks for machine learning lecture 6a overview of mini-batch gradient descent},
  author={Hinton, Geoffrey and Srivastava, Nitish and Swersky, Kevin},
  journal={Cited on},
  volume={14},
  number={8},
  pages={2},
  year={2012}
}

@article{adagrad,
  title={Adaptive subgradient methods for online learning and stochastic optimization.},
  author={Duchi, John and Hazan, Elad and Singer, Yoram},
  journal={Journal of machine learning research},
  volume={12},
  number={7},
  year={2011}
}

@article{amari1998natural,
  title={Natural gradient works efficiently in learning},
  author={Amari, Shun-Ichi},
  journal={Neural computation},
  volume={10},
  number={2},
  pages={251--276},
  year={1998},
  publisher={MIT Press}
}

@article{gpt3,
  title={Language models are few-shot learners},
  author={Brown, Tom B and Mann, Benjamin and Ryder, Nick and Subbiah, Melanie and Kaplan, Jared and Dhariwal, Prafulla and Neelakantan, Arvind and Shyam, Pranav and Sastry, Girish and Askell, Amanda and others},
  journal={arXiv preprint arXiv:2005.14165},
  year={2020}
}

@article{vgg16,
  title={Very deep convolutional networks for large-scale image recognition},
  author={Simonyan, Karen and Zisserman, Andrew},
  journal={arXiv preprint arXiv:1409.1556},
  year={2014}
}

@book{trefethen1997numerical,
  title={Numerical linear algebra},
  author={Trefethen, Lloyd N and Bau III, David},
  volume={50},
  year={1997},
  publisher={Siam}
}

@article{hassibi1996h,
  title={H/sup/spl infin//optimality of the LMS algorithm},
  author={Hassibi, Babak and Sayed, Ali H and Kailath, Thomas},
  journal={IEEE Transactions on Signal Processing},
  volume={44},
  number={2},
  pages={267--280},
  year={1996},
  publisher={IEEE}
}

@article{gpt2,
  title={Language models are unsupervised multitask learners},
  author={Radford, Alec and Wu, Jeffrey and Child, Rewon and Luan, David and Amodei, Dario and Sutskever, Ilya and others},
  journal={OpenAI blog},
  volume={1},
  number={8},
  pages={9},
  year={2019}
}

@article{taojensen,
  title={Zeroes, Poles, and Factorisation of Meromorphic Functions},
  author={Tao, Terence},
  journal={URL https://terrytao.wordpress.com/tag/jensens-formula/},
  year={2020}
}

@techreport{widrow1960adaptive,
  title={Adaptive switching circuits},
  author={Widrow, Bernard and Hoff, Marcian E},
  year={1960},
  institution={Stanford Univ Ca Stanford Electronics Labs}
}

@article{hajek2019ece,
  title={ECE 543: Statistical Learning Theory},
  author={Hajek, Bruce and Raginsky, Maxim},
  journal={University of Illinois lecture notes},
  year={2019}
}

@inproceedings{zhang2018bpgrad,
  title={Bpgrad: Towards global optimality in deep learning via branch and pruning},
  author={Zhang, Ziming and Wu, Yuanwei and Wang, Guanghui},
  booktitle={Proceedings of the IEEE Conference on Computer Vision and Pattern Recognition},
  pages={3301--3309},
  year={2018}
}

@article{alpay2002schur,
  title={The Schur algorithm and reproducing kernel Hilbert spaces in the ball},
  author={Alpay, Daniel and Bolotnikov, Vladimir and Kaptano{\u{g}}lu, H Turgay},
  journal={Linear algebra and its applications},
  volume={342},
  number={1-3},
  pages={163--186},
  year={2002},
  publisher={Elsevier}
}

@article{agler1990nevanlinna,
  title={Nevanlinna-Pick interpolation on Sobolev space},
  author={Agler, Jim},
  journal={Proceedings of the American Mathematical Society},
  volume={108},
  number={2},
  pages={341--351},
  year={1990}
}

@article{werneburgmachine,
  title={Machine learning provides an accurate prognostication model for refractory overactive bladder treatment response and is noninferior to human experts},
  author={Werneburg, Glenn T and Werneburg, Eric A and Goldman, Howard B and Mullhaupt, Andrew P and Vasavada, Sandip P},
  journal={Neurourology and Urodynamics},
  publisher={Wiley Online Library}
}

@article{fm,
  title={A novel constructive procedure to low-order Fornasini--Marchesini model realization},
  author={Yan, Shi and Zhao, Dongdong and Wang, Hai and Matsushita, Shinya and Xu, Li},
  journal={Journal of the Franklin Institute},
  volume={357},
  number={3},
  pages={1764--1789},
  year={2020},
  publisher={Elsevier}
}

@article{gleason,
  title={A realization theorem for rational functions of several complex variables},
  author={Alpay, Daniel and Dubi, Chen},
  journal={Systems \& control letters},
  volume={49},
  number={3},
  pages={225--229},
  year={2003},
  publisher={Elsevier}
}

@article{fmeig,
  title={Derivation and reduction of the singular Fornasini--Marchesini state-space model for a class of multidimensional systems},
  author={Zhao, Dongdong and Galkowski, Krzysztof and Sulikowski, Bartlomiej and Xu, Li},
  journal={IET Control Theory \& Applications},
  volume={14},
  number={4},
  pages={634--645},
  year={2020},
  publisher={The Institution of Engineering and Technology}
}

@article{fefferman1971characterizations,
  title={Characterizations of bounded mean oscillation},
  author={Fefferman, Charles},
  journal={Bulletin of the American Mathematical Society},
  volume={77},
  number={4},
  pages={587--588},
  year={1971},
  publisher={American Mathematical Society}
}

@article{adamyan1971analytic,
  title={Analytic properties of Schmidt pairs for a Hankel operator and the generalized Schur--Takagi problem},
  author={Adamyan, Vadim Movsesovich and Arov, Damir Zyamovich and Krein, Mark Grigor'evich},
  journal={Matematicheskii Sbornik},
  volume={128},
  number={1},
  pages={34--75},
  year={1971},
  publisher={Russian Academy of Sciences, Steklov Mathematical Institute of Russian~…}
}

@article{goodfellow2014generative,
  title={Generative adversarial nets},
  author={Goodfellow, Ian and Pouget-Abadie, Jean and Mirza, Mehdi and Xu, Bing and Warde-Farley, David and Ozair, Sherjil and Courville, Aaron and Bengio, Yoshua},
  journal={Advances in neural information processing systems},
  volume={27},
  year={2014}
}

@article{hemati2019ellipse,
  title={ellipse perimeter estimation using nonparametric regression of rbf neural network based on elliptic integral of the second type},
  author={Hemati, Sobhan and Beiranvand, Peyman and Sharafi, Mehdi},
  journal={Investigaci{\'o}n Operacional},
  volume={39},
  number={4},
  pages={639--646},
  year={2019}
}

@misc{firenets,
  doi = {10.48550/ARXIV.2101.08286},
  
  url = {https://arxiv.org/abs/2101.08286},
  
  author = {Colbrook, Matthew J. and Antun, Vegard and Hansen, Anders C.},
  
  keywords = {Machine Learning (cs.LG), Computer Vision and Pattern Recognition (cs.CV), Numerical Analysis (math.NA), Neural and Evolutionary Computing (cs.NE), FOS: Computer and information sciences, FOS: Computer and information sciences, FOS: Mathematics, FOS: Mathematics},
  
  title = {Can stable and accurate neural networks be computed? -- On the barriers of deep learning and Smale's 18th problem},
  
  publisher = {arXiv},
  
  year = {2021},
  
  copyright = {Creative Commons Attribution Non Commercial No Derivatives 4.0 International}
}

@inproceedings{oglic2018learning,
  title={Learning in reproducing kernel Kre{\i}n spaces},
  author={Oglic, Dino and G{\"a}rtner, Thomas},
  booktitle={International conference on machine learning},
  pages={3859--3867},
  year={2018},
  organization={PMLR}
}

@book{garnett2007bounded,
  title={Bounded analytic functions},
  author={Garnett, John},
  volume={236},
  year={2007},
  publisher={Springer Science \& Business Media}
}
\endgroup

\end{document}